\newtheorem{theorem}{Theorem}[section]
\newtheorem{lemma}[theorem]{Lemma}
\newtheorem{corollary}[theorem]{Corollary}
\newtheorem{proposition}[theorem]{Proposition}
\newenvironment{customthm}[1]
  {\innercustomthm}
  {\endinnercustomthm}
\newenvironment{customcor}[1]
  {\innercustomcor}
  {\endinnercustomcor}
\theoremstyle{definition}
\newtheorem{definition}[theorem]{Definition}
\newtheorem{example}[theorem]{Example}
\theoremstyle{remark}
\newtheorem{remark}[theorem]{Remark}
\numberwithin{equation}{section}
\newcommand{\e}{\mathbf{e}}
\newcommand{\mc}{\mathcal}
\newcommand{\mb}{\mathbb}
\newcommand{\mf}{\mathfrak}
\newcommand{\on}{\operatorname}
\newcommand{\opn}{\operatorname}
\newcommand{\x}{\mathbf{x}}
\renewcommand{\hat}{\widehat}
\newcommand{\p}{\mathfrak{p}}
\newcommand{\s}{{\scriptscriptstyle \leqslant }}
\renewcommand{\leq}{\leqslant}
\newcommand{\n}{{\scriptscriptstyle\lhd}}
\newcommand{\ds}{\displaystyle}
\newcommand{\Spec}{\operatorname{Spec}}
\begin{document}
	\title{On the degree of polynomial subgroup growth of nilpotent groups}

	\author{D. Sulca}
	\address{Facultad de Matem\'aticas, Astronom\'ia y F\'isica, Universidad Nacional de C\'ordoba, Ciudad Universitaria, C\'ordoba X5000HUA, Argentina}
	
	\email{diego.a.sulca@unc.edu.ar}
	
	
	
	
	
	
	
	
	\keywords{Subgroup growth, zeta functions of group and rings}
	\subjclass[2010]{11M41; 20E07}
	
	\begin{abstract}
	Let $N$ be a finitely generated nilpotent group. The subgroup zeta function $\zeta_N^\s(s)$ and the normal zeta function $\zeta_N^\n(s)$ of $N$ are Dirichlet series enumerating the finite index subgroups or the finite index normal subgroups of $N$. 
We present results about their abscissae of convergence $\alpha_N^\s$ and $\alpha_N^\n$, also known as the degrees of polynomial subgroup growth and polynomial normal subgroup growth of $N$, respectively. 

	We first prove some upper bounds for the functions $N\mapsto \alpha_N^\s$ and $N\mapsto\alpha_N^\n$ when restricted to the class of torsion-free nilpotent groups of a fixed Hirsch length. 
We then show that if two finitely generated  nilpotent groups have  isomorphic $\mb{C}$-Mal'cev completions, then their subgroup (resp.\ normal) zeta functions have the same abscissa of convergence. This follows, via the Mal'cev correspondence, from a similar result that we establish for zeta functions of rings. 
This result is obtained by proving that the abscissa of convergence of an Euler product of certain Igusa-type local zeta functions introduced by du Sautoy and Grunewald remains invariant under base change. We also apply this methodology to formulate and prove a version of our result about nilpotent groups for virtually nilpotent groups.

		As a side application of our result about zeta functions of rings, we present a result concerning the distribution of orders in number fields.
\end{abstract}

	\maketitle	
\section{Introduction}
\noindent Let $G$ be a finitely generated group and let
\begin{align*}
a_n^\s(G):=|\{ H\leq G : [G:H]=n\}|,\quad a_n^\n(G):=|\{H\lhd G  : [G:H]=n\}|
\end{align*}
be the number of subgroups or normal subgroups of index $n$ in $G$. These numbers are finite and
the associated Dirichlet series
\begin{align*}
	\zeta_G^{\s}(s):=\sum_{n=1}^\infty\frac{a_n^\s(G)}{n^s}=\sum_{H\leq_f G} [G:H]^{-s},\quad\quad  \zeta_G^\n(s):=\sum_{n=1}^\infty \frac{a_n^\n(G)}{n^s}=\sum_{H\lhd_f G} [G:H]^{-s},
\end{align*}
are called {\em the subgroup zeta function} and {\em the normal zeta function} of $G$, respectively. The symbol $H\leq_f G$ (resp.\ $H\lhd_f G$) indicates that the summation is over all subgroups (resp.\ normal subgroups) $H$ of finite index in $G$. We write $\zeta_G^*(s)$ when we intend to address both types of zeta functions simultaneously. 
These zeta functions were introduced by Grunewald, Segal and Smith in the landmark paper \cite{GSS}. 
\begin{example}\label{zeta function of Z^d}
Let $h\in \mathbb{N}$ and let $\mathbb{Z}^h$ be the free abelian group of rank $h$. Then
\begin{align*}
	\zeta_{\mb{Z}^h}^*(s)=\zeta(s)\zeta(s-1)\cdots\zeta(s-h+1),
\end{align*}
where $\zeta(s):=\sum_{n=1}^\infty n^{-s}$ is the Riemann zeta function (cf.\ \cite[Proposition 1.1]{GSS} or \cite[Chapter 15]{LS}).
\end{example}

Observe that the subgroup zeta function of $\mb{Z}^h$ converges on a non-empty region of the complex plane, namely the region $\on{Re}(s)>h$. This is a characteristic property of groups of {\em polynomial subgroup growth} (PSG), i.e.\ groups $G$ for which the function $n\mapsto \sum_{i=1}^na_i^\s(G)$ is bounded by a polynomial function in $n$.  More generally, for $*\in\{\leq,\lhd \}$ we set
$$
\alpha_G^*:=\inf\left\{\alpha \ |\ \exists c>0\ \forall n: \sum_{i=1}^n a_i^*(G)\leq c n^\alpha\right\},
$$
where conventionally $\inf\emptyset=\infty$. When $\alpha_G^*<\infty$, we call this number {\em the degree of polynomial (normal) subgroup growth of $G$}.
If $a_n^*(G)\neq 0$ for infinitely many $n$, then $\alpha_G^*$ coincides with the abscissa of convergence of $\zeta_G^*(s)$, that is, $\zeta_G^*(s)$ defines an analytic function on the region $\on{Re}(s)>\alpha_G^*$ and diverges for any $s$ with $\on{Re}(s)<\alpha_G^*$. 
A related invariant for groups of polynomial subgroup growth (called {\em the degree} of the group) was introduced and studied by Shalev; see for instance the influential paper \cite{Sh}.
For other types of growth we refer to \cite{LS}. 

The finitely generated groups of polynomial subgroup growth have been characterized algebraically by Lubotzky, Mann and Segal in \cite{LMS}. We recall this characterization.
Note first that the subgroup growth (or the normal subgroup growth) of a group $G$ is the same as that of the quotient group $G/R(G)$, where $R(G):=\bigcap_{N\lhd_f G} N$ is the finite residual of $G$, so there is no loss of generality in assuming that the groups under consideration are residually finite, i.e.\ the finite residual is trivial.  
It is proven in \cite{LMS} that a finitely generated residually finite group has polynomial subgroup growth if and only if it is virtually soluble of finite rank. 

We shall only deal with groups of polynomial subgroup growth that are nilpotent or virtually nilpotent. For these groups we will make some observations about the behavior of the function $G\mapsto\alpha_G^*$. These observations are mainly corollaries of properties of certain Igusa-type zeta functions called {\em cone integrals} that arise in the analysis of $\zeta_G^*(s)$. 

\medskip

\subsection{Zeta functions of $\mf{T}$-groups}
Since their introduction, zeta functions of groups have been investigated mainly for {\em $\mathfrak{T}$-groups}, i.e.\ finitely generated torsion-free nilpotent groups.
If $N$  is a $\mf{T}$-group, then there is an Euler product decomposition
\begin{align}\label{Euler product}
	\zeta_N^*(s)=\prod_{p\ \textrm{prime}} \zeta_{N,p}^*(s)=\prod_{p\ \textrm{prime}} \zeta_{\hat{N}_p}^*(s),
\end{align}
where $\zeta_{N,p}^*(s):=\sum_{k=0}^\infty  a_{p^k}^*(N)p^{-ks}$ is {\em the local factor of $\zeta_N^*(s)$ at $p$}, and $\hat{N}_p$ denotes the pro-$p$ completion of $N$. 
In addition,  $\zeta_{N,p}^*(s)$ is a rational function in $p^{-s}$ \cite{GSS}.

The study of zeta functions of $\mf{T}$-groups led to the consideration of {\em zeta functions of rings}, which we recall below.
The book \cite{dSW} collects comprehensive information about the first stage of the theory of zeta functions of groups and rings. The survey \cite{V2} exposes new developments. Let us also mention  \cite{CSV}, \cite{LeeVoll2020}, \cite{Rossmann2018} and
Rossmann's computer-algebra
package ZETA \cite{Rossmann2017} (which effectively computes, among other things, many (normal) zeta functions of nilpotent groups of
moderate Hirsch length) just to illustrate the current activity on the subject. 
We summarize some outstanding analytic properties of zeta functions of $\mf{T}$-groups obtained by du Sautoy and Grunewald.
\begin{theorem}[\cite{dSG}]\label{results of dSG}
Let $N$ be an infinite $\mf{T}$-group.
\begin{enumerate}
\item  $\alpha_N^*$ is a rational number and there exists $\delta>0$ such that $\zeta_N^*(s)$ can be meromorphically continued to the region $\on{Re}(s)>\alpha_N^*-\delta$. The continued function is holomorphic on the line $\on{Re}(s)=\alpha_N^*$ except for a pole at $s=\alpha_N^*$. 
\item If $b_N^*$ is the order of the pole of the continued function and $g_N^*(s)$ denotes the continuation of $(s-\alpha_N^*)^{b_N^*}\zeta_N^*(s)$, then
\begin{align*}
\sum_{i=1}^n a_i^*(N)\ \sim \  \frac{g_N^*(\alpha_N^*)}{\alpha_N^*\cdot (b_{N}^*-1)!}\cdot  n^{\alpha_N^*}(\log n)^{b_N^*-1}
\end{align*}
\end{enumerate}
where we write $f(n)\sim g(n)$ if $\lim_{n\to\infty} f(n)/g(n)=1$.
\end{theorem}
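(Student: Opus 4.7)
The plan is to combine the Euler product decomposition~(\ref{Euler product}) with a uniform description of each local factor as an Igusa-type $p$-adic integral. The first step is to realise $\zeta_{N,p}^*(s)$ as a \emph{cone integral}, a $p$-adic integral of the form
\begin{align*}
Z(s,p)=\int_{V_p}|f_0(\x)|_p^{s}\prod_{i\in I}\mathbf{1}_{\{|g_i(\x)|_p\leq|h_i(\x)|_p\}}\,d\mu(\x),
\end{align*}
where the polynomial data $\{f_0,g_i,h_i\}_{i\in I}$ are defined globally over $\mb{Q}$ and independent of $p$. For the subgroup zeta function this is essentially the matrix-based parameterisation of finite-index subgroups introduced by Grunewald--Segal--Smith; for the normal zeta function one adjoins further polynomial inequalities encoding normality.

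The second step applies Hironaka's embedded resolution of singularities to the divisor cut out by the product of the polynomials $f_0,g_i,h_i$. This yields a principalisation $Y\to\mb{A}^n_{\mb{Q}}$ with smooth exceptional components $\{E_t\}_{t\in T}$ carrying numerical data $(A_t,B_t)$ that record the multiplicities of the relevant pullbacks along each $E_t$. For primes $p$ of good reduction, stratifying $Y(\mb{F}_p)$ according to which of the $E_t$ contain each point yields a decomposition of the $p$-adic integral as a sum over subsets $S\subseteq T$ of contributions proportional to
\begin{align*}
c_S(p)\prod_{t\in S}\frac{(p-1)\,p^{-A_t s-B_t-1}}{1-p^{-A_t s-B_t}},
\end{align*}
where $c_S(p)$ counts $\mb{F}_p$-points of a locally closed stratum. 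This provides an explicit, uniformly rational expression for $\zeta_{N,p}^*(s)$ at almost all $p$.

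The third step assembles the Euler product. The Lang--Weil estimates make each $c_S(p)$ a polynomial in $p$ up to a quantitative error, and extracting the dominant contribution yields a factorisation
\begin{align*}
\zeta_N^*(s)=\prod_{t\in T_0}\zeta(A_t s-B_t)^{m_t}\cdot W(s),
\end{align*}
in which $T_0\subseteq T$ collects the data governing the critical line and $W(s)$ is a correcting Dirichlet series that converges absolutely on some half-plane $\on{Re}(s)>\alpha_N^*-\delta$ with $\delta>0$. Since each translated factor $\zeta(A_t s-B_t)$ is meromorphic on $\mb{C}$ with a single simple pole at the real point $s=(B_t+1)/A_t$, the abscissa $\alpha_N^*=\max_{t\in T}(B_t+1)/A_t$ is rational and $\zeta_N^*(s)$ inherits a meromorphic continuation past $\on{Re}(s)=\alpha_N^*$ whose only pole on this line is at $s=\alpha_N^*$, proving~(1). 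Part~(2) then follows by feeding the resulting meromorphic data, together with the order $b_N^*$ of the pole, into a Tauberian theorem of Delange type.

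The main obstacle will be the third step, specifically showing that the correcting Dirichlet series $W(s)$ truly converges on a strictly larger half-plane. This requires quantitative Lang--Weil bounds that are uniform in both the strata and the prime $p$, together with a careful accounting of exactly which $(A_t,B_t)$ contribute poles of top order at $s=\alpha_N^*$ and which produce cancellations. Once this uniformity is in place, the rationality of $\alpha_N^*$ and the Tauberian derivation of the asymptotic in~(2) are essentially standard manipulations of Dirichlet series.
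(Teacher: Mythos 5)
Your proposal follows essentially the same route as du Sautoy--Grunewald \cite{dSG}, which the paper cites and whose cone-integral machinery it reviews in Section~\ref{Integrales conicas}: local factors as uniformly defined $p$-adic cone integrals, Hironaka resolution and the numerical data $(A_t,B_t)$, rationality of $\alpha_N^*$ (cf.\ Proposition~\ref{formula for the good primes} and Proposition~\ref{formula for the alpha_D}), meromorphic continuation by comparison with an Euler product of translated $\zeta$-factors, and a Delange-type Tauberian theorem for part~(2). One refinement worth flagging in your third step: the dominant factor of the Euler product is in general a finite product of translated \emph{Artin $L$-functions}, not pure Riemann $\zeta$-factors, because the strata arising in the resolution need not be geometrically irreducible over $\mathbb{Q}$ and the point counts $c_S(p)$ are governed by the Frobenius action on the set of geometric components; the holomorphy and non-vanishing of these $L$-functions on $\operatorname{Re}(s)\geq 1$ (away from the trivial-character pole at $s=1$) is what lets one read off $\alpha_N^*$ and $b_N^*$, while Lang--Weil/Deligne bounds control the correcting factor $W(s)$ exactly as you indicate.
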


One natural problem is to relate $\alpha_N^*,b_N^*\in\mathbb{R}$ to structural information about $N$. This was posed as Problem 1.1 in \cite{dSG1} and remains open in general.   Example \ref{zeta function of Z^d} shows that if $N$ is a free abelian group of rank $h\geq 1$, then $\alpha_N^*=h$ and $b_N^*=1$.
However, if $N$ is a non-abelian $\mf{T}$-group, the computation of $\alpha_N^*$ is already a challenge.
The values of $\alpha_N^*$ and $b_N^*$ for various $\mf{T}$-groups are collected in \cite{dSW}. 
In \cite[Section 6.2]{Rossmann20172}, Rossamnn computes $\alpha_N^\n$ when $N$ is a $\mf{T}$-group of maximal nilpotency class. 
As an example of computation of $\alpha_N^\n$, we mention the following remarkable calculation by  Voll, which was obtained by purely combinatorial methods.
\begin{example}[\cite{V0}]
	Let $F=F_{2,d}$ be the free nilpotent group of class 2 on $d$ generators ($d\geq 2$). Then
	$$
	\alpha_F^\n=\max\left\{d,\frac{(\binom{d}{2}-j)(d+j)+1}{\binom{d+1}{2}-j}\ \left| \  j=1,\ldots,\binom{d}{2}-1\right. \right\}. 
	$$
\end{example} 
\begin{remark}
The proof of Theorem 1.2(1) given in \cite{dSG} expresses $\alpha_N^*$ in terms of some numerical data associated to a principalization of an ideal of polynomials over $\mathbb{Q}$ obtained from $N$ (see Section \ref{Integrales conicas} for a review of this). Even though nowadays there are algorithmic resolutions of singularities and principalizations of ideals, they are impractical in this context since the ideals obtained from $N$ are very complicated (several polynomials in several variables), even for quite simple $N$.
\end{remark} 
\medskip

We mentioned that if $N$ is the free abelian group of rank $h$, then $\alpha_{N}^\s=\alpha_N^\n=h$. An extension of the notion of rank to the class of $\mf{T}$-group, or more generally, to the class of polycyclic groups, is the notion of Hirsch length. 
If $N$ is a polycyclic group, its Hirsch length is the number of infinite factors in a subnormal series with cyclic factors, and it is denoted by $h(N)$. 
Now, if $N$ is a non-abelian $\mf{T}$-group, then a simple argument shows that $h(N^{ab})\leq \alpha_N^\n\leq\alpha_N^\s\leq h(N)$, where $N^{ab}$ denotes the abelianization of $N$ (see \cite[Proposition 1]{GSS}).
There are better bounds for $\alpha_N^\n$ when $N$ is a non-abelian $\mf{T}$-group of nilpotency class 2; cf.\ 
\cite{PAA} and \cite[Proposition 6.3]{GSS}. 
We also mention the lower bound  $\frac{1}{6}h(N)\leq \alpha_N^\s$ (\cite[Theorem 5.6.6]{LS}), which actually holds for a larger class of groups including the polycyclic groups.
All these results are useful for the following problem: given $h>2$, describe the set $S_h^*$ of possible values for $\alpha_N^*$ as $N$ ranges over the non-abelian $\mf{T}$-groups of Hirsch length $h$. According to \cite[Proposition 1.1]{dSG1} and Theorem \ref{results of dSG}(a), $S_h^*$ is a finite subset of $[2,h]\cap\mb{Q}$. In particular, one can ask what is the maximum of $S_h^*$. Our first result gives a partial answer to this question.
\begin{customthm}{A}\label{main theorem:upper bound for a(N)}
Let $N$ be a non-abelian $\mf{T}$-group of Hirsch length $h$ and nilpotency class $c$.
Then $\alpha_N^\s\leq h-\frac{1}{2}$ if $c=2$  and $\alpha_N^\s\leq h-\frac{1}{c-1}$ if $c>2$. Also $\alpha_N^\n\leq h-1$.

In particular, given $h\in\mathbb{N}$, the only $\mf{T}$-group $N$ of Hirsch length $h$ with $\alpha_N^\s=h$ is the abelian group $\mathbb{Z}^h$. 
\end{customthm}
\begin{remark}
The bound $\alpha_N^\n\leq h-1$ is optimal. Indeed, choose integers $r\geq 0$ and $m>0$ such that $2m+1+r=h$, and let $G(m,r)$ be the product of $\mathbb{Z}^r$ with a central product of $m$ copies of the discrete Heisenberg group $H_3(\mathbb{Z})$. Then $h(G(m,r)^{ab})=h-1\leq \alpha_{G(m,r)}^\n\leq h-1$, which gives $\alpha_{G(m,r)}^\n=h-1$. 
In contrast, by our method in the proof of Theorem \ref{main theorem:upper bound for a(N)}, it seems that our bound for $\alpha_N^\s$ is far from being optimal. The few examples where $\alpha_N^\s$ has been computed show that $\alpha_N^\s\leq h(N)-1$ if $N$ is a non-abelian $\mf{T}$-group.
\end{remark}

\medskip 

We return with the notation of Theorem \ref{results of dSG}. In trying to understand which structural data of $N$ is reflected in $\alpha_N^*$ and $b_N^*$
it is natural to ask, given two $\mf{T}$-groups $N_1$ and $N_2$,  when $\alpha^*_{N_1}=\alpha_{N_2}^*$, and if this is so, when $b_{N_1}^*=b_{N_2}^*$.
According to \cite[Proposition 3]{GSS}, if $N_1$ and $N_2$ are commensurable $\mf{T}$-groups (i.e.\ there exist finite index subgroups $H_1\leq N_1$ and $H_2\leq N_2$ such that $H_1\cong H_2$), then $\alpha_{N_1}^*=\alpha_{N_2}^*$, and moreover $b_{N_1}^*=b_{N_2}^*$ (cf.\ Proposition \ref{the Mal'cev correspondence}). 
We rephrase this fact.  Recall first that a $\mf{T}$-group can be embedded as an arithmetic group of a uniquely determined  unipotent group scheme over $\mb{Q}$. In addition, two $\mf{T}$-groups are commensurable if and only if they are isomorphic to arithmetic groups of the same unipotent group scheme over $\mb{Q}$. Thus, the fact that $\alpha_N^*$ and $b_N^*$ are commensurability-invariant can be restated as follows: 
{\em 
 If $N_1$ and $N_2$ are arithmetic groups of the same unipotent group scheme over $\mb{Q}$, then $\alpha_{N_1}^*=\alpha_{N_2}^*$ and $b_{N_1}^*=b_{N_2}^*$.}
Our next result is a partial generalization of this property.
\begin{customthm}{B}\label{main theorem}
Let $\mf{N}_1$ and $\mf{N}_2$ be unipotent group schemes over $\mb{Q}$, and let $N_i$ be an arithmetic subgroup of $\mf{N}_i$ for $i=1,2$. 
If $\mf{N}_1$ and $\mf{N}_2$ are isomorphic after base change with $\mb{C}$, then  $\alpha_{N_1}^*=\alpha_{N_2}^*$ for $*\in\{\leq,\lhd\}$.
\end{customthm}
\begin{remark}
In contrast, we expect that $b_{N_1}^*=b_{N_2}^*$ does not hold in general. Indeed, Remark \ref{main theorem does not hold for b} below shows that this equality fails for zeta functions of rings in general. However, the counter-example that we will show does not belong to the realm of nilpotent Lie rings (or groups).
\end{remark}

In other words, Theorem \ref{main theorem} says that the number $\alpha_N^*$ associated to a $\mf{T}$-group $N$ is in fact an invariant of the $\mb{C}$-Mal'cev completion of $N$, or simply that it is a geometric invariant.
Similar conclusions in spirit can be deduced
 from the main result of \cite{DV} for the representation zeta function of $\mf{T}$-groups, and from the main results of \cite{AKOV} and \cite{LN} for the degree of polynomial representation growth and for the subgroup growth rate of arithmetic groups in simply connected absolutely simple group schemes over $\mathbb{Q}$. 
Observe finally that a result similar to Theorem \ref{main theorem} holds for the degree of polynomial word growth by the formula of Bass-Guivarc'h \cite{Bass}.

\begin{remark}
There is a known classification of nilpotent Lie algebras in dimension $\le 7$ over $\mb{C}$ (and not over $\mb{Q}$).
Hence Theorem \ref{main theorem} could be used to completely determine, possibly with
computer help, the number $\alpha_N^*$ for all $\tau$-groups $N$ of Hirsch length at most 7.
\end{remark}
 
\medskip

\subsection{Zeta functions of rings}
Theorem \ref{main theorem} is obtained as a consequence of a more general result, namely Theorem \ref{main theorem: general version}, which employs the concept of zeta functions of rings introduced in \cite[Section 3]{GSS}. By a {\em ring} we shall mean an abelian group $L$ with a bilinear map $L\times L\to L$ called {\em multiplication}, e.g.\ Lie rings and the commutative rings with identity. 
A {\em subring} of $L$ is an abelian subgroup $A$ that is closed under multiplication. To allow further applications, in case that $L$ is commutative ring with identity 1, we shall also require (as usual) that $1\in A$. A two-sided ideal of $L$ will be simply called {\em an ideal}.

Let $L$ be a ring additively isomorphic to $(\mb{Z}^h,+)$ for some $h\geq 1$. For each positive integer $n$, let $a_n^\s(L)$ and $a_n^\n(L)$ denote the number of subrings  or ideals of index $n$ in $L$. {\em The subring and the ideal zeta functions} of $L$ are the Dirichlet series $\zeta_L^\s(s):=\sum_{n=1}^\infty  a_n^\s(L)n^{-s}$ and $\zeta_L^\n(s):=\sum_{n=1}^\infty  a_n^\n(L)n^{-s}$, respectively. They have a factorization as an Euler product
\begin{align*}
	\zeta_{L}^*(s)=\prod_{p\ \textrm{prime}} \zeta_{L,p}^*(s),
\end{align*}  
where $\zeta_{L,p}^*(s):=\sum_{k=0}^\infty  a_{p^k}^*(L)p^{-ks}$. 

The zeta functions of an arbitrary ring of additive rank 2 were computed in \cite[Chapter 7]{Sn12}. The subring zeta function of an arbitrary Lie ring of additive rank 3 was computed in \cite{KlopschVoll2007}. Zeta functions of nilpotent Lie rings are essentially the same as zeta functions of $\mf{T}$-groups:
via the Mal'cev correspondence one can associate with a $\mf{T}$-group $N$ of Hirsch length $h$ a nilpotent Lie ring $L$ of additive rank $h$ (and viceversa), and it holds that $\zeta_L^*(s)$ and $\zeta_{N}^*(s)$ have the same local factors for almost all primes $p$ (see Proposition \ref{the Mal'cev correspondence}). 
Some information about zeta functions of soluble Lie rings of higher rank can be found in \cite[Chapter 3]{dSW}. 

Theorem \ref{results of dSG}, formulated there for zeta functions of $\mf{T}$-groups, was also proved for zeta functions of rings in \cite{dSG}. 
In particular, one can consider the pair $(\alpha_L^*,b_L^*)$, where $\alpha_L^*$ is the abscissa of convergence of $\zeta_L^*(s)$ and $b_L^*$ is the order of the pole of the continued function at $s=\alpha_L^*$. Our next result is: 
\begin{customthm}{C}\label{main theorem: general version}
Let $L_1$ and $L_2$ be two rings additively isomorphic to $(\mb{Z}^h,+)$. If $L_1\otimes_\mb{Z} \mb{C}\cong L_2\otimes_\mb{Z}\mb{C}$ as $\mb{C}$-algebras, then $\alpha_{L_1}^*=\alpha_{L_2}^*$ for $*\in\{\leq,\lhd\}$.
\end{customthm}
\begin{example}\label{ex: Heisenberg}
Let $\mc{H}:=\langle x,y,z: [x,y]=z,\ [x,z]=[y,z]=0\rangle$ be the discrete Heisenberg Lie ring. For each square free integer $k$ we consider $L_k:=\mc{H}\otimes_{\mb Z}\mc{O}_{\mb{Q}(\sqrt{k})}$, where
 $\mc{O}_{\mb{Q}(\sqrt{k})}$ denotes the ring of integers of the quadratic field $\mb{Q}(\sqrt{k})$. This is a 2-step nilpotent Lie ring of additive rank 6.
 If $k\neq k'$, then $L_k\otimes_\mb{Z}\mb{Q}$ and $L_{k'}\otimes_{\mb{Z}}\mb{Q}$ are not isomorphic as $\mb{Q}$-Lie algebras (cf. \cite[Proposition 3.2]{Lauret}). However, $L_k\otimes_\mb{Z}\mb{C}\cong \mc{H}^2\otimes_{\mb{Z}}\mb{C}$ as $\mb{C}$-Lie algebras (where $\mc{H}^2=\mc{H}\times\mc{H}$) and hence $\alpha_{L_k}^*=\alpha_{\mc{H}^2}^*$ for all $k$ as above.
	 The zeta functions of $\mc{H}^2$ were computed in \cite[Proposition 8.11]{GSS} and \cite{Taylor}, and it holds that $\alpha_{\mc{H}^2}^*=4$.
We can now use	  Theorem \ref{main theorem: general version} to conclude that $\alpha_{L_k}^*=4$ for all square-free integer $k$. 
	  
	  We remark that a formula for $\zeta_{L_k}^\n(s)$ was given in \cite[Corollary 8.2]{GSS}. More generally, for any number field $K$, the local factors of $\zeta_{\mc{H}\otimes_{\mb{Z}}\mc{O}_K}^\n(s)$ at almost all primes $p$ were computed in \cite{SV1} and \cite{SV2}. A further analysis on the Euler product of these local factors is required to compute the abscissa of convergence $\alpha_{\mc{H}\otimes_\mb{Z}\mc{O}_K}^\n$.
\end{example}

As a generalization of the above observation we have the following:
\begin{customcor}{D}\label{main result: corollary}
Let $L$ be a ring additively isomorphic to $(\mb{Z}^h,+)$, $K\supset\mb{Q}$ a number field of degree $d$ and $\mc{O}$ its ring of integers. 
Let $L_\mc{O}:=L\otimes_{\mb Z}\mc{O}$.
Then $\alpha_{L_\mc{O}}^*=\alpha_{L^d}^*$, where $L^d$ denotes the product of $d$ copies of the ring $L$.
\end{customcor}
\begin{proof}
Note that $L_\mc{O}\otimes_\mb{Z}\mb{C}=(L\otimes_{\mb{Z}}\mc{O})\otimes_{\mb{Z}}\mb{C}\cong L\otimes_{\mb Z}(\mc{O}\otimes_{\mb{Z}}\mb{C})\cong  L\otimes_{\mb{Z}}\mb{C}^d\cong L^d\otimes_{\mb{Z}}\mb{C}$ as $\mb{C}$-algebras. Hence $\alpha_{L_\mc{O}}^*=\alpha_{L^d}^*$ by Theorem \ref{main theorem: general version}. 
\end{proof}

\begin{remark}\label{main theorem does not hold for b}
Let $L=\mb{Z}$, viewed as commutative ring with identity. Then, $L_{\mb{Z}[i]}:=\mb{Z}\otimes_{\mb{Z}}\mb{Z}[i]$ is the ring of Gaussian integers and $\zeta_{L_{\mb{Z}[i]}}^\n(s)$ is the Dedekind zeta function $\zeta_{\mb{Q}(i)}(s)$ of $\mb{Q}(i)$. Hence $\alpha_{L_{\mb{Z}[i]}}^\n=1$ and $b_{L_{\mb{Z}[i]}}^\n=1$. Note also that the product ring $L^2$ has ideal zeta function $(\zeta(s))^2$, whence $\alpha_{L^2}^\n=1$ and $b_{L^2}^\n=2$. Since $L_{\mb{Z}[i]}\otimes_{\mb{Z}}\mb{C}\cong L^2\otimes_\mb{Z}\mb{C}$, we deduce that Theorem \ref{main theorem: general version} does not hold for $b_L^*$ in general.
\end{remark}

\begin{remark}
The computation of $\alpha_{L^d}^\s$ is in general a quite difficult task. For example, let $\mb{Z}_{\textrm{ring}}^d$ be the ring that is a product of $d$ copies of the ring of integers $\mb{Z}$. Then $\alpha_{\mb{Z}_{\textrm{ring}}^d}^\s=1$ for $d\leq 5$ (\cite[Theorem 6]{KMT}) while it is unknown for $d>5$.
The computation of $\alpha_{L^d}^\n$ might be also difficult if $L$ lacks an identity element. For instance, following up Example \ref{ex: Heisenberg} and Corollary \ref{main result: corollary} we find that $\alpha_{\mc{H}\otimes_\mb{Z}\mc{O}_K}^\n=\alpha_{\mc{H}^d}^\n$, where $d=[K:\mb{Q}]$. We have  $\alpha_{\mc{H}^d}^\n=2d$ if $d\leq 4$ (see \cite[Chapter 1]{dSW}), however nothing is known for $d>4$.
\end{remark}

\medskip 
Let us give an application of Corollary \ref{main result: corollary} to the distribution of orders in number fields.
Let $K$ be a number field and let $\mc{O}_K$ be its ring of integers. 
{\em An order} is a subring $\mc{O}$ of $\mc{O}_K$ with identity that is a $\mb{Z}$-module of rank $n$. 
Set
\begin{align*}
N_K(n):=|\{\mc{O}\subseteq\mc{O}_K\ |\ \on{disc}(\mc{O})\leq n\}|.
\end{align*}
The asymptotic behavior of $n\mapsto N_K(n)$ was studied in \cite{KMT}. It was shown, by an application of the results in \cite{dSG} (the version of Theorem \ref{results of dSG} for zeta functions of rings), that there exist $C_K\in\mathbb{R}$, $\alpha_K\in\mathbb{Q}$ and $\beta_K\in\mb{N}$ such that
\begin{align*}
N_K(n)\sim C_K n^{\alpha_K}(\log n)^{\beta_K-1}.
\end{align*}
It was also conjectured that the number $\alpha_K$ only depends on the degree $[K:\mb{Q}]$; see \cite[Conjecture 1]{KMT}. We now show that this is a special case of Corollary \ref{main result: corollary}.
\begin{customthm}{E}\label{main result: Conjecture}
Let $d:=[K:\mb{Q}]$, and let $\mb{Z}_{\textrm{ring}}^d$ denote the product of $d$ copies of the ring of integers $\mb{Z}$. Then $\alpha_K=\frac{1}{2} \alpha_{\mb{Z}_{\textrm{ring}}^d}^\s$. In particular, $\alpha_K$ only depends on the degree $[K:\mb{Q}]$.
\end{customthm}
\begin{proof}
 Let $\eta_K(s)=\ds\sum_{\mc{O}\ \textrm{order}} |\on{disc}\mc{O}|^{-s}$. Then $\eta_K(s)=|\on{disc}\mc{O}_K|^{-s}\zeta_{\mc{O}_K}^\s(2s)$, where $\mc{O}_K$ is seen as a ring with identity. Observe that $\alpha_K$ is the abscissa of convergence of $\eta_K(s)$, hence $\alpha_K=\frac{1}{2}\alpha_{\mc{O}_K}^\s$. Note also that Corollary \ref{main result: corollary} with $L=\mb{Z}$ (the ring of integers) yields $\alpha_{\mc{O}_K}^\s=\alpha_{\mb{Z}_{\textrm{ring}}^d}^\s$. Therefore, $\alpha_K=\frac{1}{2}\alpha_{\mb{Z}_{\textrm{ring}}^d}^\s$.
\end{proof}

\subsection{Methodology}
The idea behind the proof of Theorem \ref{main theorem: general version} is quite simple. If $L_1$ and $L_2$ are isomorphic after base change with $\mb{C}$, then $L_1\otimes_\mb{Z}K\cong L_2\otimes_\mb{Z}K$ for some number field $K$ (Lemma \ref{an algebraic geometry lemma}). 
Let $\mc{O}$ be the ring of integers of $K$. 
We associate to the $\mc{O}$-algebra ${L_i}_\mc{O}:=L_i\otimes_\mb{Z}\mc{O}$ the zeta functions $\zeta_{{L_i}_\mc{O}}^{\s_\mc{O}}(s)$ and $\zeta_{{L_i}_\mc{O}}^{\n_\mc{O}}(s)$ enumerating $\mc{O}$-subalgebras or $\mc{O}$-ideals of ${L_i}_\mc{O}$, and show that  $\zeta_{L_i}^*(s)$ and  $\zeta_{{L_i}_\mc{O}}^{*_\mc{O}}(s)$ have the same abscissa of convergence (Corollary \ref{invariance of a_L under base extension}). 
The proof of this fact makes use of the main tool of the paper: {\em cone integrals} (see Section \ref{Integrales conicas}).
 We show that for each $i=1,2$, $\zeta_{L_i}^*(s)$ and  $\zeta_{{L_i}_\mc{O}}^{*_{\mc{O}}}(s)$ are Euler products of cone integrals with the same cone integral data but over different fields, namely over $\mb{Q}$ and over $K$(Corollary \ref{existence of the cone integral data}), and hence they have the same abscissa of convergence.
This last assertion follows from Theorem \ref{basic properties of cone integrals}, which collects several properties about cone integrals. 
Finally, Theorem \ref{basic properties of cone integrals} will also enable us to conclude that $\zeta_{{L_1}_\mc{O}}^{*_\mc{O}}(s)$ and $\zeta_{{L_2}_\mc{O}}^{*_\mc{O}}(s)$ have the same abscissa of convergence (Corollary \ref{invariance of a_L and b_L by commmensurability}), from which Theorem \ref{main theorem: general version} follows.

Theorem \ref{basic properties of cone integrals} will be also used to formulate and prove a version of Theorem \ref{main theorem} for virtually nilpotent groups (Theorem \ref{main theorem for virtually nilpotent}). This will be possible as the zeta functions of these groups can be expressed as finite sums of series that are Euler products of cone integrals. We omit this version in the introduction and refer the reader to Section \ref{Sec: Main theorem for virtually nilpotent groups}.

\subsection{Organization and notation}
In Section \ref{Integrales conicas}, we recall the concept and some important properties of cone integrals over $\mb{Q}$ from \cite{dSG}, and we extend them for any number field. 
We use these results to prove Theorem \ref{main theorem: general version} in Section \ref{Sec: Proof of main theorem for rings} and to recall how this theorem implies Theorem \ref{main theorem}. In Section \ref{Sec: Proof of Theorem A}, we prove Theorem \ref{main theorem:upper bound for a(N)}. This section is, to a large extent, independent from the other ones. 
Finally, in Section \ref{Sec: Main theorem for virtually nilpotent groups} we formulate and prove Theorem \ref{main theorem for virtually nilpotent}, which is the analogous of Theorem \ref{main theorem} for virtually nilpotent groups.	

We write $\mathbb{N}$ for the set $\{1,2,\ldots\}$ and $\mathbb{N}_{0}$ for the set $\mb{N}\cup\{0\}$. 
We write $\mathbb{R}_{>0}$ for the set $\{s\in\mathbb{R}: s>0\}$ and $\mathbb{R}_{\geq 0}$ for the set $\{s\in\mathbb{R}:s\geq 0\}$. The notation $f(n)\sim g(n)$ means that $f(n)/g(n)$ tends to 1 as $n$ tends to infinity.

For a prime $p$, $\mb{Z}_p$ and $\mb{Q}_p$ denote the $p$-adic integers and the $p$-adic rationals, respectively. For a number field $K$ we denote by $\mc{O}_K$ its ring of integers. Given a maximal ideal $\p\subset\mc{O}:=\mc{O}_K$ we denote by $\mc{O}_\p$ and $K_\p$ the $\p$-adic completions of $\mc{O}$ and $K$.
Given $x\in {K}_\p$ we denote by $\on{ord}_\p(x)\in\mb{Z}\cup\{\infty\}$ its $\p$-adic valuation and write $|x|_\mathfrak{p}:={\mathbf{N}\p}^{-\on{ord}_\mathfrak{p}(x)}$ for its $\p$-adic norm, where $\mathbf{N}\p:=[\mc{O}:\p]$. 
\subsection*{Acknowledgment}
This work was supported by CONICET (Argentina).
Theorem \ref{main theorem} is part of my Ph.D.\  thesis. I would like to thank my supervisors Paulo Tirao and Karel Dekimpe for their guidance and encouragement. I would also like to thank Marcos Origlia for a careful reading of a previous draft of the paper and useful remarks.
Finally, I am especially grateful with the two referees, whose comments led to a significant improvement in the presentation and the content of the paper. For instance, the inclusion of Corollary \ref{main result: corollary} and its subsequent application Theorem \ref{main result: Conjecture} was suggested by them.

\section{A review of cone integrals}\label{Integrales conicas}
\noindent 
Cone integrals are a kind of $p$-adic integrals which can be seen as a generalization of Igusa local zeta functions.
They were introduced and studied by du Sautoy and Grunewald in \cite{dSG} under the assumption that the base field is $\mb{Q}$. 
Theorem \ref{basic properties of cone integrals} collects the main properties of cone integrals and at the same time it extends them to cone integrals over a general number field. The rest of the section is devoted to explaining how this general formulation follows essentially by the same arguments of \cite{dSG}.

\begin{definition}\label{definition of cone integrals}
Let $K$ be a number field and let $m$ be a positive integer. A finite collection $\mathcal{D}=(f_0,g_0,f_1,g_1,\ldots,f_l,g_l)$ of non-zero polynomials  in $K[x_1,\ldots,x_m]$ is called {\em a cone integral data} over $K$. Let $\mc{O}=\mc{O}_K$, and let $\mathfrak{p}\subset\mc{O}$ be a maximal ideal. Then the integral
	\begin{align*}
		Z_\mathcal{D}(s,\mathfrak{p})=\int_{\mathcal{M}(\mathcal{D},\mathfrak{p})}|f_0(\x)|_\mathfrak{p}^s|g_0(\x)|_\mathfrak{p}|d\x|_\p, 
	\end{align*}
	where
	\begin{align*}
		\mathcal{M}(\mathcal{D},\p)=\{\x\in {\mc O}_\mathfrak{p}^m\ |\ \on{ord}_\mathfrak{p}(f_i(\x))\leq \on{ord}_\mathfrak{p}(g_i(\x))\ \mbox{for}\ i=1,\ldots,l\}, 
	\end{align*}
and $|d\x|_\p=|dx_1\wedge \cdots \wedge dx_m|_\p$ is the normalized additive Haar measure on ${\mc O}_\p^m$,
is called	 \emph{a cone integral over $K$}, with cone integral data $\mc{D}$.  

It is easy to see that for each maximal ideal $\p\subset\mc{O}$, the integral $Z_\mc{D}(s,\p)$ can be expressed as a power series, say  $Z_\mathcal{D}(s,\mathfrak{p})=\sum_{i=0}^\infty a_{\mathfrak{p},i}\mathbf{N}\mathfrak{p}^{-is}$,  where each $a_{\p,i}$ is a non-negative rational number. We associate to $\mc{D}$ the Dirichlet series
\begin{align*}
Z_{\mc{D}}(s):=\prod_{\substack{\p\subset\mc{O}\ \textrm{maximal}\\ a_{\p,0}\neq 0}}a_{\p,0}^{-1} Z_{\mc{D}}(s,\p),
\end{align*}
and denote its abscissa of convergence by $\alpha_{\mc{D}}$. A function $Z(s)$ such that $Z(s)=Z_\mc{D}(s)$ is said to be {\em an Euler product of cone integrals over $K$} with cone integral data $\mc{D}$.
\end{definition}

\begin{theorem}\label{basic properties of cone integrals}
Assume that $Z_{\mc{D}}(s)$ is not the constant function. Then the following holds.
\begin{enumerate}
\item Each $Z_{\mc{D}}(s,\p)$ is a rational function in $\mathbf{N}\p^{-s}$ with rational coefficients.
\item $\alpha_\mc{D}$ is a rational number, and the abscissa of convergence of each $Z_\mc{D}(s,\p)$ is strictly to the left of $\alpha_\mc{D}$.
\item  There exists $\delta>0$ such that $Z_{\mc{D}}(s)$ has meromorphic continuation to the region $\{s\in\mb{C}: \on{Re}(s)>\alpha_{\mc{D}}-\delta\}$, and the continued function is holomorphic on the line $\on{Re}(s)=\alpha_\mc{D}$ except at $s=\alpha_\mc{D}$, where it has a pole, say of order $b_{\mc{D}}$.
\item Let $Z(s)=\sum_{n=1}^\infty a_n{n^{-s}}$ be a Dirichlet series such that $Z(s)=Z_\mc{D}(s-h)$ for some $h$, and assume that its abscissa of convergence $\alpha=\alpha_\mc{D}+h$ is positive. Then there exist $c,c'\in\mathbb{R}$ such that
\begin{align*}
\sum_{n=1}^N a_n\sim cN^{\alpha}(\log N)^{b_\mc{D}-1}\quad\mbox{and}\quad \sum_{n=1}^N \frac{a_n}{n^{\alpha}}\sim c'(\log N)^{b_\mc{D}}.
\end{align*} 
\item Let $K'$ be a number field including $K$, and let $\mc{D}'$ be the same collection $\mc{D}$ viewed as cone integral data over $K'$. Then $\alpha_{\mc{D}}=\alpha_{\mc{D}'}$.
\end{enumerate}
\end{theorem}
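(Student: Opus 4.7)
Parts (1)--(4) extend the main analytic results of \cite{dSG} from $\mb{Q}$ to a general number field $K$, and my plan is to follow the same strategy, with the Dedekind zeta function $\zeta_K(s)$ taking the role of the Riemann zeta function. First I would apply Hironaka's theorem in characteristic zero to the ideal $(f_0 g_0 \cdots f_l g_l) \subset K[x_1,\ldots,x_m]$, obtaining a principalization $h : Y \to \mb{A}^m_K$ equipped with numerical data $\{(N_k, \nu_k)\}_{k \in T}$ attached to the irreducible components of the exceptional divisor. For almost all primes $\p \subset \mc{O}_K$ (those of good reduction for $h$), the non-archimedean change-of-variables formula then expresses $Z_\mc{D}(s, \p)$ as a finite $\mb{Q}$-linear combination, indexed by subsets $U \subseteq T$, of products of terms of the form $(\mathbf{N}\p - 1)\mathbf{N}\p^{-(N_k s + \nu_k)}/(1 - \mathbf{N}\p^{-(N_k s + \nu_k)})$, with coefficients $c_{\p, U}$ given by $\mb{F}_\p$-point counts of strata $\overline{E}_U$ of the special fiber of $Y$. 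This yields (1) and the rationality of $\alpha_\mc{D}$ in (2); the meromorphic continuation (3) and the Tauberian asymptotics (4) then follow from the analytic arguments of \cite[Sections 4--5]{dSG}, which carry over with only cosmetic modifications once $\zeta(s)$ is replaced by $\zeta_K(s)$, since both share the features (simple pole at $s=1$, polynomial growth on vertical strips, standard meromorphic continuation) that the argument requires.

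For part (5), the key observation is that the principalization $h$ base-changes to a principalization $h' : Y \times_K K' \to \mb{A}^m_{K'}$ of the same ideal viewed over $K'$, with the \emph{same} numerical data $(N_k, \nu_k)$ and with strata that are base changes of the original $\overline{E}_U$. Consequently, for almost all primes $\mf{P}$ of $\mc{O}_{K'}$ lying over primes $\p$ of $\mc{O}_K$ of good reduction, $Z_{\mc{D}'}(s, \mf{P})$ is given by the \emph{same} explicit formula as $Z_\mc{D}(s, \p)$, with $\mathbf{N}\p$ replaced by $\mathbf{N}\mf{P}$ and $c_{\p, U}$ by $c_{\mf{P}, U}$.

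From this I would deduce that $\alpha_\mc{D}$ is the infimum of those $s$ for which a finite collection of auxiliary Dirichlet series $\sum_\p c_{\p, U} \mathbf{N}\p^{-N_U s - \mu_U}$ converge simultaneously, where $N_U, \mu_U$ are integers determined by $(N_k, \nu_k)_{k \in U}$ (and similarly for $\mc{D}'$ with sums over $\mf{P}$). By the Lang--Weil estimates $c_{\p, U}$ grows like $\mathbf{N}\p^{\dim \overline{E}_U}$, and because the elementary series $\sum_\p \mathbf{N}\p^{-s}$ has abscissa of convergence $1$ over \emph{every} number field (equivalent to $\zeta_K(s)$ having a simple pole at $s = 1$), each auxiliary series has abscissa of convergence $(\dim \overline{E}_U + 1 - \mu_U)/N_U$ independently of the ground field. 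Hence $\alpha_\mc{D}$ and $\alpha_{\mc{D}'}$ are given by the same closed-form expression in the ground-field-independent geometric data $(N_k, \nu_k, \dim \overline{E}_U)$, and they coincide. The finitely many local factors at bad or ramified primes contribute rational functions in $\mathbf{N}\p^{-s}$ that are holomorphic and nonvanishing near $s = \alpha_\mc{D}$, so they do not affect the abscissa.

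The hard part will be extracting from the analytic machinery of \cite{dSG} a clean, closed-form expression for $\alpha_\mc{D}$ purely in terms of the geometric data of the principalization, in a form that is visibly independent of the ground field. This requires a careful Tauberian argument that isolates the rightmost pole of the continued Euler product and matches it against the contributions from each stratum; the key analytic inputs, namely Lang--Weil and the qualitative behaviour of $\zeta_K(s)$ at $s=1$, are both base-change invariant in the sense needed. Once this is in place, (5) follows immediately.
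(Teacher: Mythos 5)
Your overall strategy is the same as the paper's: resolve the product polynomial, use the change-of-variables formula to express $Z_\mc{D}(s,\p)$ for primes of good reduction as a finite sum over strata with coefficients $c_{\p,I}$ and products of geometric factors, invoke the analytic machinery of \cite{dSG} with $\zeta_K$ replacing $\zeta$, and for (5) observe that the numerical data of the resolution---and hence the closed-form expression for $\alpha_\mc{D}$---is unchanged under base change $K\subset K'$. The paper makes the last step precise in Proposition \ref{formula for the alpha_D}, where $\alpha_\mc{D}=\max\{(1-B_j)/A_j : A_j\neq 0\}$ with $A_j,B_j$ determined by the multiplicities $N_\iota(f_0), N_\iota(g_0),\nu_\iota$ and the extremal rays of the cone $\overline{D_T}$, all of which are defined before any choice of number field and so are visibly field-independent. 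This is exactly the ``clean closed-form expression'' you flag as the hard part; note that it has to encode the cone conditions (via $\overline{D_T}$) and not merely the strata dimensions, so your heuristic formula $(\dim\overline{E}_U + 1 - \mu_U)/N_U$ is not quite the right shape.

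The genuine gap in your sketch is the treatment of the finitely many bad/ramified primes. You assert that their local factors are ``holomorphic and nonvanishing near $s=\alpha_\mc{D}$, so they do not affect the abscissa,'' but this is precisely claim (2) of the theorem and it requires proof: one must show that the abscissa of convergence of \emph{every} local factor, including those at bad primes, lies strictly to the left of $\alpha_\mc{D}$. The good-prime formula does not apply there. The paper handles this with Proposition \ref{formula for bad primes}, which uses Stanley's theorem on generating functions of rational polyhedral cones to show that for \emph{any} $\p$, the denominator of $Z_\mc{D}(s,\p)$ as a rational function in $\mathbf{N}\p^{-s}$ divides $\prod_{j=1}^q(1-\mathbf{N}\p^{-(A_js+B_j)})$. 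Combined with Proposition \ref{formula for the alpha_D}, this gives that bad local factors converge strictly to the right of $\alpha_\mc{D}$. This step is not cosmetic: the paper explicitly notes that the original formula for bad primes in \cite[Proposition 3.3]{dSG} is incorrect (cf.\ \cite[Remark 4.6]{AKOV2}), which is why it supplies its own proof via Stanley's theorem rather than simply citing \cite{dSG}. Your outline would need to supply an argument of this kind, or at least invoke Denef's rationality theorem for the rational-function shape and then a separate argument controlling the candidate poles.
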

\begin{remark}
It is not true in general that $b_\mc{D}=b_{\mc{D}'}$ in (5).
In fact,  consider the cone integral data $\mc{D}=(f_0,g_0,f_1,g_1,f_2,g_2,f_3,g_3)$ over $\mb{Q}$, with polynomials in $\mb{Q}[x_{11},x_{22},x_{12}]$,  where 
$$f_0=x_{11}x_{22},\ g_0=x_{11},\ f_1=x_{11},\ g_1=x_{12},\ f_2=x_{11}x_{22},\ g_2=x_{12}^2+x_{11}^2,\ f_3=x_{11},\ g_{3}=x_{22}.$$
Let $\mc{D}'$ be the same collection $\mc{D}$ viewed as cone integral data over $\mb{Q}[i]$. An easy computation shows that
$$Z_\mc{D}(s)=\zeta(s+2)L(\chi,s+2)\quad \mbox{and}\quad Z_{\mc{D}'}(s)=(\zeta(s+2)L(\chi,s+2))^2,$$
where $\chi:\mb{N}\to\mb{C}$ is the Dirichlet character given by $\chi(n)=1$ if $n\equiv 1\mod 4$, $\chi(n)=-1$ if $n\equiv -1\mod 4$ and $\chi(n)=0$ otherwise, and $L(s,\chi)$ is the associated $L$-function. In this example we have $\alpha_\mc{D}=\alpha_{\mc{D}'}=-1$, whereas $b_\mc{D}=1$ and $b_{\mc{D}'}=2$. One can show, in the notation of Section \ref{Sec: Proof of main theorem for rings}, that $Z_\mc{D}(s-2)$ is the ideal zeta function of $\mb{Z}[i]$, whereas $Z_{\mc{D}'}(s-2)$ is the $\mb{Z}[i]$-ideal zeta function of $\mb{Z}[i]\otimes_\mb{Z}\mb{Z}[i]$.
\end{remark}
Properties (1)-(4) in Theorem \ref{basic properties of cone integrals} were proved in \cite{dSG} in the case $K=\mb{Q}$. Notice that (1) also follows from a result of Denef \cite{Den}.
The general case and (5) are somehow implicit in the arguments of \cite{dSG}. 
The rest of this section is devoted to making this more precise. Let us first state an important corollary that will be used in our study of zeta functions of groups and rings.

\begin{corollary}\label{coro:basic properties of cone integrals}
Let $\mc{D}_1$ and $\mc{D}_2$ be two cone integral data over $K$.
\begin{enumerate}
\item If $Z_{\mc{D}_1}(s,\p)=Z_{\mc{D}_2}(s,\p)$ for almost all maximal ideals $\p\subset\mc{O}_K$, then $\alpha_{\mc{D}_1}=\alpha_{\mc{D}_2}$ and $b_{\mc{D}_1}=b_{\mc{D}_2}$.
\item If there exists a number field $K'$ including $K$ such that $Z_{\mc{D}_1'}(s-h,\p')=Z_{\mc{D}_2'}(s-h,\p')$ for almost all maximal ideals $\p'\subset\mc{O}_{K'}$, where $\mc{D}_i'$ denotes the same collection $\mc{D}$ viewed as cone integral data over $K'$, then $\alpha_{\mc{D}_1}=\alpha_{\mc{D}_2}$.
\end{enumerate}
\end{corollary}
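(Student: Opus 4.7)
My plan is to exploit the Euler-product structure together with the properties collected in Theorem~\ref{basic properties of cone integrals}.

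For part (1), since the local factors of $\mc{D}_1$ and $\mc{D}_2$ coincide outside a finite set $S$ of maximal ideals of $\mc{O}_K$, I would write $Z_{\mc{D}_1}(s) = Z_{\mc{D}_2}(s)\cdot R(s)$, where $R(s)$ collects the finitely many remaining local quotients at $\p\in S$ (enlarging $S$ if necessary so that it contains every $\p$ at which exactly one of the two constants $a_{\p,0}$ vanishes, still a finite set). By Theorem~\ref{basic properties of cone integrals}(1), $R$ is a finite product of rational functions in the variables $\mathbf{N}\p^{-s}$, and hence meromorphic on $\mb{C}$. Since the abscissa of convergence of a Dirichlet series with non-negative coefficients is governed by its tail, only the common infinite subproduct over $\p\notin S$ matters, and $\alpha_{\mc{D}_1}=\alpha_{\mc{D}_2}=:\alpha$ should drop out immediately.

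To upgrade this to $b_{\mc{D}_1}=b_{\mc{D}_2}$, I would verify that $R$ is holomorphic and non-zero at $s=\alpha$. The expansion of each local factor $Z_{\mc{D}_i}(s,\p)$ in powers of $\mathbf{N}\p^{-s}$ has non-negative coefficients (the integrand is non-negative), and by Theorem~\ref{basic properties of cone integrals}(2) the abscissa of convergence of each local factor is strictly less than $\alpha$. Evaluating at the real point $s=\alpha$ therefore gives a strictly positive real number, so each factor of $R$ is holomorphic and non-zero there. Multiplying a meromorphic function by something holomorphic and non-vanishing at a point preserves the order of pole, yielding $b_{\mc{D}_1}=b_{\mc{D}_2}$.

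For part (2), I would first observe that the shift $h$ is cosmetic: the substitution $t=s-h$ transforms the hypothesis into $Z_{\mc{D}_1'}(t,\p')=Z_{\mc{D}_2'}(t,\p')$ for almost all maximal ideals $\p'\subset\mc{O}_{K'}$. Applying part (1) over the larger field $K'$ to $\mc{D}_1'$ and $\mc{D}_2'$ then yields $\alpha_{\mc{D}_1'}=\alpha_{\mc{D}_2'}$, and Theorem~\ref{basic properties of cone integrals}(5) supplies the equalities $\alpha_{\mc{D}_i}=\alpha_{\mc{D}_i'}$ for $i=1,2$. Chaining these gives $\alpha_{\mc{D}_1}=\alpha_{\mc{D}_2}$.

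The step that requires the most care is the positivity claim $R(\alpha)\neq 0$ underlying the order-of-pole assertion in part (1). It rests on simultaneously exploiting the non-negativity of all the coefficients $a_{\p,i}$ coming from the $p$-adic integral representation and the \emph{strict} inequality in Theorem~\ref{basic properties of cone integrals}(2) that guarantees convergence of each individual local factor at $s=\alpha$. Once this positivity is secured, the remainder is routine bookkeeping on Euler products and meromorphic continuations.
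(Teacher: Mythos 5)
Your proposal is correct and follows the same route as the paper: part (1) is exactly the "immediate consequence of Theorem \ref{basic properties of cone integrals}(2)" that the paper invokes, with your discussion of $R(s)$ and the strict positivity of $R(\alpha)$ simply unpacking why the discarded/added finite set of local factors cannot shift the abscissa or the pole order; and part (2) is verbatim the paper's chain of reductions via Theorem \ref{basic properties of cone integrals}(5) and part (1) applied over $K'$.
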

\begin{proof}
Note that (1) is an immediate consequence of Theorem \ref{basic properties of cone integrals}(2). As for (2), Theorem \ref{basic properties of cone integrals}(5) implies first that  
$\alpha_{\mc{D}_i}=\alpha_{\mc{D}_i'}$, and (1) shows that $\alpha_{\mc{D}_1'}=\alpha_{\mc{D}_2'}$. It follows that $\alpha_{\mc{D}_1}=\alpha_{\mc{D}_2}$.
\end{proof}

\medskip

From now on, we follow \cite[Sections 2, 3, 4]{dSG} with slight modifications in the notation. The interested reader may also consult \cite{Den1} for further details.
Let $K^o$ be a number field, let $\mc{O}^o=\mc{O}_{K^o}$, and let $\mc{D}^o=(f_0,g_0,\ldots,f_l,g_l)$ be a cone integral data over $K^o$.
Let $(Y^o,h^o)$ be a resolution for the polynomial $F=\prod_{i=0}^l f_i g_i\in K^o[x_1,\ldots,x_m]$ over $K^o$. Thus $Y^o$ is a closed subscheme of some projective space over $\mathbb{A}_{K^o}^m$, say $Y^o\subset\mathbb{A}_{K^o}^m\times_{K^o}\mathbb{P}_{K^o}^k$,  $h^o$ is the restriction to $Y^o$ of the projection $\mathbb{A}_{K^o}^m\times_{K^o}\mathbb{P}_{K^o}^k\to\mathbb{A}_{K^o}^m$, and the following holds:
	\begin{enumerate}[(i)]
	\item $Y^o$ is smooth over $\opn{Spec}(K^o)$;
	\item $h^o$ is an isomorphism over $\mathbb{A}_{K^o}^m\setminus V(F)$, where $V(F)\subset \mb{A}_{K^o}^m$ is the vanishing set of $F$;
	\item the reduced scheme $((h^o)^{-1}(V(F)))_{\textrm{red}}$ associated to $(h^o)^{-1}(V(F))$ has only normal crossings as subscheme of $Y^o$. 
\end{enumerate}

Let $\{E_\iota^o: \iota\in T\}$ be the set of irreducible components of $((h^o)^{-1}(V(F)))_{\text{red}}$.  These, with the structure of reduced subscheme, are smooth hypersurfaces of $Y^o$ by (iii).  For each $\iota\in T$,  let  $N_\iota(f_j)$ and $N_\iota(g_j)$ be, respectively, the multiplicities of $E_\iota^o$ in the divisor of $f_j\circ h$ and $g_j\circ h$ ($j=0,1,\ldots,l$), and let $\nu_\iota-1$ be the multiplicity of $E_\iota^o$ in the divisor of ${(h^o)}^*(dx_1\wedge\cdots \wedge dx_m)$.

We next define
\begin{align}\label{rational polyhedral cone}
	\overline{D_T}:=\left\{u\in\mathbb{R}_{\geq 0}^T:\sum_{\iota\in T} N_\iota(f_j)u(\iota)\leq \sum_{\iota\in T}N_\iota(g_j)u(\iota),\ \mbox{for}\ j=1,\ldots,l\right\},
\end{align}
and for a subset $I\subset T$ we define
\begin{align*}
D_I&:=\{u\in\overline{D_T}: u(\iota)=0\ \mbox{if and only if } \iota\in T\setminus I\},
\end{align*}
so we have $\overline{D_T}=\bigcup_{I\subset T} D_I$, a disjoint union.

Note that $\overline{D_T}$ is a rational convex polyhedral cone, so there are integral generators $\e_1,\ldots,\e_q\in\mathbb{N}_0^T\cap\overline{D_T}$ for its extremal edges such that $\mathbb{N}_0^T\cap \mathbb{R}_{\geq 0}\e_i=\mathbb{N}_0\e_i$ for $i=1,\ldots,q$. The following constants will be important: 
	\begin{align}\label{definition of A_k and B_k} 
		A_k:=\sum_{\iota\in T}\mathbf{e}_k(\iota)N_\iota(f_0)\in\mathbb{N}_0,\quad  \ B_k:=\sum_{\iota\in T} \mathbf{e}_k(\iota)(N_\iota(g_0)+\nu_\iota)\in\mathbb{N},\quad k=1,\ldots,q.
	\end{align}
	
The cone $\overline{D_T}$ has a simplicial decomposition, say $\overline{D_T}=R_0\cup R_1\cup\cdots\cup R_q\cup\cdots\cup R_w$, such that $R_0=\{0\}$, $R_i=\mathbb{R}_{>0}\e_i$ for $i=1,\ldots,q$, and $\dim R_i>1$ for $i>q$. For each $k=0,\ldots,w$ there is a subset $M_k\subset \{1,\ldots,q\}$ such that
\begin{align*}
R_k=\left\{\sum_{j\in M_k} \alpha_j\mathbf{e}_j: \alpha_j>0\ \mbox{for all } j\in M_k\right\}.
\end{align*}

Now for each $I\subset T$ there is a subset $W_I\subset \{0,\ldots,w\}$ such that $D_I=\bigcup_{k\in W_I} R_k$. 
Since $\overline{D_T}$ is the disjoint union of the $D_I$'s, $\{0,1,\ldots,w\}$ is also the disjoint union of the $W_I$'s; thus, for each $k=0,\ldots,w$, there is a unique subset  $I\subset T$ such that $k\in W_I$, and this subset will be denoted by $I_k$.

\begin{proposition}\label{formula for good primes}
Let $K$ be a number field including $K^o$, let $\mc{O}=\mc{O}_K$, and let $\mc{D}$ be the same collection $\mc{D}^o$ viewed as cone integral data over $K$. 
If $\mathfrak{p}\subset\mc{O}$ is a maximal ideal such that $(Y^o,h^o)$ has good reduction modulo $\p^o:=\p\cap\mc{O}^o\subset\mc{O}^o$, then
\begin{align}\label{eq:formula for good primes}
Z_{\mc{D}}(s,\p)=\sum_{k=0}^w(\mathbf{N}\p-1)^{|I_k|}{\mathbf{N}\p}^{-m}c_{\p,I_k}\prod_{j\in M_k}\frac{\mathbf{N}\p^{-(A_j s+B_j)}}{1-\mathbf{N}\p^{-(A_j s+B_j)}},
\end{align}
where for a subset $I\subset T$,  $c_{\p,I}:=|\{ a\in\overline{Y^o}(\mc{O}/\p) : a\in \overline{E_\iota^o}(\mc{O}/\p)\ \mbox{if and only if } \iota\in I\}|$, and for a closed subscheme
$Z^o\subset Y^o$, $\overline{Z^o}$ denotes its reduction modulo $\p^o$.
\end{proposition}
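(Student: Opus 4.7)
The plan is to imitate the calculation of \cite{dSG} for the case $K^o=\mb{Q}$, verifying that the argument goes through verbatim when the ground field is replaced by an arbitrary number field $K\supset K^o$ and good reduction is preserved. First I would base-change the resolution $(Y^o,h^o)$ along $\Spec(\mc{O}_\p)\to\Spec(\mc{O}^o)$. Since $\p$ is of good reduction for $(Y^o,h^o)$, the resulting $(Y,h)$ over $\mc{O}_\p$ is smooth, $h$ is proper and birational, and the divisor $h^{-1}(V(F))$ has strict normal crossings on the special fiber, with irreducible components $E_\iota$ obtained by base change of $E_\iota^o$, preserving the multiplicities $N_\iota(f_j)$, $N_\iota(g_j)$ and $\nu_\iota-1$. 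By the $p$-adic change of variables formula (valid for any non-archimedean local field of characteristic $0$), $h:Y(\mc{O}_\p)\to\mc{O}_\p^m$ is a measure-preserving bijection outside a set of measure zero, and in particular
\begin{align*}
Z_{\mc{D}}(s,\p)=\int_{h^{-1}(\mc{M}(\mc{D},\p))}|f_0\circ h|_\p^{s}\,|g_0\circ h|_\p\,|\on{Jac}(h)|_\p\,|d\y|_\p.
\end{align*}

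Next, I would decompose $Y(\mc{O}_\p)$ via the reduction map $\pi:Y(\mc{O}_\p)\to\bar{Y}(\mc{O}/\p)$. For each residue point $\bar{a}\in\bar{Y}(\mc{O}/\p)$, let $I(\bar{a})\subset T$ be the set of indices $\iota$ such that $\bar{a}\in\overline{E_\iota^o}(\mc{O}/\p)$. Using the normal crossings property, one can choose local coordinates $(y_1,\ldots,y_m)$ in a formal/analytic neighborhood of $\bar{a}$ and an injection $\iota\mapsto j(\iota)$ from $I(\bar{a})$ into $\{1,\ldots,m\}$ so that each $E_\iota$ is locally cut out by $y_{j(\iota)}=0$, and so that
\begin{align*}
|f_j\circ h|_\p=\prod_{\iota\in I(\bar a)}|y_{j(\iota)}|_\p^{N_\iota(f_j)},\qquad |g_j\circ h|_\p=\prod_{\iota\in I(\bar a)}|y_{j(\iota)}|_\p^{N_\iota(g_j)},
\end{align*}
and $|\on{Jac}(h)|_\p=\prod_{\iota\in I(\bar a)}|y_{j(\iota)}|_\p^{\nu_\iota-1}$, up to units. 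Writing $u(\iota):=\on{ord}_\p(y_{j(\iota)})$ for $\iota\in I(\bar a)$ and setting $u(\iota)=0$ otherwise, the membership condition for $\mc{M}(\mc{D},\p)$ translates precisely to $u\in\overline{D_T}$, and the stricter condition $\bar{a}\in\overline{E_\iota^o}(\mc{O}/\p)\Longleftrightarrow\iota\in I(\bar a)$ forces $u(\iota)\geq 1$ for $\iota\in I(\bar{a})$ and $u(\iota)=0$ for $\iota\notin I(\bar a)$, i.e.\ $u\in D_{I(\bar a)}\cap\mb{N}_0^T$. Integrating over the fibre $\pi^{-1}(\bar a)$ (a polydisc of total measure $\mathbf{N}\p^{-m}$, with the $m-|I(\bar a)|$ ``free'' coordinates contributing a factor $\mathbf{N}\p^{-(m-|I(\bar a)|)}$ and each nonzero $y_{j(\iota)}$ contributing $(\mathbf{N}\p-1)\mathbf{N}\p^{-1}\cdot\mathbf{N}\p^{-u(\iota)}$ at valuation $u(\iota)\geq 1$) and summing over $\bar{a}$ with fixed $I(\bar a)=I$ yields
\begin{align*}
Z_{\mc{D}}(s,\p)=\sum_{I\subset T}(\mathbf{N}\p-1)^{|I|}\mathbf{N}\p^{-m}c_{\p,I}\sum_{u\in D_I\cap\mb{N}_0^T}\mathbf{N}\p^{-\sum_{\iota\in T}u(\iota)(N_\iota(f_0)s+N_\iota(g_0)+\nu_\iota)}.
\end{align*}

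Finally, I would use the simplicial decomposition $\overline{D_T}=\bigsqcup_{k=0}^w R_k$, which refines the stratification $\overline{D_T}=\bigsqcup_I D_I$ via $D_I=\bigsqcup_{k\in W_I}R_k$, to rewrite the inner double sum as $\sum_{k=0}^w\sum_{u\in R_k\cap \mb{N}_0^T}(\cdots)$. Because $\e_j$ generates $\mb{N}_0^T\cap\mb{R}_{\geq 0}\e_j$ as a monoid and $\{\e_j\}_{j\in M_k}$ is a $\mb{Z}$-linearly independent subset, the lattice points in the relative interior $R_k$ are exactly $\{\sum_{j\in M_k}\alpha_j\e_j:\alpha_j\in\mb{N}\}$, so the sum over $u\in R_k\cap\mb{N}_0^T$ factorises into a product of geometric series indexed by $j\in M_k$. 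Using the definitions \eqref{definition of A_k and B_k} of $A_j$ and $B_j$, each such factor equals $\mathbf{N}\p^{-(A_js+B_j)}/(1-\mathbf{N}\p^{-(A_js+B_j)})$, and collecting everything gives the formula \eqref{eq:formula for good primes}.

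The main obstacle I expect is the bookkeeping in the second paragraph: verifying that the local coordinate choice is compatible with the normal crossings/good reduction data uniformly over all residue points $\bar a$, and checking that the ``hidden'' unit factors in the local expressions of $f_j\circ h$, $g_j\circ h$ and the Jacobian truly have $\p$-adic norm $1$ (so that they disappear from the integral). Once this is cleanly set up, the identification of the cone condition with $u\in\overline{D_T}$ and the subsequent summation are straightforward.
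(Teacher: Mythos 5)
Your proposal follows essentially the same route as the paper (which simply refers the reader to the proof of \cite[Corollary 3.2]{dSG} and notes the modifications needed after base change). Two points deserve attention, however.

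First, you describe the base-changed hypersurfaces as ``irreducible components $E_\iota$ obtained by base change of $E_\iota^o$.'' This is exactly the imprecision the paper explicitly warns against: after extending the field, $E_\iota$ is still a smooth hypersurface, but it need \emph{not} be irreducible. The saving grace, as the paper points out, is that the computation never uses irreducibility --- only that the $E_\iota$ are smooth, meet with normal crossings, and carry the divisor and Jacobian multiplicities $N_\iota(f_j)$, $N_\iota(g_j)$, $\nu_\iota-1$ obtained over $K^o$. You should therefore phrase the decomposition of $h^{-1}(V(F))$ in terms of the base-changed collection $(E_\iota)_{\iota\in T}$ rather than the irreducible components of the divisor.

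Second, and more substantively, the step in which you conclude that $R_k\cap\mb{N}_0^T=\{\sum_{j\in M_k}\alpha_j\e_j:\alpha_j\in\mb{N}\}$ is not justified by what you cite. Primitivity of each $\e_j$ together with $\mb{Z}$-linear independence of $\{\e_j\}_{j\in M_k}$ does \emph{not} imply that the generators form a $\mb{Z}$-basis of the lattice they span; for instance, $\e_1=(1,0)$ and $\e_2=(1,2)$ are primitive and linearly independent, yet $(1,1)$ lies in the relative interior of the cone they generate without being a positive integer combination of them. For the inner sum to factor into a product of pure geometric series $\prod_{j\in M_k}\mathbf{N}\p^{-(A_js+B_j)}/(1-\mathbf{N}\p^{-(A_js+B_j)})$, the simplicial decomposition of $\overline{D_T}$ must be \emph{unimodular} (equivalently, for each $k$ the set $\{\e_j\}_{j\in M_k}$ is a basis of the sublattice of $\mb{Z}^T$ it spans over $\mb{R}$). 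This is part of the standing hypotheses in the construction of the decomposition in \cite{dSG}, and your proof needs to invoke it rather than deduce it from linear independence.
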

Though this proposition is more general than \cite[Corollary 3.2]{dSG}, its proof is the same, so we do not repeat it. However, we mention some comments.
Let $(Y,h)$ and $(E_\iota)_{\iota\in T}$ be the base changes of $(Y^o,h^o)$ and $(E_\iota^o)_{\iota\in T}$ by $\Spec(K_\p)\to\Spec(K^o)$.
Note that each $E_{\iota}$ is a smooth hypersurface in $Y$,  but $(E_\iota)_{\iota\in T}$ is not necessarily the collection of irreducible components of $h^{-1}(V(F))$ as
 $E_\iota$ might not be irreducible. Nevertheless, it is easy to show that 
\begin{align*}
&\on{div}(f_j\circ h)=\sum_{\iota\in T} N_\iota(f_j) E_\iota\quad\mbox{and}\quad \on{div}(g_j\circ h)=\sum_{\iota\in T} N_\iota(g_j) E_\iota\quad \mbox{for } j=0,\ldots,l,\\
&\on{div}(h^*(dx_1\wedge\cdots\wedge dx_m))=\sum_{\iota\in T}(\nu_\iota-1)E_\iota,
\end{align*}
so one can argue as in \cite[Sections 2 and 3]{dSG} by using the collection $(E_\iota)_{\iota\in T}$ and not the irreducible components of $h^{-1}(V(F))$. In fact, this seems to be what is really done in \cite{dSG}.

The following consequence was also obtained at the end of \cite[Section 2]{dSG} from a different formula for $Z_\mc{D}(s,\p)$.
\begin{corollary}\label{a_p,0 term is non-zero}
Write each $Z_\mc{D}(s,\p)$ as a power series $\sum_{i=0}^\infty a_{\p,i}\mathbf{N}\p^{-is}$. Then, $a_{\p,0}\neq 0$ for almost all maximal ideals $\p\subset\mc{O}$.
\end{corollary}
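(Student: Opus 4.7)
The plan is to extract the constant term $a_{\p,0}$ directly from the explicit formula of Proposition \ref{formula for good primes}, which applies at every $\p$ of good reduction for the fixed resolution $(Y^o,h^o)$---and hence at all but finitely many $\p$. I would expand each factor in that formula as
\[
\frac{\mathbf{N}\p^{-(A_js+B_j)}}{1-\mathbf{N}\p^{-(A_js+B_j)}}=\sum_{i\geq 1}\mathbf{N}\p^{-i(A_js+B_j)},
\]
so that every summand of \eqref{eq:formula for good primes} becomes a power series in $\mathbf{N}\p^{-s}$ with non-negative rational coefficients (since $(\mathbf{N}\p-1)^{|I_k|}\geq 0$, $c_{\p,I_k}\geq 0$, and $\mathbf{N}\p^{-m}>0$). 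Consequently, each $a_{\p,i}$ is a sum of non-negative contributions, and to prove $a_{\p,0}>0$ it suffices to exhibit one summand whose contribution to $a_{\p,0}$ is strictly positive.

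I would then single out the index $k=0$. Because $R_0=\{0\}$, we have $M_0=\emptyset$, so the geometric product is the empty product $1$; because the zero vector belongs to $D_I$ only for $I=\emptyset$, we also have $I_0=\emptyset$. The $k=0$ summand therefore reduces to the honest constant $\mathbf{N}\p^{-m}c_{\p,\emptyset}$, whose contribution to $a_{\p,0}$ is precisely $\mathbf{N}\p^{-m}c_{\p,\emptyset}$. The problem is thus reduced to showing that $c_{\p,\emptyset}>0$ for almost all $\p$.

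Finally, I would interpret $c_{\p,\emptyset}$ as a point count on $\mathbb{A}^m\setminus V(F)$. By property (ii) of the resolution, $h^o$ restricts to an isomorphism $Y^o\setminus(h^o)^{-1}(V(F))\xrightarrow{\sim}\mathbb{A}_{K^o}^m\setminus V(F)$, which spreads out over a dense open of $\Spec\mc{O}^o$ and descends to $\mathbb{F}_{\p^o}$ at every $\p^o$ of good reduction. Hence $c_{\p,\emptyset}=\mathbf{N}\p^m-|V(F)(\mathbb{F}_\p)|$, and since $F$ is a non-zero polynomial of some degree $d$, an elementary induction on the number of variables yields $|V(F)(\mathbb{F}_\p)|\leq d\cdot\mathbf{N}\p^{m-1}$, so $c_{\p,\emptyset}>0$ whenever $\mathbf{N}\p>d$. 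Only finitely many $\p$ are thereby excluded. The sole mildly delicate point is the spreading-out step used to transfer the generic isomorphism to its reduction mod $\p$, but this is already implicit in the notion of good reduction adopted throughout the section.
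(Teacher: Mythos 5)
Your proof is correct and follows the same route as the paper: isolate the $k=0$ term of Proposition \ref{formula for good primes}, note $M_0=I_0=\emptyset$ so its contribution is $\mathbf{N}\p^{-m}c_{\p,\emptyset}$, and identify $c_{\p,\emptyset}$ with the count of $\mathbb{F}_\p$-points of $\mathbb{A}^m\setminus V(F)$. You spell out two points the paper leaves implicit (non-negativity of all coefficients, so the $k=0$ term cannot be cancelled, and the explicit degree bound $|V(F)(\mathbb{F}_\p)|\leq d\,\mathbf{N}\p^{m-1}$ in place of ``dimension arguments''), but the argument is essentially identical.
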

\begin{proof}
It is enough to show that $\alpha_{\p,0}\neq 0$ for almost all maximal ideals $\p\subset\mc{O}$ satysfying the hypothesis of Proposition \ref{formula for good primes}. We compute the term of the formula (\ref{eq:formula for good primes}) for $k=0$. Note that $M_0=\emptyset$ and that $I_0=\emptyset$, so the term for $k=0$ is $\mathbf{N}\p^{-m} c_{\p,\emptyset}=\mathbf{N}\p^{-m}|\{a\in \overline{Y^o}(\mc{O}/\p)\setminus \bigcup_{\iota\in T}\overline{E_\iota^o}(\mc{O}/\p)\}|=\mathbf{N}\p^{-m}|\mathbb{A}^m(\mc{O}/\p)\setminus V(\overline{F})|$, where $\overline{F}$ denotes the reduction of $F$ modulo $\p^0$ and
$V(\overline{F})\subset \mathbb{A}^m(\mc{O}/\p)$ denotes the zero set of $\overline{F}$. Note that by dimension arguments, $|\mathbb{A}^m(\mc{O}/\p)\setminus V(\overline{F})|>0$ for almost all $\p$. The proof follows as the term for $k=0$ contributes to $a_{\p,0}$. 
\end{proof} 
\medskip

Proposition \ref{formula for good primes} provides a formula for $Z_\mc{D}(s,\p)$ for almost all maximal ideals. A formula for the exceptional primes (when $K=K^o=\mb{Q}$) is given in \cite[Proposition 3.3]{dSG}, but this seems to be incorrect as it was pointed out in \cite[Remark 4.6]{AKOV2}. Nevertheless, Theorem \ref{basic properties of cone integrals} does not use this proposition but rather its consequence \cite[Corollary 3.4]{dSG}, which is correct as claimed also in \cite[Remark 4.6]{AKOV2}. We will provide a proof of this in Proposition \ref{formula for bad primes} below. We shall need the following result of Stanley (cf.\ \cite[Chapter 1]{St}), which is formulated  here in our special setting.
\begin{theorem}\label{Stanley theorem} Let $I\subset T$ be a non-empty subset, and let $\Phi_I\in\mathbb{Z}^{l\times I}$ be the matrix defined by
\begin{align*}
(\Phi_I)_{j,\iota}=N_\iota(g_j)-N_\iota(f_j),\quad j=1,\ldots,l,\ \iota\in I.
\end{align*}
Given a vector $v\in \mathbb{Z}^l$, we consider the generating series
\begin{align*}
F_{\Phi,{v}}((X_i)_{i\in I}):= \sum_{\substack{u\in \mathbb{N}_0^I:\ \Phi_I\cdot u\geq v}} X^{u},
\end{align*}
where $X^u=\prod_{\iota\in I} X_\iota^{u(\iota)}$ and $(X_\iota)_{\iota\in I}$ is a collection of variables.
If $\{u\in\mathbb{N}_0^I: \Phi\cdot u\geq v\}$ is non-empty, then $F_{\Phi,{v}}((X_i)_{i\in I})$ is a rational function whose denominator divides 
\begin{align*}
\prod_{j\in \cup_{k\in W_I} M_k} \left(1-\prod_{\iota\in  I}X^{\mathbf{e}_j(\iota)}\right).
\end{align*}
\end{theorem}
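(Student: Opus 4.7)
The plan is to recognise $F_{\Phi, v}$ as the generating function of the integer points of the rational polyhedron $P_v := \{u \in \mathbb{R}^I_{\geq 0} : \Phi_I u \geq v\}$, and then to invoke the classical Stanley rationality theorem for such generating functions (\cite[Ch.\ 1]{St}), which says that if $P_v \cap \mathbb{Z}^I \neq \emptyset$ then $F_{\Phi,v}$ is rational with denominator controlled by the primitive generators of the extremal rays of the recession cone of $P_v$.

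First I would identify this recession cone: by definition it is $C := \{u \in \mathbb{R}^I_{\geq 0} : \Phi_I u \geq 0\}$, and viewing $\mathbb{R}^I$ as the coordinate subspace of $\mathbb{R}^T$ spanned by the indices in $I$, a direct check gives $C = \overline{D_T} \cap \mathbb{R}^I$. Since this intersection is precisely the face $\overline{D_I}$, the extremal rays of $C$ are those rays $\mathbb{R}_{\geq 0} \mathbf{e}_j$ with $\mathbf{e}_j$ supported on $I$. Because each $\mathbf{e}_j$ is already primitive (by the generating condition $\mathbb{N}_0^T \cap \mathbb{R}_{\geq 0}\mathbf{e}_j = \mathbb{N}_0 \mathbf{e}_j$ from just before \eqref{definition of A_k and B_k}), Stanley's theorem immediately gives rationality with a denominator dividing $\prod_{j : \mathbf{e}_j \in \overline{D_I}}(1 - X^{\mathbf{e}_j})$.

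Next I would use the simplicial decomposition of the cone to pin down the exact index set. Write $\overline{D_I} = \bigsqcup_{k \in W'_I} R_k$ where $W'_I := \{k : R_k \subset \overline{D_I}\} \supset W_I$. Decompose $P_v \cap \mathbb{Z}^I$ according to which open simplicial cell $R_k$ contains the translate, and within each $\overline{R_k}$ use the standard factorisation of the lattice-point generating function of a rational simplicial cone (fundamental parallelepiped plus free $\mathbb{N}_0$-combinations of the generators $\mathbf{e}_j$, $j \in M_k$), obtaining a contribution with denominator dividing $\prod_{j \in M_k}(1 - X^{\mathbf{e}_j})$. Summing these pieces, the total denominator a priori divides $\prod_{j \in \bigcup_{k \in W'_I} M_k}(1 - X^{\mathbf{e}_j})$; the translation by a bounded polytopal part contributes only a polynomial numerator.

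The remaining combinatorial point — and the step I expect to require the most care — is to upgrade $W'_I$ to $W_I$. The key observation is that $\bigcup_{k \in W_I} M_k = \{j : \mathbf{e}_j \in \overline{D_I}\}$: if $\mathbf{e}_j$ has support contained in $I$, then its extremal ray lies in $\overline{D_I}$ and hence is an edge of some top-dimensional simplicial cell $R_k$ of $D_I$, which forces $k \in W_I$ and $j \in M_k$; conversely, every $\mathbf{e}_j$ with $j \in M_k$ for $k \in W_I$ lies in $\overline{R_k} \subset \overline{D_I}$. Granting this, the contributions from lower-dimensional cells $R_k$ (with $k \in W'_I \setminus W_I$) only use generators that are already present in some $M_{k'}$ with $k' \in W_I$, so the denominator divides $\prod_{j \in \bigcup_{k \in W_I} M_k}(1 - X^{\mathbf{e}_j})$, exactly as claimed. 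The hardest part is thus this combinatorial bookkeeping; the analytic content reduces entirely to the classical Stanley theorem.
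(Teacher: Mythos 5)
The paper does not actually give a proof of this theorem---it is stated as a known result and cited to Stanley's book---so you are filling in an argument the paper leaves implicit. Your overall strategy (identify $F_{\Phi,v}$ as the lattice-point generating function of the rational polyhedron $P_v$ with recession cone $\overline{D_I}$, invoke Stanley's rationality theorem, and refine the denominator via the given simplicial decomposition) is the right one, and the first two steps are sound: $\overline{D_I}$ really is a face of $\overline{D_T}$ whose extremal rays are among the $\mathbf{e}_j$, and the decomposition into cells $R_k$ with $k\in W_I'$ gives a denominator dividing $\prod_{j\in\cup_{k\in W_I'}M_k}(1-X^{\mathbf{e}_j})$.

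The gap is in the last step, the ``upgrade'' from $W_I'$ to $W_I$. You assert that $\bigcup_{k\in W_I}M_k=\{j:\mathbf{e}_j\in\overline{D_I}\}$, arguing that any $\mathbf{e}_j$ with support in $I$ ``is an edge of some top-dimensional simplicial cell $R_k$ of $D_I$.'' This presupposes that $D_I\neq\emptyset$, but the theorem's hypothesis (that $\{u\in\mathbb{N}_0^I:\Phi_I u\geq v\}$ is non-empty) does not guarantee it. For a concrete failure, take $T=\{1,2,3\}$, $\overline{D_T}=\{u\in\mathbb{R}^3_{\geq 0}:u_3\geq u_2\}$, $I=\{1,2\}$, $v=0$: then $\overline{D_I}=\mathbb{R}_{\geq 0}\mathbf{e}_1$ with $\mathbf{e}_1=(1,0,0)$, but no point of $\overline{D_T}$ has support exactly $\{1,2\}$, so $D_I=\emptyset$, $W_I=\emptyset$, and the asserted product is empty, yet $F_{\Phi,0}=(1-X_1)^{-1}$ is not a polynomial. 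In other words, the identity you need holds only when $D_I$ is non-empty (equivalently, when the face $\overline{D_I}$ does not coincide with a lower face $\overline{D_J}$, $J\subsetneq I$); and even then the claim that every ray in $\overline{D_I}$ is a face of a top-dimensional cell of the restricted fan, and that each such cell lies in $D_I$, deserves a short argument of its own rather than being stated as obvious. It is worth noting that the paper's own ``Observe that'' remark after the theorem makes the same unproved assertion, and that in the sole application (Proposition~\ref{formula for bad primes}) only the weaker inclusion into $\prod_{j=1}^q(1-\mathbf{N}\p^{-(A_js+B_j)})$ is used---which your $W_I'$-bound already delivers---so the cleanest fix is to state and prove the theorem with $W_I'$ in place of $W_I$, or to add the hypothesis $D_I\neq\emptyset$.
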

Here the inequality $\Phi_I\cdot u\geq v$ means component-wise. 
Observe that $\{\mathbf{e}_j: j\in \cup_{k\in W_I} M_k\}$ is just the set of integral generators of the extremal edges of the cone $\{u\in\mathbb{N}_0^I: \Phi_I\cdot u\geq 0\}$. These vectors can be though of as vectors in $\mathbb{N}_0^I$ since they are zero outside $I$.

\begin{proposition}\label{formula for bad primes}
Let $K$ be a number field including $K^o$, let $\mc{O}=\mc{O}_K$, and let $\mc{D}$ be the collection $\mc{D}^o$ viewed as cone integral data over $K$. If $\p\subset\mc{O}$ is any maximal ideal, then $Z_{\mc{D}}(s,\p)$ is a rational function in $\mathbf{N}\p^{-s}$ with rational coefficients, and its denominator divides 
$$\prod_{j=1}^q(1-\mathbf{N}\p^{-(A_js+B_j)}).$$
In particular, the abscissa of convergence of $Z_{\mc{D}}(s,\p)$ is either $-\infty$ or one of the rational numbers $-\frac{B_j}{A_j}$ where $j=1,\ldots,q$ and $A_j\neq 0$. 
\end{proposition}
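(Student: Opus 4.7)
The plan is to mimic the resolution-and-Stanley strategy of \cite{dSG} used in the good-reduction case (Proposition \ref{formula for good primes}), but carried out directly over $K_\p$ instead of the residue field, so that no hypothesis on the reduction of $(Y^o,h^o)$ is needed. First I would observe that $h^o$, being the restriction to a closed subscheme of the proper projection $\mb{A}^m_{K^o}\times\mb{P}^k_{K^o}\to\mb{A}^m_{K^o}$, is proper, and hence so is its base change $h\colon Y\to\mb{A}^m_{K_\p}$; in particular $h^{-1}(\mc{O}_\p^m)\cap Y(K_\p)$ is compact, and $h$ induces a measure-preserving bijection with $\mc{O}_\p^m$ off a set of measure zero (since $h$ is an isomorphism off $V(F)$).

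Next, because $Y$ is smooth over $K_\p$ and $\bigcup_{\iota\in T}E_\iota$ has only normal crossings, around each $y_0\in Y(K_\p)$ I can find a $p$-adic analytic chart $U\simeq\mc{O}_\p^m$ with coordinates $(y_1,\ldots,y_m)$ such that the $E_\iota$ passing through $y_0$ become coordinate hyperplanes $\{y_\iota=0\}$ and, writing $I=\{\iota\in T:y_0\in E_\iota\}$,
\[
f_j\circ h=u_j^{(f)}\prod_{\iota\in I}y_\iota^{N_\iota(f_j)},\quad g_j\circ h=u_j^{(g)}\prod_{\iota\in I}y_\iota^{N_\iota(g_j)},\quad h^*(dx_1\wedge\cdots\wedge dx_m)=u\prod_{\iota\in I}y_\iota^{\nu_\iota-1}\,dy,
\]
with $u_j^{(f)},u_j^{(g)},u$ non-vanishing on $U$ (hence of constant $\p$-adic valuation after shrinking). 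By compactness, finitely many such neighborhoods $U^{(1)},\ldots,U^{(N)}$ suffice. On each $U^{(a)}$ the cone conditions $\on{ord}_\p(f_j(\x))\le\on{ord}_\p(g_j(\x))$ translate into $(\Phi_I u)_j\ge v^{(a)}_j$ for some $v^{(a)}\in\mb{Z}^l$, and decomposing $U^{(a)}$ by the valuations $u(\iota):=\on{ord}_\p(y_\iota)\in\mb{N}_0$ ($\iota\in I$) gives
\[
Z_\mc{D}(s,\p)\big|_{U^{(a)}}=C_a(1-\mathbf{N}\p^{-1})^{|I|}\sum_{\substack{u\in\mb{N}_0^I\\ \Phi_I u\ge v^{(a)}}}\prod_{\iota\in I}\mathbf{N}\p^{-u(\iota)(N_\iota(f_0)s+N_\iota(g_0)+\nu_\iota)}
\]
times a constant coming from the free coordinates $\iota\notin I$.

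Applying Theorem \ref{Stanley theorem} with $X_\iota=\mathbf{N}\p^{-(N_\iota(f_0)s+N_\iota(g_0)+\nu_\iota)}$ shows each of these generating series is a rational function whose denominator divides $\prod_{j\in\bigcup_{k\in W_I}M_k}(1-\mathbf{N}\p^{-(A_js+B_j)})$, which divides $\prod_{j=1}^q(1-\mathbf{N}\p^{-(A_js+B_j)})$. Summing the finitely many contributions yields the rationality of $Z_\mc{D}(s,\p)$ in $\mathbf{N}\p^{-s}$ with the stated global denominator; rationality of the coefficients follows from the explicit form of the terms. Finally, since each $A_j\ge 0$ (as $\mathbf{e}_j$ and the $N_\iota(f_0)$ are non-negative), each factor $(1-\mathbf{N}\p^{-(A_js+B_j)})^{-1}$ converges for real $s>-B_j/A_j$ when $A_j>0$, and is a constant when $A_j=0$, so the abscissa of convergence is either $-\infty$ or some $-B_j/A_j$ with $A_j\ne 0$. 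The main obstacle I anticipate is the bookkeeping in step two: verifying that the shift vector $v^{(a)}$ introduced by the units $u_j^{(f)},u_j^{(g)}$ does not worsen the denominator predicted by Stanley's theorem, and handling the fact that, for bad primes, there is no residue-field count $c_{\p,I}$ to absorb constants---which is why we obtain only a divisibility statement on the denominator rather than an explicit closed form.
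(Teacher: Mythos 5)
Your proposal follows essentially the same route as the paper: decompose $h^{-1}(\mc{O}_\p^m)\subset Y(K_\p)$ into finitely many $p$-adic coordinate charts on which $f_j\circ h$, $g_j\circ h$, and $h^*(dx_1\wedge\cdots\wedge dx_m)$ become monomials times units of constant valuation, convert the integral on each chart into a lattice-point generating series, and invoke Theorem \ref{Stanley theorem}. The obstacle you flag at the end --- whether the shift vector $v^{(a)}$ coming from the unit valuations could worsen the denominator --- is not actually an issue, since Stanley's theorem as stated already yields a denominator that is independent of $v$.
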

\begin{proof}
The same notation and comments after Proposition \ref{formula for good primes} apply here. Now $h$ induces a morphism of analytic manifolds $Y({K}_{\p})\to {K}_{\p}^m$ that we still denote by $h$. This is an isomorphism above $\{\x\in {K}_{\p}^m: F(\x)\neq 0\}$. Write $Y({\mc{O}}_{\p})$ for $h^{-1}(\mc{O}_{\p}^m)$. 
Then $Y({\mc{O}}_{\p})$ can be expressed as a disjoint union of a finite number of coordinate charts, say $\{(U_b, (y_1,\ldots,y_m)): b\in\mc{B}\}$, such that for each $b\in\mc{B}$ the following holds (see \cite[Section 2]{Mo}):
\begin{enumerate}
\item The image of $(y_1,\ldots,y_m): U_b\to {K}_{\p}^m$ is exactly $(\p^{e_b}\mc{O}_{\p})^m$ for some $e_b\in\mathbb{N}_0$. 
\item Let $I_b=\{\iota\in T: E_\iota(K_\p)\cap U_b\neq \emptyset\}$. Then there exists an injective function $\mu_b:I_b\to\{1,\ldots,m\}$ such that $E_{\iota}(K_\p)\cap U_b=\{y_{\mu(\iota)}=0\}$ for all $\iota\in I_b$.
\item There are non-negative integers $c_b(f_j)$, $c_b(g_j)$ for $j=0,\ldots,l$ and $d_b$, such that the following holds on $U_b$:
\begin{align*}
|f_j\circ h|_\p&=\mathbf{N}\p^{-c_b(f_j)}\prod_{\iota\in I_b}|y_{\mu(\iota)}|_\p^{N_\iota(f_j)},\\
|g_j\circ h|_\p&=\mathbf{N}\p^{-c_b(g_j)}\prod_{\iota\in I_b}|y_{\mu(\iota)}|_\p^{N_\iota(g_j)},\\
|h^*(dx_1\wedge\cdots\wedge dx_m)|_\p&= \mathbf{N}\p^{-d_b}\prod_{\iota\in I_b}|y_{\mu(\iota)}|_\p^{\nu_\iota-1}|dy_1\wedge\cdots \wedge dy_m|_\p.
\end{align*}
\end{enumerate}
It follows that $Z_{\mc{D}}(s,\p)=\sum_{b\in \mc{B}} J_b(s)$, where
\begin{align*}
J_b(s)=\int_{V_b} \mathbf{N}\p^{-c_b(f_0)s-c_b(g_0)-d_b}\prod_{\iota\in I_b} |y_{\mu_b(\iota)}|_\p^{N_\iota(f_0)s+N_\iota(g_0)+\nu_\iota-1}|d y_1\wedge\cdots\wedge d y_m|_\p
\end{align*}
and $V_b$ is the subset of $U_b$ defined by the conditions
\begin{align*}
c_b(g_j)-c_b(f_j)+\sum_{\iota\in I_b} (N_\iota(g_j)-N_\iota(f_j))\on{ord}_\p(y_{\mu_b(\iota)})\geq 0, \quad j=1,\ldots,l.
\end{align*}
If $I_b=\emptyset$, then clearly $J_b(s)=r\mathbf{N}\p^{-c_b(f_0)s}$ for some rational number $r$.
We now analyse $J_b(s)$ for $I_b\neq\emptyset$. 
By (1) we can assume that $U_b=(\p^{e_b}\mc{O}_\p)^m$ with coordinates $y_1,\ldots,y_m$. Let $\Phi_I=\Phi_{I_b}$ be the matrix of Theorem \ref{Stanley theorem}, let  $c_b(f)-c_b(g)\in\mathbb{Z}^{l}$ denote the vector whose $j$-entry is $c_b(f_j)-c_b(g_j)$, and let $v:=c_b(f)-c_b(g)-\Phi\cdot (e_b)_{\iota\in T}\in\mb{Z}^l$.
It follows that
\begin{align*}
J_b(s)&=(1-\mathbf{N}\p^{-1})^{|I_b|}\cdot {\mathbf{N}\p^{-c_b(f_0)s-c_b(g_0)-d_b-e_b(m-|I_b|)}}\cdot \sum_{\substack{u\in(e_b\mathbb{N}_0)^{I_b}\\ \Phi\cdot u\geq c_b(f)-c_b(g)}}\prod_{\iota\in I_b} \mathbf{N}\p^{-(sN_\iota(f_0)+N_\iota(g_0)+\nu_\iota)u(\iota)}\\
&=(\mathbf{N}\p^{e_b}-\mathbf{N}\p^{e_b-1})^{|I_b|}\cdot {\mathbf{N}\p^{-c_b(f_0)s-c_b(g_0)-d_b-e_bm-\sum_{\iota\in I_b}(sN_\iota(f_0)+N_\iota(g_0)+\nu_\iota)e_b}}\\
&\quad\cdot \sum_{\substack{u\in\mathbb{N}_0^{I_0}\\ \Phi\cdot u\geq v}}\prod_{\iota\in I_b} \mathbf{N}\p^{-(sN_\iota(f_0)+N_\iota(g_0)+\nu_\iota)u(\iota)}.
\end{align*}
By Theorem \ref{Stanley theorem} this is a rational function in $\mathbf{N}\p^{-s}$ with rational coefficients, and whose denominator divides 
\begin{align*}
\prod_{j\in \bigcup_{k\in W_{I_b}} M_k} \left(1-\prod_{\iota\in I_b} \mathbf{N}\p^{-(sN_\iota(f_0)+N_\iota(g_0)+\nu_\iota)\e_j(\iota)}\right)&=\prod_{j\in \bigcup_{k\in W_{I_b}} M_k} \left(1-\prod_{\iota\in T} \mathbf{N}\p^{-(sN_\iota(f_0)+N_\iota(g_0)+\nu_\iota)\e_j(\iota)}\right)\\
&=\prod_{j\in \bigcup_{k\in W_{I_b}} M_k} \left(1-\mathbf{N}\p^{-(sA_j+B_j)}\right).
\end{align*}
This completes the proof of the proposition.
\end{proof}

With Proposition \ref{formula for good primes}, Corollary \ref{a_p,0 term is non-zero} and Proposition \ref{formula for bad primes}, the proof of properties (1)-(4) in Theorem \ref{basic properties of cone integrals} goes now exactly as in \cite[Section 4]{dSG}. In particular, one obtains the following formula for $\alpha_\mc{D}$ which is independent of the field extension $K\supset K_0$, and which proves (5) in Theorem \ref{basic properties of cone integrals}.
\begin{proposition}\label{formula for the alpha_D}
$\alpha_\mc{D}=\max\left\{\frac{1-B_j}{A_j}: j=1,\ldots, l, A_j\neq 0\right\}$. 
\end{proposition}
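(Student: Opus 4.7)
The plan is to derive this closed-form expression by combining the explicit formulas from Propositions \ref{formula for good primes} and \ref{formula for bad primes}, following the strategy of \cite[Section 4]{dSG} but now in the setting of an arbitrary number field $K \supset K^o$. The first step is to rewrite, at each good prime $\p$, the normalized local factor in the shape
$$
a_{\p,0}^{-1}Z_\mc{D}(s,\p) \;=\; \prod_{j=1}^{q}\frac{1}{1-\mathbf{N}\p^{-(A_j s+B_j)}} \cdot P_\p(s),
$$
where $P_\p(s)$ is a rational function in $\mathbf{N}\p^{-s}$ whose terms collect the numerators $(\mathbf{N}\p-1)^{|I_k|}\mathbf{N}\p^{-m}c_{\p,I_k}$ from the simplicial decomposition $\overline{D_T}=\bigcup_k R_k$. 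Consequently, the Euler product factors, up to a finite contribution from bad primes, as
$$
Z_\mc{D}(s) \;=\; \left(\prod_{j=1}^{q}\zeta_K(A_j s+B_j)\right)\cdot \prod_{\p}P_\p(s),
$$
and the abscissa of convergence of the first factor is exactly $\max\{(1-B_j)/A_j : A_j\neq 0\}$, by the standard analytic properties of Dedekind zeta functions.

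The second step is to show that the auxiliary Euler product $\prod_\p P_\p(s)$ converges on a strictly larger half-plane, so that it does not contribute to the abscissa. This rests on two ingredients: the $k=0$ term of the formula (\ref{eq:formula for good primes}) precisely accounts for $a_{\p,0}$, as observed in Corollary \ref{a_p,0 term is non-zero}, so after normalization the contribution is $1$ plus lower-order tails; and Lang-Weil estimates applied to the smooth varieties $\overline{Y^o}$ and the smooth hypersurfaces $\overline{E_\iota^o}$ yield $c_{\p,I_k}=\mathbf{N}\p^{m-|I_k|}+O(\mathbf{N}\p^{m-|I_k|-1/2})$, forcing each non-trivial summand of $P_\p(s)-1$ to carry an extra power of $\mathbf{N}\p^{-\eta}$ for some fixed $\eta>0$. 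A standard Euler product comparison then yields absolute convergence of $\prod_\p P_\p(s)$ for $\on{Re}(s)>\max\{(1-B_j)/A_j\}-\delta$. The finitely many bad primes are handled via Proposition \ref{formula for bad primes}, which guarantees that their local factors are rational with denominators dividing $\prod_{j=1}^q(1-\mathbf{N}\p^{-(A_js+B_j)})$, producing only a finite analytic factor.

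The lower bound comes from Landau's theorem applied to the Dirichlet series with non-negative coefficients $Z_\mc{D}(s)$: its abscissa of convergence must be a singular point, which by the factorization above is exactly the rightmost pole $s=\max\{(1-B_j)/A_j\}$ of the Dedekind product. The main obstacle will be the uniform control of the error factor $\prod_\p P_\p(s)$: one must check that combinatorial cancellations among the simplicial pieces $R_k$, encoded in the rationality of $F_{\Phi_I,v}$ from Theorem \ref{Stanley theorem}, are sufficient to force every surviving term to decay as $\mathbf{N}\p^{-(A_js+B_j+\eta)}$ uniformly in $\p$. This is precisely the content of the estimates carried out in \cite[Section 4]{dSG}, and, crucially, since the exponents $A_j, B_j$ and the simplicial data $(R_k, I_k, M_k)$ are intrinsic to the resolution $(Y^o, h^o)$ over $K^o$ and do not depend on the extension $K$, the resulting formula for $\alpha_\mc{D}$ is manifestly invariant under base change, thereby establishing Theorem \ref{basic properties of cone integrals}(5) as a simultaneous consequence.
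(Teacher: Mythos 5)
Your overall strategy is a fair reconstruction of what the paper intends (the paper itself gives no proof here, deferring entirely to \cite[Section 4]{dSG}), and you correctly isolate the crucial point for base-change invariance: the numerical data $A_j, B_j$ are read off the resolution $(Y^o,h^o)$ over $K^o$ and are therefore unchanged when $\mc{D}$ is viewed over an extension $K$. But the factorization you rely on has a genuine flaw. You factor out $\prod_{j=1}^q \zeta_K(A_js+B_j)$ over \emph{all} extremal edges, and then claim its abscissa of convergence is $\max\{(1-B_j)/A_j : A_j\neq 0\}$. This fails whenever some edge has $A_j=0$ and $B_j=1$: the corresponding factor is $\zeta_K(1)=\infty$, so the product of Dedekind zeta functions never converges and the putative factorization $Z_\mc{D}(s)=\prod_j\zeta_K(A_js+B_j)\cdot\prod_\p P_\p(s)$ has no analytic content. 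The case $A_j=0, B_j=1$ is not excluded by the setup (it occurs, e.g., whenever an exceptional divisor $E_\iota$ with $N_\iota(f_0)=N_\iota(g_0)=0$ and $\nu_\iota=1$ supplies a coordinate extremal ray), so the argument as written would fail. The repair is to factor out only those $j$ with $A_j>0$, absorbing the $A_j=0$ edges into the local error factor, but then the estimate showing $\prod_\p P_\p(s)$ converges on a strictly larger half-plane has to be redone with more care; the version of du Sautoy--Grunewald avoids the exact factorization altogether and instead bounds the Euler product above and below using positivity of the coefficients.

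The lower bound is also more delicate than your Landau argument suggests. Landau gives that the abscissa of convergence of $Z_\mc{D}(s)$ is a singularity of $Z_\mc{D}(s)$; it does not, by itself, identify this singularity with the rightmost pole of the Dedekind product, since a priori $\prod_\p P_\p(s)$ could vanish there or fail to continue. To get $\alpha_\mc{D}\geq\max\{(1-B_j)/A_j : A_j\neq 0\}$ one must argue directly, for instance: for a maximizing index $j_0$ with associated edge $R_{k_0}$, use a Lang--Weil \emph{lower} bound on $c_{\p,I_{k_0}}$ to show that for real $\sigma$ below the threshold each normalized local factor exceeds $1+c\,\mathbf{N}\p^{-(A_{j_0}\sigma+B_{j_0})}$ for a positive constant $c$ and almost all $\p$, so the Euler product diverges. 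Note also that your Lang--Weil estimate $c_{\p,I_k}=\mathbf{N}\p^{m-|I_k|}+O(\mathbf{N}\p^{m-|I_k|-1/2})$ is overstated: the stratum $\bigcap_{\iota\in I_k}E_\iota\setminus\bigcup_{\iota\notin I_k}E_\iota$ need not be non-empty or geometrically connected, so the leading coefficient can be any non-negative integer (possibly $0$); this is harmless for the upper bound but matters precisely for the lower bound, where one must know the stratum for the maximizing edge is non-empty. Apart from these points (and noting the statement's index range ``$j=1,\ldots,l$'' should read ``$j=1,\ldots,q$''), your sketch of the combinatorial cancellations, which you defer to \cite[Section 4]{dSG}, is the right picture.
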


  \section{Zeta functions of rings as cone integrals}\label{Sec: Proof of main theorem for rings}

\noindent Let $R$ be a commutative ring with identity. By {\em an $R$-algebra} we shall mean an $R$-module $L$ endowed with an $R$-bilinear map $L\times L\to L$ called multiplication, e.g.\ an $R$-Lie algebra. 
 An $R$-subalgebra of $L$ is an $R$-submodule $A$ that is closed under multiplication. 
Left, right and two-sided $R$-ideals are defined similarly. To simplify the presentation, by an $R$-ideal we shall refer to a left $R$-ideal. Nevertheless, everything we say about left $R$-ideals is also valid for right $R$-ideals and for two-sided $R$-ideals. 
To allow applications such as Theorem \ref{main result: Conjecture} we shall also consider $R$-algebras with an identity, and in that case we will require that the $R$-subalgebras contain the identity. A $\mb{Z}$-algebra will be also called {\em a ring}.

Let $L$ be an $R$-algebra (or an $R$-algebra with identity). The $R$-subalgebra zeta function and the $R$-ideal zeta function of $L$ are by definition the formal series 
$$\zeta_L^{\s_R}(s)=\sum_{L'\leq_R L}[L:L']^{-s}\quad\mbox{and}\quad \zeta_L^{\n_R}(s)=\sum_{L'\lhd_R L}[L:L']^{-s}$$
where $L'$ runs only over those $R$-subalgebras or $R$-ideals of finite additive index respectively. We write $\zeta_L^{*_R}(s)$ to address both type of zeta functions simultaneously. We will suppress the subindex $R$ when $R=\mb{Z}$.

\medskip

Let $K$ be a number field and let $\mc{O}=\mc{O}_K$. 
   Let $L$ be an $\mc{O}$-algebra (or an $\mc{O}$-algebra with identity) whose underlying $\mc{O}$-module is free of rank $h\geq 1$.
      For a maximal ideal $\p\subset \mc O$ we write ${L}_{\p}:=L\otimes_\mc{O}{\mc O}_\p$. This is an $\mc{O}_\p$-algebra and it is easy to show that
  $$
  \zeta_{L}^{*_{\mc{O}}}(s)=\prod_{\substack{\p\subset \mc{O}\\ \textrm{maximal}}}\zeta_{{L}_\p}^{*_{\mc{O}_\p}}(s).
  $$ 
We now fix a basis for $L$ as $\mc{O}$-module and hence identify $L$ with $\mc{O}^h$. Let 
$(c_{ij}^k)$ be the structure coefficients of $L$ with respect to the canonical basis $\{e_1,\ldots,e_h\}$, that is, $e_i\cdot e_j=\sum_{k=1}^h c_{ij}^k e_k$.
We denote by $\on{Tr}_h(\mc{O}_\p)$ the set of upper-triangular matrices with entries in $\mc{O}_\p$ and write $|dm|_\p$ for the normalized additive Haar measure of $\on{Tr}_h(\mc{O}_\p)$.
Given a matrix $m=(m_{ij})$ we denote by $m'=(m_{ij}')$ its adjoint.  
\begin{proposition}\label{expression of zeta functions as cone integrals}
For each maximal ideal $\p\subset\mc{O}$ it holds that	
  \begin{align*}
	\zeta_{{L}_\p}^{*_{\mc{O}_\p}}(s)=(1-\mathbf{N}\p^{-h})^{-1}\int_{\mc{M}^*(\mc{O}_\p)} |m_{11}|_\p^{s-1}\cdots|m_{hh}|_\p^{s-h}|dm|_\p, 
\end{align*}
where $\mc{M}^\s(\mc{O}_\p)\subset\on{Tr}_h({\mc O}_\p)$ denotes the set of those upper-triangular matrices $(m_{ij})$ such that 
\begin{align*}
 \on{ord}_\p(m_{11}\cdots m_{hh})&\leq \on{ord}_\p\left(\sum_{t=1}^h\sum_{r=i}^h\sum_{s=j}^h m_{ir}c_{rs}^t m_{js} m'_{tk}\right)\quad \mbox{for all } i,j,k=1,\ldots,h,
\end{align*}
and $\mc{M}^\n(\mc{O}_\p)\subset \on{Tr}_h({\mc O}_\p)$ denotes the set of those upper-triangular matrices $(m_{ij})$  such that 
\begin{align*}
	\on{ord}_\p(m_{11}\cdots m_{hh})&\leq \on{ord}_\p\left(\sum_{s=1}^h\sum_{r=i}^h m_{ir}c_{rj}^s m'_{sk}\right)\quad \mbox{for all }i,j,k=1,\ldots,h.
	\end{align*}

If in addition $L$ is an $\mc{O}$-algebra with identity, say $1=(u_1,\ldots,u_h)$, then we also have to add the following extra conditions in the definition of $\mc{M}^\s(\mc{O}_\p)$:
\begin{align*}
\on{ord}_\p(m_{11}\cdots m_{hh})\leq \on{ord}_\p\left(\sum_{i=1}^h u_im_{ij}'\right)\quad \forall j=1,\ldots,h.
\end{align*}
\end{proposition}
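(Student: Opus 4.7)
The plan is to follow the classical Grunewald--Segal--Smith matrix-parametrization technique, adapted to the $\mc{O}$-algebra setting. First I would parametrize each finite-index $\mc{O}_\p$-submodule $M \subseteq L_\p = \mc{O}_\p^h$ by an upper-triangular basis matrix $m = (m_{ij}) \in \on{Tr}_h(\mc{O}_\p)$ whose $i$-th row $f_i := \sum_k m_{ik} e_k$ is the $i$-th basis element of $M$. Under this parametrization $[L_\p : M] = |\det m|_\p^{-1} = \prod_i |m_{ii}|_\p^{-1}$, and two upper-triangular bases give the same submodule if and only if they differ by left multiplication by an element of the group $U := \on{Tr}_h^*(\mc{O}_\p)$ of upper-triangular matrices with unit diagonal entries.

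Next, I would translate each algebraic closure condition into a cone inequality on $m$ by expanding in the basis and clearing denominators via the adjugate identity $m\, m' = (\det m)\, I$. For an $\mc{O}_\p$-subalgebra one writes
\[
f_i f_j = \sum_{r,s,t} m_{ir} m_{js} c_{rs}^t e_t
\]
and requires $f_i f_j \in M$; expressing $f_i f_j = \sum_k a_{ijk} f_k$ gives
\[
a_{ijk} = (\det m)^{-1} \sum_t \Bigl(\sum_{r,s} m_{ir} m_{js} c_{rs}^t\Bigr) m'_{tk},
\]
so the condition $a_{ijk} \in \mc{O}_\p$ is exactly the defining cone inequality of $\mc{M}^\s(\mc{O}_\p)$. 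The ideal condition $e_r \cdot f_j \in M$ for all $r,j$ is handled analogously but without the second $m$-factor, yielding the inequalities defining $\mc{M}^\n(\mc{O}_\p)$. In the unital case, $1 = (u_1, \ldots, u_h) \in M$ translates via $1 = \sum_i a_i f_i$ to $a = u \cdot m' / \det m$, and demanding $a_j \in \mc{O}_\p$ produces the stated extra inequalities.

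With the integrand set up over $\mc{M}^*(\mc{O}_\p)$, I would then convert the sum over submodules into the integral. Since the diagonal entry of $um$ equals $u_{ii} m_{ii}$ with $u_{ii} \in \mc{O}_\p^*$, the factor $|m_{ii}|_\p$ is $U$-invariant, and so is each cone inequality (each merely records whether $M$ is closed under the relevant products, which is a property of $M$). A block-wise computation of the Jacobian of $u \mapsto u m$ for fixed $m$ shows it equals $\prod_j |m_{jj}|_\p^{\,j}$, so a change of variables gives
\[
\int_{\mc{M}^*(\mc{O}_\p)} \prod_i |m_{ii}|_\p^{s-i}\, |dm|_\p \;=\; \on{vol}(U) \sum_{M} \prod_i |m_{ii}|_\p^{s} \;=\; \on{vol}(U) \cdot \zeta_{L_\p}^{*_{\mc{O}_\p}}(s),
\]
where $M$ ranges over the $\mc{O}_\p$-submodules satisfying the appropriate condition. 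Computing $\on{vol}(U)$ explicitly then identifies the stated prefactor.

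The main obstacle I expect is the careful bookkeeping to verify that the cone inequalities precisely capture the algebraic closure conditions rather than merely implying them: one must check (i) that the adjugate-based clearing of denominators produces the inequality uniformly in $(i,j,k)$, (ii) that restricting to upper-triangular bases does not omit any eligible submodule nor impose spurious constraints, and (iii) in the unital case, that the identity-containment inequalities interact correctly with the subalgebra inequalities. A useful sanity check is the abelian case $L = \mc{O}^h$ with trivial multiplication, in which all cone inequalities become vacuous and both sides must reduce to the known local Euler factor $\prod_{i=1}^h (1-\mathbf{N}\p^{-(s-i+1)})^{-1}$; this pins down the correct normalization constant.
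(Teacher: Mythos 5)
Your approach is the standard Grunewald--Segal--Smith parametrization, which is exactly what the paper's proof invokes by citing \cite[Theorem 5.5]{dSG} and \cite[Sec.~3]{GSS} and observing that it extends verbatim to $\mc{O}$-algebras and, via the $1\in A$ requirement, to the unital case. All the pieces you set out are correct: the Hermite-normal-form parametrization of finite-index $\mc{O}_\p$-submodules by $m\in\on{Tr}_h(\mc{O}_\p)$, the $U$-coset ambiguity for $U$ the upper-triangular matrices with diagonal in $\mc{O}_\p^{\times}$, the adjugate-clearing of denominators giving the cone inequalities, and the Jacobian $\prod_j|m_{jj}|_\p^{\,j}$ for $u\mapsto um$. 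One small convention point: your ideal condition $e_r\cdot f_j\in M$ encodes closure under left multiplication, while the indices in the stated formula $\sum_{s}\sum_{r}m_{ir}c_{rj}^{s}m'_{sk}$ correspond to $f_i\cdot e_j\in M$, i.e.\ right ideals; this is harmless since the paper explicitly says the left/right distinction is immaterial, but you should match whichever convention you want.

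The one step you did not actually carry out is the one that matters for comparing with the printed statement: the computation of $\on{vol}(U)$. Doing so gives
\[
\on{vol}(U)=\prod_{i<j}\on{vol}(\mc{O}_\p)\cdot\prod_{i}\on{vol}(\mc{O}_\p^{\times})=(1-\mathbf{N}\p^{-1})^{h},
\]
so the correct prefactor is $(1-\mathbf{N}\p^{-1})^{-h}$, not $(1-\mathbf{N}\p^{-h})^{-1}$ as printed in the proposition; the exponent has slipped off the $\mathbf{N}\p^{-1}$. Your own abelian sanity check would have caught this: with trivial multiplication the integral equals $\prod_{i=1}^{h}\frac{1-\mathbf{N}\p^{-1}}{1-\mathbf{N}\p^{-(s-i+1)}}$, and recovering $\zeta_{\mc{O}_\p^h}^{\s}(s)=\prod_{i=1}^{h}(1-\mathbf{N}\p^{-(s-i+1)})^{-1}$ forces the prefactor $(1-\mathbf{N}\p^{-1})^{-h}$. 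This agrees with \cite[Proposition 3.1]{GSS}, whose constant is $(1-p^{-1})^{-n}$. So your method is sound, but do not take the printed prefactor on faith: carry the volume computation to the end and record the corrected constant.
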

\begin{proof} 
The first part of the proposition when $K=\mb{Q}$ is proved in \cite[Theorem 5.5]{dSG} (see also \cite[Sec.\ 3]{GSS}). That proof can be easily extended to this general case. By following that proof in the case that $L$ is an $\mc{O}$-algebra with identity, we see that the conditions that we add to the definition of $\mc{M}^\s(\mc{O}_\p)$  are simply a translation of the condition that the $\mc{O}_\p$-submodule of $L_\p$ generated by the rows of the matrix $(m_{ij})$ contains $1
=(u_1,\ldots,u_h)$. This is necessary since we are requiring that the $R$-subalgebras of an $R$-algebra with identity must contain 1.
\end{proof} 
As an immediate consequence we obtain
\begin{corollary}\label{existence of the cone integral data}
Let $L$ be an $\mc{O}$-algebra (or an $\mc{O}$-algebra with identity) that is isomorphic to $\mc{O}^h$ as $\mc{O}$-module.
Then there exists a cone integral data $\mc{D}^{*}$ over $K$ such that
$$\zeta_{L}^{*_\mc{O}}(s)=Z_{\mc{D}^{*}}(s-h),$$
and such that for any finite extension $K'\supset K$, say with ring of integers $\mc{O}'$, we have
$$\zeta_{L'}^{*_{\mc{O}'}}(s)=Z_{\mc{D'}^*}(s-h),$$
where $L'$ is the $\mc{O}'$-algebra $L\otimes_\mc{O}\mc{O}'$ and $\mc{D'}^*$ is the same collection $\mc{D}^*$ viewed as cone integral data over $K'$. 
\end{corollary}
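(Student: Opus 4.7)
The plan is to read off the cone integral data directly from Proposition \ref{expression of zeta functions as cone integrals}. After fixing a free $\mc{O}$-basis of $L$, an upper-triangular matrix $m$ is parametrized by its $m_0:=h(h+1)/2$ entries $(m_{ij})_{i\leq j}$, which serve as the affine coordinates required by Definition \ref{definition of cone integrals}. Rewriting
\begin{align*}
\prod_{i=1}^h |m_{ii}|_\p^{s-i}=|m_{11}\cdots m_{hh}|_\p^{s-h}\cdot \prod_{i=1}^{h-1}|m_{ii}|_\p^{h-i},
\end{align*}
I would take $f_0:=m_{11}\cdots m_{hh}$ and $g_0:=\prod_{i=1}^{h-1}m_{ii}^{h-i}$ (with $g_0=1$ when $h=1$). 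For the remaining pairs $(f_r,g_r)$ with $r\geq 1$, I would set $f_r:=f_0$ and let $g_r$ run over the explicit polynomials in the entries of $m$, the cofactors $m'_{ij}$, and (in the unital case) the coordinates $u_i$ of the identity that appear on the right-hand sides of the $\on{ord}_\p$-inequalities defining $\mc{M}^*(\mc{O}_\p)$. Any pair whose $g_r$ vanishes identically imposes a vacuous condition and may be discarded. All the resulting polynomials lie in $\mc{O}[m_{ij}]\subset K[m_{ij}]$, so $\mc{D}^*$ is a genuine cone integral data over $K$, and by construction $Z_{\mc{D}^*}(s-h,\p)=(1-\mathbf{N}\p^{-h})\,\zeta_{L_\p}^{*_{\mc{O}_\p}}(s)$ for every maximal ideal $\p\subset\mc{O}$.

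The next step is to check that the normalized Euler product $Z_{\mc{D}^*}(s-h)=\prod_\p a_{\p,0}^{-1}Z_{\mc{D}^*}(s-h,\p)$ reproduces $\zeta_L^{*_\mc{O}}(s)$. Using $\zeta_{L_\p}^{*_{\mc{O}_\p}}(s)=1+O(\mathbf{N}\p^{-s})$ together with $\mathbf{N}\p^{-s}=\mathbf{N}\p^{-h}\cdot\mathbf{N}\p^{-(s-h)}$, a one-line expansion shows that the constant term of $Z_{\mc{D}^*}(s-h,\p)$, viewed as a power series in $\mathbf{N}\p^{-(s-h)}$, is exactly $1-\mathbf{N}\p^{-h}$. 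Hence $a_{\p,0}=1-\mathbf{N}\p^{-h}\neq 0$ at every maximal ideal, the normalization precisely cancels the $(1-\mathbf{N}\p^{-h})^{-1}$ prefactor, and taking the product over $\p$ yields $\zeta_L^{*_\mc{O}}(s)=Z_{\mc{D}^*}(s-h)$.

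For the second claim, the key observation is that the same $\mc{O}$-basis of $L$, tensored with $1$, is a free $\mc{O}'$-basis of $L'=L\otimes_\mc{O}\mc{O}'$ with the same structure constants $c_{ij}^k\in\mc{O}\subset\mc{O}'$ (and, in the unital case, the same identity coordinates $u_i$). Re-running the construction above for $L'$ therefore produces exactly the same list of polynomials, now read inside $K'[m_{ij}]$; by definition this list is $\mc{D'}^*$, and Proposition \ref{expression of zeta functions as cone integrals} applied to $L'$ gives $\zeta_{L'}^{*_{\mc{O}'}}(s)=Z_{\mc{D'}^*}(s-h)$.

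The whole argument is essentially a translation, so no individual deduction is genuinely difficult; the one point demanding care is the normalization check, i.e.\ verifying that the local prefactor $(1-\mathbf{N}\p^{-h})^{-1}$ in Proposition \ref{expression of zeta functions as cone integrals} is canceled at \emph{every} prime — not merely at almost all of them — by the common value $a_{\p,0}=1-\mathbf{N}\p^{-h}$, so that no stray factor appears between the two Euler products.
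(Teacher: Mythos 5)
Your construction is exactly what the paper has in mind when it calls Corollary \ref{existence of the cone integral data} an ``immediate consequence'' of Proposition \ref{expression of zeta functions as cone integrals}: the paper gives no further proof, and your filling-in of the details — reading off $f_0 = m_{11}\cdots m_{hh}$, $g_0 = \prod_{i=1}^{h-1}m_{ii}^{h-i}$, and $(f_r,g_r)=(f_0, g_r)$ from the $\on{ord}_\p$-inequalities, discarding identically-zero $g_r$'s, computing $a_{\p,0}=1-\mathbf{N}\p^{-h}$ at every $\p$, and observing that the structure constants $c_{ij}^k$ and identity coordinates $u_i$ are unchanged under $-\otimes_\mc{O}\mc{O}'$ — is correct and complete. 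This is the intended argument.
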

As a combination of this corollary and Theorem \ref{basic properties of cone integrals} we now obtain
\begin{corollary}\label{invariance of a_L under base extension} 
Let $L$ be an $\mc{O}$-algebra (or an $\mc{O}$-algebra with identity) that is isomorphic to $\mc{O}^h$ as $\mc{O}$-module. Assume that $\zeta_L^{*_\mc{O}}(s)$ is not a constant function, and let $\alpha_L^{*_{\mc{O}}}$ be its abscissa of convergence. Then the following holds.
\begin{enumerate} 
\item $\alpha_L^{*_{\mc{O}}}$ is a rational number and there exists $\delta>0$ such that $\zeta_L^{*_\mc{O}}(s)$ can be meromorphically continued to the region $\on{Re}(s)>\alpha_L^{*_{\mc{O}}}-\delta$. Moreover, the continued function is holomorphic on the line $\on{Re}(s)=\alpha_L^{*_{\mc{O}}}$ except at $s=\alpha_L^{*_{\mc{O}}}$ where it has a pole.
\item Let $K'$ be a number field including $K$, $\mc{O}'$ its ring of integers, and $L'=L\otimes_\mc{O}\mc{O}'$. 
Then $\zeta_{L}^{*_\mc{O}}(s)$ and $\zeta_{L'}^{*_{\mc{O}'}}(s)$ have the same abscissa of convergence.
\end{enumerate}
\end{corollary}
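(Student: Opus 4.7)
The plan is to reduce everything to the cone integral machinery of Section \ref{Integrales conicas}. First I would invoke Corollary \ref{existence of the cone integral data}: it furnishes cone integral data $\mc{D}^{*}$ over $K$ with $\zeta_{L}^{*_{\mc{O}}}(s) = Z_{\mc{D}^{*}}(s-h)$, and states that the same collection $\mc{D}'^{*}$ viewed as cone integral data over $K'$ satisfies $\zeta_{L'}^{*_{\mc{O}'}}(s) = Z_{\mc{D}'^{*}}(s-h)$. Since by hypothesis $\zeta_{L}^{*_{\mc{O}}}(s)$ is not constant, $Z_{\mc{D}^{*}}(s)$ is not constant either, and hence the full conclusion of Theorem \ref{basic properties of cone integrals} is available to us.

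For part (1), I would then apply items (2) and (3) of Theorem \ref{basic properties of cone integrals} directly to $Z_{\mc{D}^{*}}(s)$: rationality of $\alpha_{\mc{D}^{*}}$, existence of a meromorphic continuation to a half-plane $\on{Re}(s) > \alpha_{\mc{D}^{*}} - \delta$, and holomorphy of the continued function on the critical line $\on{Re}(s) = \alpha_{\mc{D}^{*}}$ apart from a pole at $s = \alpha_{\mc{D}^{*}}$. The substitution $s \mapsto s-h$ translates these three statements into the corresponding properties of $\zeta_{L}^{*_{\mc{O}}}(s)$, with $\alpha_{L}^{*_{\mc{O}}} = \alpha_{\mc{D}^{*}} + h$.

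For part (2), the key input is Theorem \ref{basic properties of cone integrals}(5): since $\mc{D}'^{*}$ is literally the same collection of polynomials as $\mc{D}^{*}$, one has $\alpha_{\mc{D}^{*}} = \alpha_{\mc{D}'^{*}}$. Combining this with the two identities of the first paragraph yields $\alpha_{L}^{*_{\mc{O}}} = \alpha_{\mc{D}^{*}} + h = \alpha_{\mc{D}'^{*}} + h = \alpha_{L'}^{*_{\mc{O}'}}$, which is the claim. I should also remark that the finiteness of $\alpha_{\mc{D}'^{*}}$ forces $Z_{\mc{D}'^{*}}$, and hence $\zeta_{L'}^{*_{\mc{O}'}}$, to be non-constant, so that $\alpha_{L'}^{*_{\mc{O}'}}$ is meaningfully defined.

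I do not expect any serious obstacle beyond what has already been built up: the substantive content lies in Theorem \ref{basic properties of cone integrals} and, in particular, in the explicit description of $\alpha_{\mc{D}}$ given in Proposition \ref{formula for the alpha_D}, which expresses the abscissa purely in terms of the integer data $(A_j,B_j)$ read off from a resolution over the base field $K^o$. These integers are geometric invariants that are preserved under base change by the very construction recalled in the previous section, and this is what ultimately underwrites the field-independence in (2). The only mild point of care is the transfer of the non-constancy hypothesis between $K$ and $K'$, which is automatic because constancy of an Euler product of cone integrals corresponds to $\alpha_{\mc{D}} = -\infty$, a condition that is itself invariant under the field extension by (5).
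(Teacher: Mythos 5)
Your proof is correct and follows exactly the route the paper intends: the corollary is stated in the paper as an immediate consequence of Corollary \ref{existence of the cone integral data} combined with Theorem \ref{basic properties of cone integrals}, and you invoke precisely items (2), (3) for part (1) and item (5) for part (2), with the shift $s\mapsto s-h$ handled correctly. Your side remark about the transfer of the non-constancy hypothesis (finiteness of $\alpha_{\mc{D}'^*}$ forcing $Z_{\mc{D}'^*}$ non-constant) is a sensible precaution that the paper leaves implicit.
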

Another consequence is
\begin{corollary}\label{invariance of a_L and b_L by commmensurability}
Let $L_1$ and $L_2$ be two $\mc{O}$-algebras (or $\mc{O}$-algebras with identity) that are isomorphic to $\mc{O}^h$ as $\mc{O}$-modules. Let $b_{L_i}^{*_\mc{O}}$ be the order of the pole of $\zeta_{L_i}^{*_\mc{O}}(s)$ at $s=\alpha_{L_i}^{*_\mc{O}}$. If $L_1\otimes_\mc{O}K$ and $L_2\otimes_\mc{O}K$ are isomorphic $K$-algebras, then $\alpha_{L_1}^{*_\mc{O}}=\alpha_{L_2}^{*_\mc{O}}$ and $b_{L_1}^{*_\mc{O}}=b_{L_2}^{*_\mc{O}}$.
\end{corollary}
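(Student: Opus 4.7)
The plan is to reduce the statement to Corollary \ref{coro:basic properties of cone integrals}(1) applied to the cone integral data furnished by Corollary \ref{existence of the cone integral data}. The bridge between the hypothesis (an isomorphism after tensoring with $K$) and the conclusion (equality of almost all local factors) is a standard spreading-out argument.

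First, fix $\mc{O}$-bases of $L_1$ and $L_2$, and let $\varphi\colon L_1\otimes_\mc{O}K\to L_2\otimes_\mc{O}K$ be the given $K$-algebra isomorphism. Expressing the images under $\varphi$ of the $\mc{O}$-basis of $L_1$ (and the images under $\varphi^{-1}$ of the $\mc{O}$-basis of $L_2$) in the chosen bases yields finitely many elements of $K$; let $d\in\mc{O}\setminus\{0\}$ be a common denominator. Then $\varphi$ restricts to an $\mc{O}[1/d]$-algebra isomorphism $L_1\otimes_\mc{O}\mc{O}[1/d]\cong L_2\otimes_\mc{O}\mc{O}[1/d]$ (in the identity case, $\varphi$ already sends $1\mapsto 1$, so no further adjustment is needed). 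Consequently, for every maximal ideal $\p\subset\mc{O}$ not dividing $d$, base-changing from $\mc{O}[1/d]$ to $\mc{O}_\p$ gives an isomorphism $(L_1)_\p\cong (L_2)_\p$ of $\mc{O}_\p$-algebras (with identity, if applicable).

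Next, since a $\mc{O}_\p$-algebra isomorphism induces a bijection between finite-index $\mc{O}_\p$-subalgebras (respectively, $\mc{O}_\p$-ideals) that preserves the additive index, it follows that
\begin{equation*}
\zeta_{(L_1)_\p}^{*_{\mc{O}_\p}}(s)=\zeta_{(L_2)_\p}^{*_{\mc{O}_\p}}(s)\quad\text{for almost all maximal ideals } \p\subset\mc{O}.
\end{equation*}
Now invoke Corollary \ref{existence of the cone integral data} to obtain cone integral data $\mc{D}_1^*$ and $\mc{D}_2^*$ over $K$ satisfying $\zeta_{L_i}^{*_\mc{O}}(s)=Z_{\mc{D}_i^*}(s-h)$; in particular, the equality of local factors of the $\zeta_{L_i}^{*_\mc{O}}$ translates to $Z_{\mc{D}_1^*}(s,\p)=Z_{\mc{D}_2^*}(s,\p)$ for almost all $\p$ (up to the nonzero normalizing constants $a_{\p,0}^{-1}$, which by Corollary \ref{a_p,0 term is non-zero} are well defined and equal at such $\p$).

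Finally, Corollary \ref{coro:basic properties of cone integrals}(1) yields $\alpha_{\mc{D}_1^*}=\alpha_{\mc{D}_2^*}$ and $b_{\mc{D}_1^*}=b_{\mc{D}_2^*}$. Since the shift $s\mapsto s-h$ preserves both the abscissa of convergence (up to the additive constant $h$, which is the same on both sides) and the order of the pole at the abscissa, we conclude $\alpha_{L_1}^{*_\mc{O}}=\alpha_{L_2}^{*_\mc{O}}$ and $b_{L_1}^{*_\mc{O}}=b_{L_2}^{*_\mc{O}}$. The only genuinely substantive step is the spreading-out argument in the first paragraph; the remainder is a direct assembly of the prior results.
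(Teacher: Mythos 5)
Your proposal is correct and follows the same route as the paper: spread out the $K$-algebra isomorphism to an isomorphism $(L_1)_\p\cong(L_2)_\p$ for almost all $\p$, conclude the local factors of the cone integrals agree for almost all $\p$, and then apply Corollary \ref{coro:basic properties of cone integrals}(1) via the cone integral data from Corollary \ref{existence of the cone integral data}. The only cosmetic difference is that the paper phrases the spreading-out step by observing that the isomorphism $\alpha\in\on{GL}_h(K)$ satisfies $\alpha,\alpha^{-1}\in\on{GL}_h(\mc{O}_\p)$ for almost all $\p$, whereas you clear denominators by inverting a single $d$; these are the same argument.
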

\begin{proof}
Let $\mc{D}_i^*$ be the cone integral data of Corollary \ref{existence of the cone integral data} for $L_i$. By Corollary \ref{basic properties of cone integrals}(1) it is enough to prove that $Z_{\mc{D}_1^*}(s-h,\p)=Z_{\mc{D}_2^*}(s-h,\p)$ for almost all maximal ideals $\p\subset\mc{O}$. This follows from the fact that ${L_1}_\p:=L_1\otimes_\mc{O}\mc{O}_\p$ and ${L_2}_\p:=L_2\otimes_\mc{O}\mc{O}_\p$ are isomorphic $\mc{O}_\p$-algebras for almost all $\p$. 
To prove this fact, we may assume that $L_1=L_2=\mc{O}^h$ as $\mc{O}$-modules and so $L_1\otimes_\mc{O}K=L_2\otimes_\mc{O}K=K^h$. Let $\alpha\in \on{GL}_h(K)$ be a $K$-algebra isomorphism $\alpha:L_1\otimes_\mc{O}K\to L_2\otimes_\mc{O}K$. Then for almost all maximal ideals $\p$ it holds that $\alpha,\alpha^{-1}\in\on{GL}_h(\mc{O}_\p)$, and hence $\alpha$ induces an isomorphism ${L_1}_\p\cong {L_2}_\p$.  This completes the proof.
\end{proof}

\medskip

In order to prove Theorem \ref{main theorem: general version} we shall need the following result.
\begin{lemma}\label{an algebraic geometry lemma}
Let $K$ be a field and let $A_1$ and $A_2$ be finite dimensional $K$-algebras.
If for some field extension ${K}'\supset K$ the $K'$-algebras $A_1\otimes_K{K}'$ and $A_2\otimes_K {K}'$ are isomorphic over $K'$, then this also holds for some finite extension ${K}'\supset K$.
\end{lemma}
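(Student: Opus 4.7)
The plan is to recast the existence of the isomorphism $A_1\otimes_K K'\cong A_2\otimes_K K'$ as the non-emptiness of an affine $K$-scheme of finite type, and then to invoke Hilbert's Nullstellensatz to descend from $K'$ to a finite extension of $K$. The key observation is that ``being an $R$-algebra isomorphism after base change by $K\to R$'' is encoded by finitely many polynomial equations (preservation of multiplication) together with one open condition (invertibility of the underlying $R$-linear map).

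First I would observe that the hypothesis forces $\dim_K A_1=\dim_K A_2=:n$ by comparing $K'$-dimensions after base change. Fix $K$-bases $(e_1,\ldots,e_n)$ of $A_1$ and $(f_1,\ldots,f_n)$ of $A_2$, with structure constants $c_{ij}^k$ and $d_{ij}^k$ respectively. Now consider the functor on commutative $K$-algebras $R\mapsto \mathrm{Isom}_{R\text{-alg}}(A_1\otimes_K R,\,A_2\otimes_K R)$. An $R$-linear map is encoded by a matrix $(x_{ij})\in M_n(R)$ via $e_i\otimes 1\mapsto \sum_j x_{ij}(f_j\otimes 1)$; compatibility with multiplication translates into a finite collection of polynomial equations in the $x_{ij}$ with coefficients in $K$ (depending on the $c_{ij}^k$ and $d_{ij}^k$), while invertibility is the open condition $\det(x_{ij})\ne 0$. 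Thus this functor is represented by a principal open subscheme $X$ of a closed subscheme of $\mathbb{A}^{n^2}_K$; in particular $X$ is an affine $K$-scheme of finite type.

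The hypothesis provides a $K'$-point of $X$, so $X$ is non-empty as a $K$-scheme. Writing $X=\mathrm{Spec}(A)$ with $A$ a nonzero finitely generated $K$-algebra, any maximal ideal $\mathfrak{m}\subset A$ has, by Hilbert's Nullstellensatz, residue field $K'':=A/\mathfrak{m}$ that is a finite extension of $K$; the corresponding $K''$-point of $X$ is exactly a $K''$-algebra isomorphism $A_1\otimes_K K''\cong A_2\otimes_K K''$. I expect no serious obstacle: the only content is to verify that the defining conditions of the isomorphism functor are polynomial (transparent once bases are fixed) and to apply the Nullstellensatz to produce a closed point with finite residue field. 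A more elementary route would adjoin to $K$ the entries of a fixed isomorphism matrix and of its inverse to obtain a finitely generated intermediate field, and then specialize at a closed point of the corresponding affine variety; this avoids explicit schemes but essentially reproves the same Nullstellensatz step.
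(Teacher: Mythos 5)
Your argument is essentially the paper's own proof: both realize the isomorphisms as the points of a finite-type affine $K$-scheme (yours as a principal open in a closed subscheme of $\mathbb{A}^{n^2}_K$, the paper's as a closed subscheme of $\operatorname{GL}_h\times_{\mathbb{Z}}\operatorname{Spec}(K)$ — the same thing), note non-emptiness from the given $K'$-point, and extract a closed point whose residue field is a finite extension of $K$ by the Nullstellensatz. The level of detail differs slightly but the route is the same.
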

\begin{proof}
Note that $A_1$ and $A_2$ have necessarily the same dimension over $K$, hence we can assume that $A_1=A_2=K^h$ as vector spaces over $K$ for some $h$.
Let $I\subset \on{GL}_h\times_\mb{Z}\Spec(K)$ be the subfunctor such that if ${K}'$ is any commutative algebra over $K$ with identity, then $I({K}')$ is the set of ${K}'$-algebra isomorphisms $A_1\otimes_K {K}'\to A_2\otimes_K {K}'$.
It is easy to see that $I$ is represented by a closed subscheme of $\on{GL}_h\times_\mb{Z}\Spec(K)$.
The hypothesis implies that $I$ is not the empty scheme. Therefore, if ${K}'$ is the residue field at a closed point of $I$, which is a finite extension of $K$, we have $I({K}')\neq\emptyset$.
\end{proof}
Theorem \ref{main theorem: general version} follows from the following
\begin{theorem}
Let $K$ be a number field and $\mc{O}$ its ring of integers. Let $L_1$ and $L_2$ be two $\mc{O}$-algebras (or $\mc{O}$-algebras with identity) that are isomorphic to $\mc{O}^h$ as $\mc{O}$-modules for some $h>0$. If $L_1\otimes_\mc{O} {K}'$ and $L_2\otimes_\mc{O}{K}'$ are isomorphic ${K}'$-algebras for some field extension ${K}'\supset K$, then  $\zeta_{L_1}^{*_\mc{O}}(s)$ and $\zeta_{L_2}^{*_\mc{O}}(s)$ have the same abscissa of convergence.
\end{theorem}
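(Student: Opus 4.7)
The plan is to chain together the three results established just above: the algebraic geometry lemma (Lemma \ref{an algebraic geometry lemma}), the base-extension invariance of the abscissa (Corollary \ref{invariance of a_L under base extension}(2)), and the commensurability-type invariance (Corollary \ref{invariance of a_L and b_L by commmensurability}). The hypothesis gives an isomorphism over some possibly huge field extension $K'\supset K$, and the conclusion needs to compare two zeta functions defined over $\mc{O}$. The bridge between these two worlds will be a well-chosen number field.

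First I would apply Lemma \ref{an algebraic geometry lemma} with $A_i=L_i\otimes_\mc{O} K$, which are finite dimensional $K$-algebras of dimension $h$. The lemma upgrades the given isomorphism $A_1\otimes_K K'\cong A_2\otimes_K K'$ over the arbitrary extension $K'\supset K$ to an isomorphism over some \emph{finite} extension of $K$. Since any finite extension of a number field is again a number field, I may assume without loss of generality that $K'/K$ is a finite extension of number fields; let $\mc{O}'$ be the ring of integers of $K'$ and set $L_i':=L_i\otimes_\mc{O}\mc{O}'$, an $\mc{O}'$-algebra (with identity, in that case) that is free of rank $h$ over $\mc{O}'$.

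Next I would invoke Corollary \ref{invariance of a_L under base extension}(2) to conclude that, for each $i=1,2$, the zeta functions $\zeta_{L_i}^{*_\mc{O}}(s)$ and $\zeta_{L_i'}^{*_{\mc{O}'}}(s)$ have the same abscissa of convergence, i.e.\ $\alpha_{L_i}^{*_\mc{O}}=\alpha_{L_i'}^{*_{\mc{O}'}}$. (I should also handle the degenerate case in which $\zeta_{L_i}^{*_\mc{O}}(s)$ is constant separately, but this poses no difficulty since constancy is preserved under passage between $L_i$ and $L_i'$ via the Euler product over the primes of $\mc{O}$ vs.\ those of $\mc{O}'$, or can be verified directly from the Euler product decomposition.) Now by construction we have
\begin{equation*}
L_1'\otimes_{\mc{O}'} K'=(L_1\otimes_\mc{O}\mc{O}')\otimes_{\mc{O}'}K'\cong L_1\otimes_\mc{O} K'\cong L_2\otimes_\mc{O}K'\cong L_2'\otimes_{\mc{O}'}K',
\end{equation*}
so the hypothesis of Corollary \ref{invariance of a_L and b_L by commmensurability} (applied with base ring $\mc{O}'$ and field $K'$) is satisfied for $L_1'$ and $L_2'$. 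That corollary then yields $\alpha_{L_1'}^{*_{\mc{O}'}}=\alpha_{L_2'}^{*_{\mc{O}'}}$. Combining the two equalities gives $\alpha_{L_1}^{*_\mc{O}}=\alpha_{L_2}^{*_\mc{O}}$, which is the desired conclusion.

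The main obstacle, conceptually, is that the supplied isomorphism lives over an arbitrary field extension rather than over a number field, so one cannot pass directly to integral structures. This is resolved precisely by Lemma \ref{an algebraic geometry lemma}, which realises $\on{Isom}(A_1,A_2)$ as a closed subscheme of $\on{GL}_h$ over $K$ and exploits the fact that a non-empty $K$-scheme of finite type has a closed point with finite residue field extension. Everything else is a straightforward reassembly of Corollaries \ref{invariance of a_L under base extension} and \ref{invariance of a_L and b_L by commmensurability}, both of which have already been extracted from the cone integral machinery of Theorem \ref{basic properties of cone integrals}.
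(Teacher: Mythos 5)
Your proposal follows exactly the paper's own argument: reduce to a finite extension via Lemma \ref{an algebraic geometry lemma}, pass from $\mc{O}$ to $\mc{O}'$ by Corollary \ref{invariance of a_L under base extension}(2), and compare $L_1'$ and $L_2'$ over $\mc{O}'$ using Corollary \ref{invariance of a_L and b_L by commmensurability}. The reasoning and the chain of citations match the paper's proof, with only some extra spelled-out detail (the tensor identifications and the degenerate constant-series aside).
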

\begin{proof}
By Lemma \ref{an algebraic geometry lemma} we can assume that the field ${K}'$ of the hypothesis is a finite extension of $K$. 
Let ${\mc{O}}'$ be its ring of integers and let $L_i'=L_i\otimes_\mc{O}\mc{O}'$.
By Corollary \ref{invariance of a_L under base extension}, $\zeta_{L_i}^{*_\mc{O}}(s)$ and $\zeta_{L_i'}^{*_{\mc{O}'}}(s)$ have the same abscissa of convergence for $i=1,2$. 
By Corollary \ref{invariance of a_L and b_L by commmensurability}, $\zeta_{L_1'}^{*_{\mc{O}'}}(s)$ and $\zeta_{L_2'}^{*_{\mc{O}'}}(s)$ have the same abscissa of convergence since $L_1'\otimes_{\mc{O}'}K'=L_2'\otimes_{\mc{O}'}K'$ as $K'$-algebras. It follows that $\zeta_{L_1}^{*_\mc{O}}(s)$ and $\zeta_{L_2}^{*_\mc{O}}(s)$ have the same abscissa of convergence. 
\end{proof}

We now explain how Theorem \ref{main theorem} follows from Theorem \ref{main theorem: general version}.
Let $\mf{N}$ be a unipotent group scheme over $\mb{Q}$ and let $N$ be an arithmetic group of $\mf{N}$. We define $\mf{n}$ to be the Lie algebra of $\mf{N}$, which is a nilpotent Lie algebra of dimension $h(N)$ over $\mb{Q}$. 
\begin{proposition}\label{the Mal'cev correspondence}
Let $L$ be any Lie subring of $\mf{n}$ additively isomorphic to $\mb{Z}^h$, with $h=h(N)$, such that $L\otimes_\mb{Z}\mb{Q}=\mf{n}$. Then 
$\zeta_N^*(s)$ and $\zeta_L^*(s)$ have the same abscissa of convergence, that is $\alpha_N^*=\alpha_L^*$. In addition, $b_N^*=b_L^*$.
\end{proposition}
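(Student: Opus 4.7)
The plan is to show that the local Euler factors of $\zeta_N^*(s)$ and $\zeta_L^*(s)$ coincide at all but finitely many primes, and then to convert that local agreement into equality of the invariants by means of the cone integral machinery of Section \ref{Integrales conicas}. For the first step I would invoke the Mal'cev correspondence at the level of pro-$p$ completions. Pick a finite-index subgroup $N_0\leq N$ adapted to a Mal'cev basis of $\mf{n}$, so that $L_0:=\log N_0$ is a $\mb{Z}$-Lie lattice in $\mf{n}$ with $L_0\otimes_\mb{Z}\mb{Q}=\mf{n}$. For every prime $p$ exceeding the nilpotency class of $\mf{n}$, the Baker--Campbell--Hausdorff formula converges integrally on $L_0\otimes_\mb{Z}\mb{Z}_p$ and yields a pro-$p$ group isomorphic to $\hat{N}_{0,p}$; under this bijection, closed subgroups (resp.\ normal subgroups) of finite index correspond to $\mb{Z}_p$-Lie subrings (resp.\ ideals) of finite additive index. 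Hence $\zeta_{N_0,p}^*(s)=\zeta_{L_0,p}^*(s)$ for almost all $p$, and since $N_0$ has finite index in $N$ the same equality holds with $N$ in place of $N_0$.

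Any two full $\mb{Z}$-Lie lattices of $\mf{n}$ are commensurable, so there exists $m\in\mb{N}$ with $mL\subseteq L_0$ and $mL_0\subseteq L$; for every prime $p\nmid m$ this forces $L\otimes_\mb{Z}\mb{Z}_p=L_0\otimes_\mb{Z}\mb{Z}_p$ and therefore $\zeta_{L,p}^*(s)=\zeta_{L_0,p}^*(s)$. Combining with the Mal'cev identification gives $\zeta_{N,p}^*(s)=\zeta_{L,p}^*(s)$ for almost every prime $p$. I then write $\zeta_N^*(s)=R(s)\cdot\zeta_L^*(s)$, where $R(s)$ is the finite product of ratios of local factors over the exceptional primes. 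Each such ratio is a non-zero rational function in $p^{-s}$, so $R(s)$ and $R(s)^{-1}$ extend to meromorphic functions on $\mb{C}$. By Corollary \ref{existence of the cone integral data} one has $\zeta_L^*(s)=Z_{\mc{D}^*}(s-h)$ for a suitable cone integral datum $\mc{D}^*$ over $\mb{Q}$, and Theorem \ref{basic properties of cone integrals}(2) places the abscissa of convergence of each individual $\zeta_{L,p}^*(s)$ strictly to the left of $\alpha_L^*$; the same bound applies to each exceptional $\zeta_{N,p}^*(s)$ since it too is a rational function in $p^{-s}$. Hence $R(s)$ is holomorphic and nowhere vanishing on an open neighbourhood of $\{\on{Re}(s)\geq \alpha_L^*\}$, from which $\alpha_N^*=\alpha_L^*$ and $b_N^*=b_L^*$ follow at once.

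The only substantive step is the local identification $\zeta_{N,p}^*(s)=\zeta_{L,p}^*(s)$ for almost every $p$; this is classical (essentially \cite[Prop.~3]{GSS}) and amounts to tracing through the integral BCH formula together with the commensurability of full lattices in $\mf{n}$. Everything else is a formal application of the cone integral results collected in Section \ref{Integrales conicas}.
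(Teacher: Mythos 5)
Your overall strategy mirrors the paper's: reduce to the identity of local factors at almost all primes and then appeal to the cone-integral machinery. The local identification via the integral BCH formula and lattice commensurability is sound, and it is what \cite[Theorem~4.1]{GSS} together with Corollary~\ref{invariance of a_L and b_L by commmensurability} package in the paper's proof.

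The gap is in the concluding step. You assert that the abscissa of convergence of each exceptional local factor $\zeta_{N,p}^*(s)$ lies strictly to the left of $\alpha_L^*$ ``since it too is a rational function in $p^{-s}$.'' That inference is invalid: rationality in $p^{-s}$ places no constraint whatsoever on where the poles are (any real number can be the abscissa of a rational function in $p^{-s}$), so for an exceptional $p$ the factor $\zeta_{N,p}^*(s)$ could a priori have a pole at or to the right of $\alpha_L^*$, and then $R(s)$ would fail to be holomorphic near $\{\on{Re}(s)\ge\alpha_L^*\}$. Theorem~\ref{basic properties of cone integrals}(2) supplies the strict bound for the $\zeta_{L,p}^*$ factors precisely because $\zeta_L^*(s)$ has been realized as an Euler product of cone integrals via Corollary~\ref{existence of the cone integral data}; the analogous control on the group side needs the same realization for $\zeta_N^*(s)$. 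That is exactly the role of \cite[Corollary~1]{Su} in the paper's argument: it gives a cone integral datum $\mc{D}_N^*$ with $\zeta_N^*(s)=Z_{\mc{D}_N^*}(s-h)$, after which Theorem~\ref{basic properties of cone integrals}(2) applies to the group-theoretic local factors as well and Corollary~\ref{coro:basic properties of cone integrals}(1) finishes the proof. Without that ingredient the step you call ``formal'' is not, and the argument is incomplete.
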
 
\begin{proof}
By Corollary \ref{invariance of a_L and b_L by commmensurability} it is enough to prove this by just one $L$. By \cite[Theorem 4.1]{GSS}, there is $L$ satisfying the hypothesis such that for almost all primes $p$, $\zeta_N^*(s)$ and $\zeta_L^*(s)$ have the same local factor at $p$.
Now, by Corollary \ref{existence of the cone integral data}, there exists a cone integral data $\mc{D}_L^*$ such that $\zeta_L^*(s)=Z_{\mc{D}_L^*}(s-h)$. By \cite[Corollary 1]{Su}, there exists a cone integral data $\mc{D}_N^*$ such that $\zeta_N^*(s)=Z_{\mc{D}_N^*}(s-h)$. It follows that $Z_{\mc{D}_N^*}(s-h,p)=Z_{\mc{D}_L^*}(s-h,p)$ for almost all $p$. The proposition follows now from Corollary \ref{coro:basic properties of cone integrals}(1).
\end{proof}
Since the category of unipotent algebraic groups over a field $K$ of characteristic zero and the category of finite dimensional nilpotent Lie algebras over $K$ are equivalent, it is now clear that Theorem \ref{main theorem} follows from Theorem \ref{main theorem: general version}.

\section{An upper bound for $\alpha_N^*$ for non-abelian $\mf{T}$-groups}\label{Sec: Proof of Theorem A}
\noindent In this section we prove Theorem \ref{main theorem:upper bound for a(N)}. By Proposition \ref{the Mal'cev correspondence}, it is enough to prove the analogous result for nilpotent Lie rings. 
\begin{lemma}\label{lemma with a bound}
Let $\mb{Z}^e$ be the free abelian group of rank $e$, $e>0$, and let $\delta$ be a positive number such that $\delta<e$. Then there is a constant $k=k(e,\delta)$ such that
\begin{align*}
\sum_{n=1}^N\frac{a_n^\s(\mb{Z}^e)}{n^{e-\delta}}\leq k N^\delta,\quad\forall N\in\mb{N}.
\end{align*}
\end{lemma}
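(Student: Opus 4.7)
The plan is to combine Abel summation with a sharp control on the partial sums $A(N):=\sum_{n=1}^N a_n^\s(\mb{Z}^e)$.

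First, I would establish the linear bound $A(N)\leq CN^e$ for some constant $C=C(e)>0$ and every $N\geq 1$. By Example \ref{zeta function of Z^d}, $\zeta_{\mb{Z}^e}^\s(s)=\zeta(s)\zeta(s-1)\cdots\zeta(s-e+1)$; the rightmost singularity is the simple pole of $\zeta(s-e+1)$ at $s=e$, so Theorem \ref{results of dSG}(b) applied with $\alpha_{\mb{Z}^e}^\s=e$ and $b_{\mb{Z}^e}^\s=1$ yields the sharp asymptotic $A(N)\sim cN^e$, and in particular the desired uniform bound.

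Next I would perform Abel summation. Writing $a_n:=a_n^\s(\mb{Z}^e)=A(n)-A(n-1)$ with $A(0):=0$, a standard rearrangement gives
\begin{align*}
\sum_{n=1}^N \frac{a_n}{n^{e-\delta}}=\frac{A(N)}{N^{e-\delta}}+\sum_{n=1}^{N-1}A(n)\left(\frac{1}{n^{e-\delta}}-\frac{1}{(n+1)^{e-\delta}}\right).
\end{align*}
The first term is bounded by $CN^\delta$. For the sum, the mean value theorem applied to $x\mapsto x^{-(e-\delta)}$ gives $\frac{1}{n^{e-\delta}}-\frac{1}{(n+1)^{e-\delta}}\leq (e-\delta)n^{-(e-\delta)-1}$, so the $n$-th summand is at most $C(e-\delta)n^{\delta-1}$. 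Since $\delta>0$, integral comparison yields $\sum_{n=1}^{N-1}n^{\delta-1}\leq C'(\delta)N^\delta$ with $C'(\delta)$ depending only on $\delta$, so the total contribution is at most $CC'(\delta)(e-\delta)N^\delta$. Adding the two pieces produces the desired inequality with $k=k(e,\delta)$.

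The only non-mechanical ingredient is the sharp estimate $A(N)=O(N^e)$. The weaker $A(N)=O(N^{e+\epsilon})$ that one obtains formally from $\alpha_{\mb{Z}^e}^\s=e$ would only give $\sum_{n\leq N}a_n^\s(\mb{Z}^e)/n^{e-\delta}=O(N^{\delta+\epsilon})$, which is insufficient since $\delta$ may be arbitrarily small; what makes the argument go through is the simplicity of the pole of $\zeta_{\mb{Z}^e}^\s$ at $s=e$, which is visible directly from the explicit product formula of Example \ref{zeta function of Z^d}.
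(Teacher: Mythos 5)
Your proof is correct, and it follows a genuinely different (more elementary) route than the paper. The paper regards the modified series $Z(s)=\sum_n b_n n^{-s}$, $b_n=a_n^\s(\mb{Z}^e)/n^{e-\delta}$, as a shifted Euler product of cone integrals — namely $Z(s)=Z_\mc{D}(s-\delta)$ where $\mc{D}$ is the cone integral data of Corollary \ref{existence of the cone integral data} for $\zeta_{\mb{Z}^e}^\s$ — and then applies the Tauberian statement Theorem \ref{basic properties of cone integrals}(4) directly to $Z$, using that its abscissa of convergence is $\delta>0$ and the pole at $s=\delta$ is simple (by Example \ref{zeta function of Z^d}), to read off $\sum_{n\le N} b_n\sim cN^\delta$ in one step. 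You instead pull the asymptotics back to the \emph{unshifted} counting function $A(N)=\sum_{n\le N}a_n^\s(\mb{Z}^e)$, extract only the crude bound $A(N)=O(N^e)$ from Theorem \ref{results of dSG}(b) with $\alpha=e$, $b=1$, and then run partial summation by hand, reducing matters to the elementary estimate $\sum_{n\le N}n^{\delta-1}=O(N^\delta)$. Your computation is correct: the Abel rearrangement is exact with your sign convention, the mean-value bound $\frac{1}{n^{e-\delta}}-\frac{1}{(n+1)^{e-\delta}}\le (e-\delta)n^{-(e-\delta)-1}$ is right, and the convergence of the resulting power sum uses precisely $\delta>0$, as does the paper's hypothesis. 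You also correctly identify what the approach truly needs — the sharp $O(N^e)$ bound coming from the simplicity of the rightmost pole, not merely $A(N)=O(N^{e+\epsilon})$. The trade-off is this: your route is self-contained modulo the classical asymptotic for sublattices of $\mb{Z}^e$ (which in fact can be obtained without the cone-integral machinery at all), whereas the paper's route keeps the proof entirely within the cone-integral formalism and foreshadows the uses of Theorem \ref{basic properties of cone integrals}(4) elsewhere.
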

\begin{proof}
Consider the Dirichlet series $\ds Z(s)=\sum_{n=1}^\infty\frac{b_n}{n^s}$, where $\ds b_n=\frac{a_n^\s(\mb{Z}^e)}{n^{e-\delta}}$.
Then $Z(s)=\zeta_{\mb{Z}^e}^\s(s+e-\delta)=Z_\mc{D}(s-\delta)$, where $\mc{D}$ is the cone integral data of Corollary \ref{existence of the cone integral data} for the zeta function $\zeta_{\mb{Z}^e}^\s(s)$ (we view $\mb{Z}^e$ as an abelian Lie ring). The abscissa of convergence of $Z(s)$ is $\delta>0$, and the order of the pole of $Z(s)$ at $s=\delta$, which is the order of the pole of $\zeta_{\mb{Z}^e}^\s(s)$ at $e$, is 1. This follows from the formula for $\zeta_{\mb{Z}^e}^\s(s)$ given in Example \ref{zeta function of Z^d}. 
Thus, by Theorem \ref{basic properties of cone integrals}(4), there exists a constant $k$ such that $\sum_{n=1}^N b_n\leq k N^\delta$ for all $N$. This proves the lemma.
\end{proof}

We now set some notation. If $L$ is a ring additively isomorphic to $\mb{Z}^h$, then $\alpha_L^\lhd\leq\alpha_L^\s\leq\alpha_{\mb{Z}^h}^\s=h$. We denote $\delta_L^*=h-\alpha_L^*$ for $*\in\{\leq,\lhd\}$.
If $L$ is a nilpotent Lie ring, then $\gamma_i(L)$ denotes the $i$-th term of the lower central series. The nilpotency class of $L$ is the first positive integer $c$ such  that $\gamma_{c+1}(L)=0$. If $A$ is an ideal of $L$ we also define a lower series as follows: $\gamma_1(L,A)=A$ and $\gamma_i(L,A)=[\gamma_{i-1}(L,A),L]$ for $i>1$.
\begin{proposition}\label{prop: bounds for the abscissa}
Let $L$ be a non-abelian nilpotent Lie ring additively isomorphic to $\mb{Z}^h$, let $c$ be its nilpotency class, let $Z=\{x\in L: nx\in \gamma_c(L)\ \mbox{for some } n\in\mathbb{N}\}$, and let $e$ be the additive rank of $\gamma_c(L)$.
Then  $$\delta_L^\s\geq \frac{\delta_{L/Z}^\s+e}{1+(c-1)e}\quad\mbox{and}\quad \delta_L^\n\geq \frac{\delta_{L/Z}^\n+e}{1+e}.$$
\end{proposition}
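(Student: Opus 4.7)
The plan is to bound $\sum_{n \leq N} a_n^*(L)$ from above by parametrizing subrings/ideals of $L$ via data on $L/Z$ and on $\gamma_c(L) \cong \mb{Z}^e$, and then invoking Lemma \ref{lemma with a bound}.

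To each subring (resp.\ ideal) $A \leq L$ of finite additive index I would attach $\bar A := (A+Z)/Z \leq L/Z$, which is a subring (resp.\ ideal) of $L/Z$, and $H := A \cap \gamma_c(L) \leq \gamma_c(L)$. Setting $n_1 := [L/Z : \bar A]$ and $m := [\gamma_c(L) : H]$, the second isomorphism theorem together with the finiteness of $Z/\gamma_c(L)$ yields $[L : A] \asymp n_1 m$, and the number of $A$ realizing a given pair $(\bar A, H)$ is at most the size $m^{h-e}$ of the natural $\operatorname{Hom}(\bar A, \gamma_c(L)/H)$-torsor. Next, the subring condition $[A,A] \subseteq A$ (resp.\ the ideal condition $[L,A] \subseteq A$) forces $H$ to contain a commutator subgroup $K_*(\bar A) \leq \gamma_c(L)$ realized as the image of an appropriate iterated bracket. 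Using $\bar A \supseteq n_1\,(L/Z)$ (a consequence of the exponent of $(L/Z)/\bar A$ dividing $n_1$) together with multilinearity, I would establish universal bounds
\[
[\gamma_c(L) : K_\s(\bar A)] \leq C\, n_1^{(c-1)e}, \qquad [\gamma_c(L) : K_\lhd(\bar A)] \leq C\, n_1^{e},
\]
so that $m \leq C\, n_1^{\beta_*}$ with $\beta_\s = (c-1)e$ and $\beta_\lhd = e$.

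Combining these ingredients gives
\[
\sum_n \frac{a_n^*(L)}{n^s} \leq C \sum_{n_1} \frac{a_{n_1}^*(L/Z)}{n_1^s} \sum_{m \leq C\, n_1^{\beta_*}} \frac{a_m^\s(\mb{Z}^e)}{m^{s-(h-e)}}.
\]
Applying Lemma \ref{lemma with a bound} with $\delta = h-s \in (0,e)$, the inner sum is at most $k\, n_1^{\beta_*(h-s)}$. Assembling yields a bound by a constant times $\zeta_{L/Z}^*\bigl(s(1+\beta_*) - \beta_* h\bigr)$, which is finite whenever $s > (\alpha_{L/Z}^* + \beta_* h)/(1+\beta_*)$. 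Rewriting, this gives $\delta_L^* \geq (h - \alpha_{L/Z}^*)/(1+\beta_*) = (\delta_{L/Z}^* + e)/(1+\beta_*)$, as claimed.

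The main obstacle will be establishing the sharp exponents in the commutator bounds of the second step. The ideal case follows fairly cleanly from $\bar A \supseteq n_1(L/Z)$ combined with surjectivity of the iterated map $L^{\otimes(c-1)} \otimes (L/Z) \to \gamma_c(L)$. The subring case is more delicate, since a naive multilinear argument yields only $n_1^{ce}$, and extracting the correct exponent $(c-1)e$ requires a more careful analysis of how the iterated bracket $[\bar A, [\bar A, \cdots [\bar A, \bar A]\cdots]]$ is refined by the containment $\bar A \supseteq n_1(L/Z)$ on a finite-index sublattice.
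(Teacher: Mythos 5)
Your overall strategy matches the paper's: decompose a finite-index subring (resp.\ ideal) $A$ via its image modulo the isolator $Z$ of $\gamma_c(L)$ and its intersection with $\gamma_c(L)$ (the paper uses $A\cap Z$, a cosmetic variant), count the number of $A$ lying over a fixed pair by a Hom-bound as in \cite[Lemma 1.3.1]{LS}, exploit the inclusion $\gamma_c(A+Z)\subset A\cap Z$ (resp.\ $\gamma_c(L,A+Z)\subset A\cap Z$) to restrict the range of the inner sum, and apply Lemma \ref{lemma with a bound}. Your algebraic bookkeeping to convert $\beta_*$ into the stated lower bounds for $\delta_L^*$ is also correct.

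However, there is a genuine gap at precisely the point you flag, and your proposed route for closing it would not succeed. You write that the correct exponent $(c-1)e$ should follow from ``a more careful analysis of how the iterated bracket is refined by the containment $\bar A\supseteq n_1(L/Z)$.'' The containment $B\supseteq n_1 L+Z$ (equivalently $\bar A\supseteq n_1(L/Z)$) gives only $\gamma_c(B)\supseteq n_1^c\gamma_c(L)$, no matter how carefully one pushes it through the bracket, because every one of the $c$ slots must be specialized to $n_1 L$. That yields $\beta_\s=ce$, which is too weak. The paper obtains $(c-1)e$ by a strictly stronger structural input: choose $x_1,\dots,x_{h-e}\in L$ whose classes form a basis of $L/Z$ and elementary divisors $d_1\mid\dots\mid d_{h-e}$ with $\prod_j d_j=[L:B]$ so that $B/Z$ is generated by the classes of $d_j x_j$, and observe that $\gamma_c(L)$ is generated by iterated brackets $[x_{i_1},\dots,x_{i_c}]$ with \emph{not all indices equal} (the all-equal bracket vanishes by antisymmetry), whence $d_{i_j}d_{i_k}\mid[L:B]$ for two distinct positions and the remaining $c-2$ factors each divide $[L:B]$, giving $d_{i_1}\cdots d_{i_c}\mid[L:B]^{c-1}$ and hence $\gamma_c(B)\supseteq[L:B]^{c-1}\gamma_c(L)$. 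The saving of one full factor of $[L:B]^{e}$ comes from the Smith normal form together with the Lie-algebra identity $[x,x]=0$, not from the coarser containment $\bar A\supseteq n_1(L/Z)$. Without this specific argument, the subring bound you need is not established. (Your ideal case $\gamma_c(L,B)\supseteq[L:B]\gamma_c(L)$ does follow directly from $B\supseteq[L:B]L+Z$ since only one slot is constrained, and that part of your sketch is fine.)
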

\begin{proof} 
Notice that $Z$ is an ideal of $L$ included in the centre of $L$, the quotient ring $L/Z$ is additively isomorphic to $\mb{Z}^{h-e}$, and the index $k_1:=[Z:\gamma_c(L)]$ is finite. Since $Z$ is central, any subgroup of $Z$ is automatically an ideal of $L$.

If $A$ is a finite index subring of $L$, then $A+Z$ is a subring including $Z$, and $A\cap Z$ is a finite index subgroup of $Z$. If $A$ is an ideal, then $A+Z$ is also an ideal. In any case, we have $[L:A]=[L:A+Z][Z:A\cap Z]$.  It is easy to show that if $A$ is a subring, then $\gamma_c(A+Z)\subset A\cap Z$, and if $A$ is an ideal, then $\gamma_c(L, A+Z)\subset A\cap Z$. Thus, for any positive $\delta$ with $\delta<h$ we have
\begin{align*}
\zeta_{L}^\s(h-\delta)&=\sum_{A\leq_f L}[L:A]^{-h+\delta}=\sum_{\substack{B\leq_f L:\\ Z\subset B}}[L:B]^{-h+\delta}\sum_{\substack{C\leq_f Z:\\ \gamma_c(B)\subset C}}[Z:C]^{-h+\delta}\mu_{B,C}^\s,\\
\zeta_{L}^\n(h-\delta)&=\sum_{A\lhd_f L}[L:A]^{-h+\delta}=\sum_{\substack{B\lhd_f L:\\ Z\subset B}}[L:B]^{-h+\delta}\sum_{\substack{C\leq_f Z:\\ \gamma_c(L,B)\subset C}}[Z:C]^{-h+\delta}\mu_{B,C}^\n,
\end{align*}
where $\mu_{B,C}^\s=|\{A\leq_f L: A+Z=B, A\cap Z=C\}|$ and $\mu_{B,C}^\n=|\{A\lhd_f L: A+Z=B, A\cap Z=C\}|$.
By \cite[Lemma 1.3.1]{LS} we have $\mu_{B,C}^\n\leq\mu_{B,C}^\s\leq |\on{Hom}(B/Z,Z/C)|\leq [Z:C]^{h-e}$. Thus,
\begin{align}\label{eq11}
\zeta_L^\s(h-\delta)&\leq \sum_{\substack{B\leq_f L:\\ Z\subset B}}[L:B]^{-h+\delta}\sum_{\substack{C\leq_f Z:\\ \gamma_c(B)\subset C}}[Z:C]^{-e+\delta},\\
\nonumber \zeta_L^\n(h-\delta)&\leq \sum_{\substack{B\lhd_f L:\\ Z\subset B}}[L:B]^{-h+\delta}\sum_{\substack{C\leq_f Z:\\ \gamma_c(L,B)\subset C}}[Z:C]^{-e+\delta}.  
\end{align}

Given $B\leq_f L$ with $Z\subset B$, we claim that $\gamma_c(B)\supset [L:B]^{c-1}\gamma_c(L)$. In fact, there are $x_1,\ldots,x_{h-e}\in Z$ whose classes modulo $Z$ form a basis of the $\mb{Z}$-module $L/Z$, and there are positive integers $d_1|d_2|\cdots |d_{h-e}$ such that $B/Z$ is generated by the classes of $d_1x_1,\ldots,d_{h-e}x_{h-e}$. So $[L:B]=d_1d_2\cdots d_{h-e}$. Notice that $\gamma_c(L)$ is the subgroup of $Z$ generated by all the elements of the form $[x_{i_1},\ldots,x_{i_c}]:=[[\cdots[[x_{i_1},x_{i_2}],x_{i_3}],\cdots],x_{i_{c}}]$ with not all $i_1,\ldots,i_c$ equal, and $\gamma_c(B)$ is generated by all the elements of the form $d_{i_1}\cdots d_{i_c}[x_{i_1},\ldots,x_{i_c}]$ with not all $i_1,\ldots,i_c$ equal. Now, if $i_1,\ldots,i_c$ are not all equal to each other, then the product of two factors in $d_{i_1}\cdots d_{i_c}$ divides $[L:B]$ and the other factors are also divisors of $[L:B]$. Thus, $d_{i_1}\cdots d_{i_c}$ divides $[L:B]^{c-1}$ and hence $[L:B]^{c-1}[x_{i_1},\ldots,x_{i_c}]\in  \gamma_c(B)$. It follows that $\gamma_c(B)\supset [L:B]^{c-1}\gamma_c(L)$, as claimed.   Note that if $B$ is in addition an ideal, then $\gamma_c(L,B)\supset \gamma_c(L,[L:B]L)= [L:B]\gamma_c(L)$. 
Since 
$[L:[L:B]^t\gamma_c(L)]=k_1[\gamma_c(L): [L:B]^t\gamma_c(L)]=k_1[L:B]^{te}$ for any $t\in\mb{N}$, we conclude that 
\begin{align}\label{eq22}
\sum_{\substack{C\leq_f Z:\\ \gamma_c(B)\subset C}}[Z:C]^{-e+\delta} &\leq \sum_{\substack{C\leq_f Z\\ [Z:C]\leq k_1[L:B]^{(c-1)e}}}[Z:C]^{-e+\delta}\leq k_2 (k_1 [L:B]^{(c-1)e})^{\delta},\\
\nonumber \sum_{\substack{C\leq_f Z:\\ \gamma_c(L,B)\subset C}}[Z:C]^{-e+\delta} &\leq \sum_{\substack{C\leq_f Z\\ [Z:C]\leq k_1[L:B]^{e}}}[Z:C]^{-e+\delta}\leq k_2 (k_1 [L:B]^{e})^{\delta},
\end{align}
where $k_2$ is the constant provided by Lemma \ref{lemma with a bound}. 

A combination of (\ref{eq11}) and (\ref{eq22}) yields
\begin{align*}
\zeta_L^\s(h-\delta)&\leq k(\delta)\sum_{Z\subset B\leq_f L} [L:B]^{-h+\delta+\delta (c-1)e}=k(\delta)\zeta_{L/Z}^\s(h-\delta(1+(c-1)e)),\\
\zeta_L^\n(h-\delta)&\leq k(\delta)\sum_{Z\subset B\lhd_f L} [L:B]^{-h+\delta+\delta e}=k(\delta)\zeta_{L/Z}^\n(h-\delta(1+e)),
\end{align*}
for some constant $k(\delta)$.
It follows from the first inequality that $\zeta_L^\s(h-\delta)$ converges if $h-\delta(1+ce)>\alpha_{L/Z}^\s$, i.e., if $\delta<\frac{h-\alpha_{L/Z}^\s}{1+ce}=\frac{\delta_{L/Z}^\s+e}{1+(c-1)e}$. This proves that $\delta_{L/Z}^\s$ is at least $\frac{\delta_{L/Z}^\s+e}{1+(c-1)e}$. Similarly, from the second inequality we deduce that $\delta_L^\n$ is at least $\frac{\delta_{L/Z}^\n+e}{1+e}$.
\end{proof}

Theorem \ref{main theorem:upper bound for a(N)} follows from the next one.
\begin{theorem}\label{bounds for a_N}
Let $L$ be a non-abelian nilpotent Lie ring additively isomorphic to $\mb{Z}^h$, and let $c$ be its nilpotency class. 
\begin{enumerate}
\item If $c=2$, then $\alpha_L^\s\leq h-\frac{1}{2}$. If $c>2$ then $\alpha_L^\s\leq h-\frac{1}{c-1}$.
\item $\alpha_L^\n\leq h-1$.
\end{enumerate}
\end{theorem}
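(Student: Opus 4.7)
The plan is to prove both (1) and (2) by induction on the nilpotency class $c$, invoking Proposition~\ref{prop: bounds for the abscissa} at the inductive step. The key structural observation driving the induction is that when $c \geq 3$, the quotient $L/Z$ is necessarily non-abelian: one checks that $Z$ is contained in the center of $L$ (for $x \in Z$ with $nx \in \gamma_c(L)$ and $y \in L$, one has $n[x,y] = [nx,y] \in [\gamma_c(L), L] = 0$, so torsion-freeness of $L$ yields $[x,y]=0$), and hence $L/Z$ abelian would force $\gamma_2(L) \subseteq Z(L)$ and thus $\gamma_3(L) = 0$, contradicting $c \geq 3$. Thus, in the inductive step, $L/Z$ is a non-abelian nilpotent Lie ring of class $c' \leq c - 1$, and the inductive hypothesis applies to it.

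For (1), the base case $c = 2$ follows at once from the proposition, since $L/Z$ is abelian (because $\gamma_2(L) = [L,L] \subseteq Z$), so $\delta_{L/Z}^\s = 0$ and the proposition gives $\delta_L^\s \geq e/(1+e) \geq 1/2$ as $e \geq 1$. For the inductive step ($c \geq 3$), the inductive hypothesis supplies $\delta_{L/Z}^\s \geq 1/2$ if $c' = 2$ and $\delta_{L/Z}^\s \geq 1/(c'-1) \geq 1/(c-2)$ if $c' \geq 3$; inserting these into $\delta_L^\s \geq (\delta_{L/Z}^\s + e)/(1+(c-1)e)$ and a short monotonicity calculation in $e$ yield $\delta_L^\s \geq 1/(c-1)$. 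For (2) the inductive step is cleaner still: the inductive hypothesis provides $\delta_{L/Z}^\n \geq 1$, and the proposition yields $\delta_L^\n \geq (1+e)/(1+e) = 1$.

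The hard part is the base case $c = 2$ of (2), which is not accessible from the proposition as stated (which only gives $\delta_L^\n \geq e/(1+e) < 1$). To close this gap, I plan to rerun the argument of the proposition with the central subgroup $Z'$ taken to be (the isolator of) the center $Z(L)$ in place of $Z$. This is legitimate because $[L,L]$ is central for $c = 2$, so $Z' \supseteq [L,L]$, and the ideal condition $[L,A] \subseteq A$ still reduces, via the identity $[L,A] = [L,B]$, to $C \supseteq [L,B]$. The crucial refinement is that the inner sum $\sum_{C \supseteq [L,B]} [Z':C]^{h-e'-s}$, where $e' = \on{rank}(Z')$, is exactly the subgroup zeta function $\zeta_{Z'/[L,B]}^\s(s-(h-e'))$ of a finitely generated abelian group of rank $e'-e$ whose torsion subgroup has order at most $k_1 [L:B]^e$; this converges for $s > h-e$, and a divisor-function bound on the torsion contribution allows the outer sum over $B$ to go through, yielding $\alpha_L^\n \leq h - e \leq h - 1$ since $e \geq 1$ for non-abelian $L$.
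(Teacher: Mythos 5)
The proposal matches the paper's argument exactly on part (1) and on the inductive step of part (2): in both, Proposition~\ref{prop: bounds for the abscissa} is invoked at each step, and the base case $c=2$ of (1) follows immediately since $e\geq 1$ forces $\delta_L^\s\geq e/(1+e)\geq 1/2$. Your additional observation that $L/Z$ is non-abelian when $c\geq 3$ (so the inductive hypothesis applies) is a useful clarification of a point the paper leaves implicit, though you can sharpen ``class $c'\leq c-1$'' to ``class exactly $c-1$'': if $\gamma_{c-1}(L)\subseteq Z$ then $\gamma_c(L)=[\gamma_{c-1}(L),L]\subseteq [Z,L]=0$, a contradiction.

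The genuine difference is the base case $c=2$ of part (2). The paper simply cites \cite[Proposition 6.3]{GSS} and does not reprove it. Your proposal is to rerun the proof of Proposition~\ref{prop: bounds for the abscissa} with the isolator $Z'$ of the full center $Z(L)$ in place of the isolator $Z$ of $\gamma_2(L)=[L,L]$; the structure of the decomposition ($\mu_{B,C}^\n\leq[Z':C]^{h-e'}$, the constraint $[L,B]\subseteq C$) is correct and is a legitimate refinement. However, the key claim --- that ``a divisor-function bound on the torsion contribution allows the outer sum over $B$ to go through'' --- has a gap when $e:=\operatorname{rank}([L,L])\geq 2$. Writing $Z'/[L,B]\cong T_B\oplus\mb{Z}^{e'-e}$, one gets
$$\zeta_{Z'/[L,B]}^\s(t)=\Bigl(\sum_{D\leq T_B}[T_B:D]^{(e'-e)-t}\Bigr)\,\zeta_{\mb{Z}^{e'-e}}^\s(t),$$
and for $t$ in the range $e'-e<t\leq e'$ the torsion factor is bounded only by the number $s(T_B)$ of subgroups of $T_B$. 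Now $T_B$ is a finite abelian group of rank up to $e$ and order up to $k_1[L:B]^e$; when $e=1$ it is cyclic and $s(T_B)=d(|T_B|)\ll_\varepsilon[L:B]^\varepsilon$, so the argument does close and even yields $\alpha_L^\n\leq h-1$. But when $e\geq 2$, $s(T_B)$ grows like a genuine positive power of $[L:B]$ (e.g.\ the number of subgroups of $(\mb{Z}/p^a)^e$ grows polynomially in $p^a$), so the bound becomes $\zeta_L^\n(s)\ll\sum_B[L:B]^{-s+K}$ with $K>e'-1$, and the outer sum fails to converge down to $s=h-1$. In particular, since the induction on $c$ in (2) must appeal to the base case for quotients $L/Z$ with arbitrary $e$, this gap is not confined to a marginal case, and the stated conclusion $\alpha_L^\n\leq h-e$ is not established by this route. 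To complete (2) along the paper's lines, either cite \cite[Proposition 6.3]{GSS} for the class-$2$ base case, or supply a genuinely different estimate that controls the torsion contribution for $e\geq 2$.
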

\begin{proof}
 We use the notation of Proposition \ref{prop: bounds for the abscissa}. Observe that the nilpotency class of $L/Z$ is $c-1$.

We prove (1). Assume first that $c=2$. 
By Proposition \ref{prop: bounds for the abscissa}, $\delta_L^\s\geq \frac{e}{1+e}\geq \frac{1}{2}$, hence $\alpha_L^\s\leq h-\frac{1}{2}$. 
We now assume that $c>2$ and prove that $\alpha_L^\s\leq h-\frac{1}{c-1}$, or equivalently that $\delta_L^\s\geq \frac{1}{c-1}$, by induction on $c$. 
If $c=3$, then $\delta_L^\s\geq\frac{\delta_{L/Z}^\s+e}{1+2e}\geq\frac{1/2+e}{1+2e}=\frac{1}{2}=\frac{1}{c-1}$. Assume next that $c>3$ and that the result has been proved for $c-1$ (in particular for $L/Z$). Then $\delta_L^\s\geq \frac{\delta_{L/Z}^\s+e}{1+(c-1)e}\geq\frac{1/(c-2)+e}{1+(c-1)e}>\frac{1/(c-1)+e}{1+(c-1)e}=\frac{1}{c-1}$. This completes the induction and the proof of (1).

We now prove (2), which is equivalent to $\delta_L^\n\geq 1$, by induction on $c$. If $c=2$, then the result follows from \cite[Proposition 6.3]{GSS}. 
  Assume now that $c>2$ and that the result has been proved for $c-1$. By Proposition \ref{prop: bounds for the abscissa} we have $\delta_L^\n\geq \frac{\delta_{L/Z}^\n+e}{1+e}\geq \frac{1+e}{1+e}=1$. This completes the induction and the proof of (2).
\end{proof}

\section{A version of Theorem \ref{main theorem} for virtually nilpotent groups}\label{Sec: Main theorem for virtually nilpotent groups}

Let $G$ be a finitely generated virtually nilpotent group and let $N$ be its Fitting subgroup, that is, the maximal nilpotent normal subgroup. It is known that $\alpha_G^*\leq \alpha_N^*+1$ (cf. \cite[Proposition 5.6.4]{LS}, \cite[Theorem 3]{Su}), and the next example shows that the equality might hold. 
\begin{example}\label{example of virtually abelian group}
Let $N=\mb{Z}$ and $G=\mb{Z}\rtimes \on{Aut}(\mb{Z})$. Then $\zeta_N^\s(s)=\zeta(s)$ and $\zeta_G^\s(s)=2^{-s}\zeta(s)+\zeta(s-1)$. In particular, $\alpha_N^\s=1$ and $\alpha_G^\s=2$.
\end{example} 
It follows that $\alpha_G^*$ is not longer  commensurability-invariant within the class of finitely generated virtually nilpotent groups. 
This notwithstanding, it is possible to formulate a version of Theorem \ref{main theorem} for virtually nilpotent groups. First of all, given $G$ and $N$ as above, we may assume that $N$ is torsion-free, that is, a $\mf{T}$-group. In fact, the torsion subgroup $t(N)$ of $N$ is a finite normal subgroup of $G$ and the next lemma shows that $\alpha_G^*=\alpha_{G/t(N)}^*$.
\begin{lemma}\label{LS Proposition 5.6.4}
Let $G$ be a group of finite rank and $T$ a finite normal subgroup of $G$. Write $Q=G/T$.
Then $\alpha_G^*=\alpha_Q^*$ and $\zeta_{G,p}^*(s)=\zeta_{Q,p}^*(s)$ for every prime $p$ not dividing $|T|$.
\end{lemma}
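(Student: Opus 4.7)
The plan is to handle the two assertions separately, starting with the local factor equality. Given a prime $p \nmid |T|$, I would first show that every (normal) subgroup $H$ of $G$ of $p$-power index contains $T$: since $T \lhd G$, the set $HT$ is a subgroup and the second isomorphism theorem gives $[HT:H] = [T : T \cap H]$, which divides both $|T|$ and $[G:H]$; as these are coprime, $T \subseteq H$. The standard correspondence $H \leftrightarrow H/T$ between (normal) subgroups of $G$ containing $T$ and (normal) subgroups of $Q$ is index-preserving, so $a_{p^k}^*(G) = a_{p^k}^*(Q)$ for all $k\geq 0$, yielding $\zeta_{G,p}^*(s) = \zeta_{Q,p}^*(s)$.

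For the equality $\alpha_G^* = \alpha_Q^*$ I would sandwich. The inequality $\alpha_Q^* \leq \alpha_G^*$ is immediate: the index-preserving injection $K \mapsto \pi^{-1}(K)$ gives $a_n^*(Q) \leq a_n^*(G)$ for every $n$. For the reverse I would study the map $\Psi : H \mapsto HT/T$ from (normal) finite-index subgroups of $G$ to (normal) finite-index subgroups of $Q$. Whenever $[G:H] = n$ and $[Q:\Psi(H)] = m$, the identity $n = m \cdot [T : T \cap H]$ shows that $m \mid n$ and $n/m \mid |T|$.

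The crux is to bound the fiber sizes $|\Psi^{-1}(K)|$ uniformly in $K$. Writing $\tilde{K} := \pi^{-1}(K)$, any $H \in \Psi^{-1}(K)$ corresponds, for each possible $S := H \cap T \leq T$, to a complement of the finite normal subgroup $T/S$ in $\tilde{K}/S$. Now a complement of a finite normal subgroup $N$ in an arbitrary finitely generated group $X$ is determined by a homomorphic section $X/N \to X$, and such a section is determined by the images of a fixed set of $d(X/N)$ generators of $X/N$, each of which may be lifted in at most $|N|$ ways; hence the number of complements is at most $|N|^{d(X/N)}$. Since $G$ has finite rank $r$, the quotient $K$ of $Q=G/T$ satisfies $d(K) \leq r$ independently of $K$, and summing over the at most $2^{|T|}$ subgroups $S \leq T$ yields $|\Psi^{-1}(K)| \leq 2^{|T|}\,|T|^r =: c$, uniformly in $K$. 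Consequently
\begin{align*}
\sum_{n \leq N} a_n^*(G) \;\leq\; c \sum_{d\mid |T|} \sum_{m \leq N/d} a_m^*(Q) \;\leq\; c\,\tau(|T|) \sum_{m \leq N} a_m^*(Q),
\end{align*}
which gives $\alpha_G^* \leq \alpha_Q^*$.

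The main obstacle is precisely the uniform bound on $|\Psi^{-1}(K)|$ via the complement-counting argument; once that is in place, the rest is elementary bookkeeping and the two inequalities combine to give $\alpha_G^* = \alpha_Q^*$.
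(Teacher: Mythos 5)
Your proof is correct in substance and follows essentially the same approach as the paper: the local-factor equality uses the identical observation that $[HT:H]=[T:T\cap H]$ divides both $|T|$ and $[G:H]$, and the abscissa equality is proved by bounding the fibers of $H\mapsto HT/T$ via the number of lifts of a generating set of $K\leq Q$, which is exactly the derivation/complement count the paper invokes via \cite[Lemma 1.3.1]{LS} (the paper proves only the $\lhd$ case explicitly, deferring $\s$ to \cite[Prop.\ 5.6.2]{LS}, so your uniform treatment is a minor tidy-up). One small imprecision: in the subgroup case the quantity $S=H\cap T$ is normal in $H$ but need not be normal in $\tilde{K}=\pi^{-1}(K)$ (it is automatic only when $H\lhd G$), so the phrase ``complement of $T/S$ in $\tilde{K}/S$'' is not literally meaningful there. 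The bound is nonetheless correct: fixing coset representatives $g_1,\dots,g_r\in\tilde{K}$ of a generating set of $K$, each $H$ in the fiber with $H\cap T=S$ contains a unique right $S$-coset inside each $g_iT$, and $H=\langle S,h_1,\dots,h_r\rangle$ is determined by these, giving at most $[T:S]^r\leq|T|^r$ choices per $S$; this replaces the complement-counting sentence without changing anything else in your argument.
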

\begin{proof}
The equality $\alpha_G^\s=\alpha_Q^\s$ is proved in \cite[Proposition 5.6.2]{LS}. 
We adapt that proof to show that $\alpha_G^\n=\alpha_Q^\n$. Clearly $\alpha_G^\n\geq \alpha_Q^\n$, so we only need to focus on the reverse inequality.

Fix a positive integer $n$. A normal subgroup $H\lhd G$ of index $n$ determines normal subgroups $H\cap T\lhd T$ and $HT\lhd G$. The index $[T: H\cap T]$ divides both $n$ and $|T|$, and we have $[G:HT]=n/[T:H\cap T]$. 
Now fix a common divisor, say $t$, of $|T|$ and $n$. Fix also $D\lhd T$ and $B\lhd G$  such that $T\subset B$, $[T:D]=t$ and $[G:B]=n/t$.
If there is $H\lhd G$ such that $H\cap T=D$ and $HT=B$, then necessarily $[G:H]=n$, $D$ is normal in $G$, and $H/D$ is a complement of $T/D$ in $HT/D$. 
Therefore, there are at most $|\on{Der}(B/T,T/D)|$ possibilities for $H$, and this number is turn bounded by $[T:D]^{\on{rk}(Q)}\leq |T|^{\on{rk}(Q)}$; cf.\ \cite[Lemma 1.3.1]{LS}. It follows that
\begin{align*}
a_n^\n(G)\leq \sum_{t|n,\ t||T|} a_t^\n(T)a_{n/t}^{\n}(Q)|T|^{\on{rk}(Q)}\leq |T|^{\on{rk}(Q)} \zeta_T^\n(0)\sum_{t|n,\ t||T|} a_{n/t}^\n(Q).
\end{align*}

By using the above inequality we now conclude that for any positive integer $n$,
\begin{align*}
\sum_{j=1}^n a_j^\n(G)\leq |T|^{\on{rk}(Q)}\zeta_T^\n(0) \sum_{t| |T|}\sum_{t|j\leq n} a_{j/t}^\n(Q)\leq |T|^{\on{rk}(Q)+1}\zeta_T^\n(0)\sum_{j=1}^n a_j^\n(Q)
\end{align*}
and this clearly implies that $\alpha_G^\n\leq \alpha_Q^\n$.

The equality $\zeta_{G,p}^*(s)=\zeta_{Q,p}^*(s)$ for a prime $p\nmid |T|$ holds because the index of any subgroup $H\leq G$ is divisible by $[TH:H]=[T:T\cap H]$,  hence if $[G:H]$ is a power of $p$ we have $[T:T\cap H]=1$, that is $H\supseteq T$.
\end{proof}

We return to the set-up introduced at the beginning of the section. We will also assume from now on that $N$ is a $\mf{T}$-group. Consider now the induced group extension $S:1\to N\to G\xrightarrow{\pi} F\to 1$. It is easy to check that 
\begin{align*}
	\zeta_G^\s(s)=\sum_{E\leq F}[F:E]^{-s}\zeta_{S,E}^{\s}(s),\quad \quad  \zeta_G^\lhd(s)=\sum_{E\lhd F}[F:E]^{-s}\zeta_{S,E}^{\n}(s),
\end{align*}
where 
$$\ds	\zeta_{S,E}^{\s}(s):=\sum_{\substack{A\leq_f G:\ \pi(AN)=E}}[\pi^{-1}(E):A]^{-s},\quad \quad \ds\zeta_{S,E}^{\n}(s):=\sum_{\substack{A\lhd_f G:\ \pi(AN)=E}}[\pi^{-1}(E):A]^{-s}.$$
We denote by $\alpha_{S,E}^\s$ or $\alpha_{S,E}^\n$ the abscissae of convergence of $\zeta_{S,E}^\s(s)$ and $\zeta_{S,E}^\n(s)$. We recall the following result.
\begin{theorem}[{\cite{Su}}]\label{main resulta of Su}
Let $*\in\{\leq,\lhd\}$, and let $E\leq F$, where $E$ is normal if $*=\lhd$. 
Then there exists a cone integral data $\mc{D}_E^*$ over $\mb{Q}$ such that $\zeta_{S,E}^{*}(s)=Z_{\mc{D}_E*}(s-h(N)-|E|+1)$. Therefore, $\alpha_{S,E}^*$ is a rational number and $\zeta_{S,E}^*(s)$ has meromorphic continuation to a region of the form $\on{Re}(s)>\alpha_{S,E}^*-\delta$ for some $\delta>0$.  
\end{theorem}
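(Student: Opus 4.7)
The plan is to mimic the du Sautoy--Grunewald template that expresses $\zeta_N^*(s)$ as an Euler product of cone integrals, now in the fibered setting where one enumerates subgroups $A \leq G$ (or $A \lhd G$) sitting over a prescribed $E \leq F$. Write $G_E := \pi^{-1}(E)$. The first step is to establish an Euler product $\zeta_{S,E}^*(s) = \prod_p \zeta_{S,E,p}^*(s)$, whose local factor enumerates subgroups of $p$-power index in $G_E$ whose image modulo $N$ is exactly $E$. For any such $A$ one has $[G_E : A] = [N:A\cap N]$, so after passing to the pro-$p$ completion $\hat{N}_p$ the counting problem localizes just as for $\mf{T}$-groups.

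Next I would fix a Mal'cev basis of $N$, identifying $\hat{N}_p$ with $\mb{Z}_p^h$ where $h = h(N)$, and choose a set-theoretic section $\{g_e : e \in E\}$ of $\pi$ with $g_1 = 1$. An admissible $A$ in the pro-$p$ completion $\hat{G}_{E,p}$ with $A \cap \hat{N}_p = M$ and $A\hat{N}_p = \hat{G}_{E,p}$ is encoded by a pair $(m,\sigma)$, where $m \in \on{Tr}_h(\mb{Z}_p)$ is a Hermite-normal-form matrix for $M \leq \hat{N}_p$ and $\sigma : E \setminus \{1\} \to \hat{N}_p/M$ is a map such that $\{g_e \cdot \tilde{\sigma}(e) : e \in E\} \cdot M$ is a subgroup of $\hat{G}_{E,p}$ (for any, hence every, lift $\tilde\sigma$). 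Lifting $\sigma$ coordinate-wise to $\mb{Z}_p$-valued parameters produces a natural integration domain inside $\on{Tr}_h(\mb{Z}_p) \times \mb{Z}_p^{(|E|-1)h}$, at the cost of an overcounting factor $\prod_i |m_{ii}|_p^{-(|E|-1)}$ coming from the freedom to shift each $\tilde{\sigma}(e)$ by an element of $M$.

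The heart of the argument is to verify that the condition ``$A(m,\sigma)$ is a subgroup'' (respectively, ``is normal in $\hat{G}_p$'') translates, via the polynomiality of the Mal'cev group law and conjugation, into finitely many inequalities of the form $\on{ord}_p(f_i) \leq \on{ord}_p(g_i)$ with $f_i, g_i \in \mb{Q}[m,\sigma]$. Assembling everything, the $s$-indexing $[G_E:A]^{-s} = \prod_i |m_{ii}|_p^{s}$, the standard Jacobian factor $\prod_i |m_{ii}|_p^{-i}$ from parametrizing $M \leq \hat{N}_p$ by $\on{Tr}_h(\mb{Z}_p)$, and the overcounting correction $\prod_i |m_{ii}|_p^{|E|-1}$ combine to give an integrand $\prod_i |m_{ii}|_p^{(s-|E|+1)-i}$, which is precisely the form of a cone integral at argument $s - h - |E| + 1$. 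This yields $\zeta_{S,E,p}^*(s) = Z_{\mc{D}_E^*}(s - h(N) - |E| + 1, p)$ for a single cone integral data $\mc{D}_E^*$ over $\mb{Q}$. Rationality of $\alpha_{S,E}^*$ and meromorphic continuation past the abscissa then follow from Theorem \ref{basic properties of cone integrals}(2)--(3).

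The main obstacle is producing the polynomials $f_i, g_i$ explicitly, especially in the normal case: the condition $gA(m,\sigma)g^{-1} \subseteq A(m,\sigma)$ for $g$ running over a generating set of $G$ (and not just of $G_E$) yields extra polynomial inequalities that couple the matrix entries of $m$ and the coordinates of $\sigma$, and one must check that the resulting system is genuinely of cone type as in Definition \ref{definition of cone integrals}. A closely related subtle point is to verify that a change of set-theoretic section $\{g_e\}$ affects $\mc{D}_E^*$ only by an integral change of variables in the $\sigma$-coordinates, so that the associated cone integral data is well-defined up to the invariants that enter Theorem \ref{basic properties of cone integrals}.
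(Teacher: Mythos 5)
The paper imports this theorem from \cite{Su} and does not reproduce the proof, so a direct comparison is not possible; based on the companion statements (Proposition \ref{cone integral data for virtually nilpotent groups} points to \cite[Section 2.2]{Su} for the construction of $\mc{D}_E^*$), the method in \cite{Su} is indeed along the lines you sketch: parametrize each $A$ with $\pi(AN)=E$ by a Hermite-normal-form matrix $m$ for $A\cap \hat N_p$ and lift data $\tilde\sigma$ for the section, translate the closure conditions into cone conditions using the polynomiality of the Mal'cev group law, and read off the shift. Your exponent bookkeeping is correct, and the integrand $\prod_i |m_{ii}|_p^{(s-|E|+1)-i}$ does give a cone integral at argument $s-h-|E|+1$ (note a small internal inconsistency: you write the overcounting correction once as $\prod_i |m_{ii}|_p^{-(|E|-1)}$ and once as $\prod_i |m_{ii}|_p^{|E|-1}$; the first is the correct one, and it is what your final integrand uses).

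There are, however, two places where your argument is thinner than it should be. First, the Euler product $\zeta_{S,E}^*(s)=\prod_p\zeta_{S,E,p}^*(s)$ is asserted without justification. This is not automatic for a virtually nilpotent $G$, unlike for a $\mf{T}$-group where the profinite completion splits as $\prod_p \hat N_p$. The fact that it holds rests on $G_E$ being polycyclic (hence residually finite with controlled pro-$p$ completions), so that a finite-index $A$ with $AN=G_E$ is recovered from its images in the pro-$p$ completions $\hat G_{E,p}$, and these local images can be chosen independently; you should supply this (or cite a version for polycyclic groups). Second, you flag the verification of the cone-type conditions as ``the main obstacle'' but leave it open. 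The point to make concrete is that membership $x\in M$ for $x\in\mb{Z}_p^h$ and $M$ spanned by the rows of $m$ is equivalent, via the adjugate $m'$, to $\on{ord}_p(\det m)\le\on{ord}_p\bigl(\sum_j x_j m'_{jk}\bigr)$ for all $k$, exactly as in Proposition \ref{expression of zeta functions as cone integrals}; then the subgroup condition (for pairs $e,e'\in E$) and the normality condition (for $g$ in a finite generating set of $G$, with the coset index transported by $\pi(g)$) each unwind, after applying the $\mb{Q}$-polynomial Mal'cev multiplication and conjugation, to a finite list of such conditions with polynomials in $\mb{Q}[m,\tilde\sigma]$. Once that is written out, the collection $\mc{D}_E^*$ is in hand and the rationality and meromorphic continuation follow from Theorem \ref{basic properties of cone integrals}(2)--(3) as you say.
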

It follows that  $\alpha_G^*$ is a rational number and that $\zeta_G^*(s)$ has meromorphic continuation to a region of the form $\on{Re}(s)>\alpha_G^*-\delta$ for some $\delta>0$.

\subsection{The case of virtually abelian groups}
To motivate the formulation of Theorem \ref{main theorem} for virtually nilpotent groups, we will make a digression and discuss the case where $N$ is abelian. 
We will change the notation and write $T$ instead of $N$.
In this case, a formula for $\zeta_{S,E}^{*}(s)$ (up to a finite number of local factors) was given in \cite{dSMS}, and this suffices to read off the abscissa of convergence (Theorem \ref{basic properties of cone integrals}(2)). We will recall this  result (see Proposition \ref{main result of dSMS} below) after introducing some notation.

Let $F$ be a finite group and let $V$ be a $\mb{Q}[F]$-module of finite dimension over $\mb{Q}$. 
\begin{enumerate}
	\item  Let $\mb{Q}[F]=A_0\oplus A_1\oplus\cdots A_r$ be a decomposition of $\mb{Q}[F]$ into simple components. Then $A_i$ is isomorphic to a matrix algebra $M_{m_i}(D_i)$ for some central division algebra $D_i$ over a number field $K_i$. Assume that  $A_0=\mb{Q}\cdot \sum_{\gamma\in F}\gamma$, so that $D_0=K_0=\mb{Q}$ and $m_0=1$. 
	\item  Let $n_i^2=\dim_{K_i} A_i=m_i^2e_i^2$, where $e_i^2=\dim_{K_i} D_i$.
	\item Let $V=V_0\oplus V_1\oplus\cdots \oplus V_r$, where $V_i=A_iV$. Then $V_i\cong (D_i^{m_i})^{k_i}$ (as $A_i$-modules) for some integer $k_i\geq 0$. Note that $V_0\cong \mb{Q}^{k_0}$ is the set of fixed points of $F$.
\end{enumerate}
 Define 
\begin{align*}
	\zeta_{F\curvearrowright V}^\s(s):=\prod_{j=0}^{k_0-1}\zeta(s-j)\cdot \prod_{i=1}^{r}\prod_{j=0}^{k_ie_i-1}\zeta_{K_i}(n_i(s-1)-j),\quad 	\zeta_{F\curvearrowright V}^\n(s):=\prod_{j=0}^{k_0-1}\zeta(s-j)\cdot \prod_{i=1}^{r}\prod_{j=0}^{k_ie_i-1}\zeta_{K_i}(n_is-j), 
\end{align*}
where $\zeta_{K_i}(s)$ is the Dedekind zeta function of $K_i$.
Let $\alpha_{F\curvearrowright V}^*$ denote the abscissa of convergence of $\zeta_{F\curvearrowright V}^*(s)$. Since $\zeta_{K_i}(s)$ has abscissa of convergence 1, we conclude that 
\begin{align*}
	\alpha_{F \curvearrowright V}^\s=\max\left\{ k_0,\frac{k_1}{m_1}+1,\ldots,\frac{k_r}{m_r}+1\right\},\quad \alpha_{F\curvearrowright V}^\n=\max\left\{ k_0,\frac{k_1}{m_1},\ldots,\frac{k_r}{m_r}\right\}.
\end{align*} 
  
\begin{remark}\label{remark on the abscissa for virtually abelian groups=h+1}
A quick analysis shows that either $\alpha_{F\curvearrowright V}^{\s}\leq \dim V$ or else $\alpha_{F\curvearrowright V}^{\s}=\dim V+1$, in which case the action of $F$ on $V$ is non-trivial, every $f\in F$ acts on $V$ either as the identity or as minus the identity, and $\zeta_{F\curvearrowright V}^{\s}(s)=\zeta_V(s-1)$, where $\zeta_V(s):=\prod_{j=0}^{\dim V-1}\zeta(s-j)$.
\end{remark}
\begin{remark}\label{remark on the abscissa for virtually abelian groups and base change} 
Note that after base change with $\mb{C}$, $A_i$ decomposes as a product of $[K_i:\mb{Q}]$ copies of $M_{m_ie_i}(\mb{C})$, and $V_i$ decomposes accordingly as a product of $[K_i:\mb{Q}]$ copies of $(\mb{C}^{m_ie_i})^{e_ik_i}$. Since $\frac{e_ik_i}{e_im_i}=\frac{k_i}{m_i}$, the numbers $\alpha_{F\curvearrowright V}^\s$ and $\alpha_{F\curvearrowright V}^\n$   can still be read off from the $\mb{C}[F]$-module $V\otimes_\mb{Q}\mb{C}$. Hence:
\end{remark} 
\begin{corollary}\label{coro: invariance of a(F,V)}
Let $V_1$ and $V_2$ be $\mb{Q}[F]$-modules of finite dimension over $\mb{Q}$. If $V_1\otimes_\mb{Q}\mb{C}\cong V_2\otimes_\mb{Q}\mb{C}$ are isomorphic as $\mb{C}[F]$-modules, then $\alpha_{F\curvearrowright V_1}^*=\alpha_{F\curvearrowright V_2}^*$.
\end{corollary}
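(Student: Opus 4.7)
The plan is to essentially unpack Remark \ref{remark on the abscissa for virtually abelian groups and base change} into an actual proof. The formulas for $\alpha_{F\curvearrowright V}^{\s}$ and $\alpha_{F\curvearrowright V}^{\n}$ show that these abscissae are determined by the rational numbers $k_0$ and $k_i/m_i$ for $i=1,\ldots,r$, so it suffices to show that each of these numbers can be read off from the $\mb{C}[F]$-module structure of $V\otimes_{\mb{Q}}\mb{C}$.

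First I would analyse $\mb{C}[F]=\mb{Q}[F]\otimes_{\mb{Q}}\mb{C}$ factor-by-factor. For each $i$, using $D_i\otimes_{\mb{Q}}\mb{C}=D_i\otimes_{K_i}(K_i\otimes_{\mb{Q}}\mb{C})\cong D_i\otimes_{K_i}\mb{C}^{[K_i:\mb{Q}]}\cong M_{e_i}(\mb{C})^{[K_i:\mb{Q}]}$, one obtains the decomposition
\begin{equation*}
A_i\otimes_{\mb{Q}}\mb{C}\cong M_{m_i}(D_i\otimes_{\mb{Q}}\mb{C})\cong M_{m_ie_i}(\mb{C})^{[K_i:\mb{Q}]}.
\end{equation*}
Thus $\mb{C}[F]$ splits as the product, over $i=0,\ldots,r$ and over the $[K_i:\mb{Q}]$ factors coming from each $A_i$, of copies of $M_{m_ie_i}(\mb{C})$; these matrix algebras identify all the simple $\mb{C}[F]$-modules, and the isomorphism class of $V\otimes_{\mb{Q}}\mb{C}$ is determined by its multiplicity vector.

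Next I would compute the multiplicity of each such simple module in $V_i\otimes_{\mb{Q}}\mb{C}=(D_i^{m_i})^{k_i}\otimes_{\mb{Q}}\mb{C}$. A dimension count gives $\dim_{\mb{C}}(D_i^{m_i}\otimes_{\mb{Q}}\mb{C})=m_ie_i^2[K_i:\mb{Q}]$, and by Galois symmetry (the $[K_i:\mb{Q}]$ factors of $A_i\otimes_{\mb{Q}}\mb{C}$ are permuted transitively by $\on{Gal}(\overline{\mb{Q}}/\mb{Q})$) the simple module $\mb{C}^{m_ie_i}$ appears with equal multiplicity in each of these factors. Solving gives multiplicity $e_i$ per factor, so the multiplicity of $\mb{C}^{m_ie_i}$ in $V_i\otimes_{\mb{Q}}\mb{C}$, at each of the $[K_i:\mb{Q}]$ Galois factors, equals $k_ie_i$. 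For $i=0$ the number $k_0=\dim_{\mb{Q}} V_0=\dim_{\mb{Q}} V^F$ is simply the multiplicity of the trivial module in $V\otimes_{\mb{Q}}\mb{C}$.

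Finally I would note that the pair of numbers (multiplicity, simple-module dimension) at each complex simple factor is $(k_ie_i,\,m_ie_i)$, and its ratio $k_i/m_i$ therefore depends only on $V\otimes_{\mb{Q}}\mb{C}$, as does $k_0$. Substituting into the formulas for $\alpha_{F\curvearrowright V}^{\s}$ and $\alpha_{F\curvearrowright V}^{\n}$ yields the corollary. The only non-routine point is the equidistribution step in the previous paragraph — that each complex simple factor of $A_i\otimes_{\mb{Q}}\mb{C}$ receives the same multiplicity of $V_i\otimes_{\mb{Q}}\mb{C}$ — which I expect to handle either by the Galois argument sketched above or, more directly, by tensoring the isomorphism $V_i\cong (D_i^{m_i})^{k_i}$ over $\mb{Q}$ with $\mb{C}$ and using that each simple factor of $D_i\otimes_{\mb{Q}}\mb{C}\cong M_{e_i}(\mb{C})^{[K_i:\mb{Q}]}$ acts on its own $e_i$-dimensional summand of $D_i\otimes_{\mb{Q}}\mb{C}$ as a simple module.
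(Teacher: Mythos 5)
Your proposal is correct and follows essentially the same route as the paper: the paper's proof of the corollary is just the preceding Remark \ref{remark on the abscissa for virtually abelian groups and base change}, which you are unpacking in detail (the paper asserts directly that $V_i\otimes_\mb{Q}\mb{C}$ decomposes as $[K_i:\mb{Q}]$ copies of $(\mb{C}^{m_ie_i})^{e_ik_i}$ and concludes via $\frac{e_ik_i}{e_im_i}=\frac{k_i}{m_i}$, which is exactly your multiplicity count). One small slip in your final sentence: each simple factor $M_{e_i}(\mb{C})$ of $D_i\otimes_\mb{Q}\mb{C}$ acts on its own $e_i^2$-dimensional (not $e_i$-dimensional) summand, namely its regular representation, which is $e_i$ copies of the simple module; this does not affect the argument.
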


Now, let $G$ be a finitely generated virtually abelian group with torsion-free Fitting subgroup $T\lhd G$ (hence $T\cong \mb{Z}^h$ for some $h$), and let $S:1\to T\to G\xrightarrow{\pi} F\to 1$ be the associated group extension. 
Set $V:=T\otimes_\mb{Z}\mb{Q}$, which is naturally a $\mb{Q}[F]$-module. Note that if $E\lhd F$ is a normal subgroup, then the $0$-homology $H_0(E,V)$ of $E$ with coefficients in $V$ is also a $\mb{Q}[F]$-module.
\begin{proposition}[{\cite[Sec.\ 2]{dSMS}}]\label{main result of dSMS}
For each subgroup $E\leq F$, the series $\zeta_{S,E}^{\s}(s)$ and $\zeta_{E\curvearrowright V}^\s(s)$ have the same local factor at $p$ for almost all primes $p$.  For each normal subgroup $E\lhd F$, the series $\zeta_{S,E}^{\n}(s)$ and $\zeta_{F\curvearrowright H_0(E,V)}^\n(s)$ have the same local factor at $p$ for almost all primes $p$. 
\end{proposition}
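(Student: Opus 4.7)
The plan is to compare local factors at all primes $p\nmid |F|$ (and those unramified in the relevant division algebras), since this leaves only finitely many exceptional factors. Fix $E\leq F$ and let $H:=\pi^{-1}(E)$, giving $1\to T\to H\to E\to 1$. A finite-index subgroup $A\leq H$ with $\pi(A)=E$ corresponds bijectively to a pair $(L,\bar A)$, where $L:=A\cap T$ is an $E$-invariant sublattice of $T$ (and $[H:A]=[T:L]$) and $\bar A:=A/L$ is a complement of $T/L$ in $H/L$. At $p\nmid |E|$, the vanishing of $H^i(E,T/L)$ for $i=1,2$ forces the extension $1\to T/L\to H/L\to E\to 1$ to split, and the number of complements equals $|Z^1(E,T/L)|=[T:L]/|(T/L)^E|$. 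Hence
\begin{equation*}
\zeta_{S,E,p}^{\s}(s)=\sum_{\substack{L\leq T\ E\text{-invariant}\\ [T:L]=p^k}}\frac{[T:L]^{1-s}}{|(T/L)^E|}.
\end{equation*}

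To identify this with the local factor of $\zeta_{E\curvearrowright V}^{\s}(s)$, use that for $p\nmid |E|$ the group ring $\mb{Z}_p[E]$ is a maximal order in $\mb{Q}_p[E]$; via the Wedderburn decomposition $\mb{Q}[E]\cong\prod_i A_i$ with $A_i\cong M_{m_i}(D_i)$, the lattice $\hat T_p$ splits as a direct sum of lattices over the component maximal orders, and its $E$-sublattices are products of ideals in these orders. A classical computation in the spirit of Solomon and Bushnell--Reiner (combined with a Burnside-style treatment of the $|(T/L)^E|$ weight) then expresses the above sum as the local factor at $p$ of $\prod_{j=0}^{k_0-1}\zeta(s-j)\cdot\prod_{i\geq 1}\prod_{j=0}^{k_ie_i-1}\zeta_{K_i}(n_i(s-1)-j)$, where the weight $|(T/L)^E|^{-1}$ is precisely what shifts $s$ to $s-1$ inside the nontrivial Dedekind factors.

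For the ideal case ($E\lhd F$, $A\lhd G$ with $AT=H$), normality of $A$ forces $L:=A\cap T$ to be $F$-invariant and, via the commutator identity $[t,a]=(1-e)\cdot t$ for $a\in A$ projecting to $e\in E$, to contain $I_ET:=\langle(1-e)t:e\in E,\,t\in T\rangle$; thus $L$ descends to an $F$-invariant sublattice of $T_E:=T/I_ET=H_0(E,T)$. Since $T/L$ is then a trivial $E$-module, complements of $T/L$ in $H/L$ form a torsor under $\on{Hom}(E,T/L)$, and requiring $A$ to be $F$-stable in $G$ amounts to selecting those complements fixed by the natural $F$-action on this torsor. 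At good $p$, this selection contributes a single extra $F$-fixed-point weight, and the sum reduces to a weighted count of $F$-invariant sublattices of $T_E$ which coincides with the local factor of $\zeta_{F\curvearrowright H_0(E,V)}^{\n}(s)$ by the same maximal-order analysis applied now to the $F$-module $T_E$.

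The main obstacle is the bookkeeping in the ideal case: verifying that $F$-normality of the complement contributes exactly one extra fixed-point weight---with precisely the factor that yields $\zeta_{K_i}(n_i s-j)$ in place of $\zeta_{K_i}(n_i(s-1)-j)$---requires a twisted $Z^1$-argument comparing the $F$-action on the torsor of complements to its own fixed points, rather than a direct Schur--Zassenhaus count. The companion task, matching the orbit-weighted lattice sum over a maximal-order module to a product of partial Dedekind zeta functions, is classical but demands careful tracking of how fixed-point weightings distribute across the Wedderburn components and of the ramification data of each $K_i$, so that the set of primes excluded remains finite.
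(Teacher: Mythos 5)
The paper does not prove this proposition; it is cited from \cite{dSMS}, so your proposal has to be evaluated on its own terms rather than against a proof given in the text. Your subring case is essentially correct and follows the standard route: classify $A$ with $\pi(A)=E$ by the pair $(L=A\cap T,\ A/L)$ where $L$ is $E$-invariant and $A/L$ is a complement of $T/L$ in $H/L$; for $p\nmid|E|$ the cohomology vanishes, the number of complements is $|Z^1(E,T/L)|=[T:L]/|(T/L)^E|$, and since $[T:L]/|(T/L)^E|=\prod_{i\ge 1}[T_i:L_i]$ in the isotypic decomposition of $T\otimes\mb{Z}_p$ over the maximal order $\mb{Z}_p[E]$, the weighted lattice sum factors into the trivial part $\prod_j\zeta_p(s-j)$ and the non-trivial parts evaluated at $s-1$, giving $\zeta_{E\curvearrowright V,p}^{\s}(s)$.

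There is, however, a genuine gap in the ideal case, and what you flag as the main obstacle is not actually an obstacle. You correctly find that $A\lhd G$ forces $L=A\cap T$ to be $F$-invariant and to contain $I_ET$, so $T/L$ is a trivial $E$-module and the complements of $T/L$ in $H/L$ form a torsor under $\on{Hom}(E,T/L)$. But you then claim that picking out the $G$-normal $A$'s is a nontrivial selection that ``contributes a single extra $F$-fixed-point weight — with precisely the factor that yields $\zeta_{K_i}(n_is-j)$ in place of $\zeta_{K_i}(n_i(s-1)-j)$,'' requiring a twisted $Z^1$-argument. This is not what happens. For $p\nmid|E|$ the group $\on{Hom}(E,T/L)$ is trivial, since any homomorphism from $E$ (order prime to $p$) into the finite abelian $p$-group $T/L$ factors through $E^{\rm ab}$ and must vanish; so the complement $A/L$ is \emph{unique}, and being unique it is automatically preserved under conjugation by $G/L$. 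Hence there is exactly one normal $A$ for each admissible $L$, no extra weight of any kind, and
\[
\zeta_{S,E,p}^{\n}(s)=\sum_{\substack{L\le T\ F\text{-inv.},\ L\supseteq I_ET\\ [T:L]=p^k}}[T:L]^{-s},
\]
which is exactly the $p$-local Solomon-type count of $F$-invariant sublattices of the $F$-module $T/I_ET$, matching $\zeta_{F\curvearrowright H_0(E,V),p}^{\n}(s)$ directly. Your framing of $\zeta_{K_i}(n_is-j)$ as the result of a compensating factor applied to $\zeta_{K_i}(n_i(s-1)-j)$ is also misleading: the two expressions come from different data (the $E$-module $V$ with a complement-count weight in the $\s$ case, versus the $F$-module $H_0(E,V)$ with no weight in the $\n$ case), and the $K_i$, $m_i$, $k_i$ are not the same in the two formulas. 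Once the uniqueness of the complement is recognized, the ideal case is in fact \emph{simpler} than the subring case, and the ``twisted $Z^1$'' and ``fixed-point weight'' bookkeeping you worry about does not arise at good primes.
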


\begin{corollary}\label{the abscissa of convergence of virtually abelian groups}
For each $E\leq F$ we have $\alpha_{S,E}^{\s}=\alpha_{E \curvearrowright V}^\s$, and for each $E\lhd F$ we have $\alpha_{S,E}^{\n}=\alpha_{F\curvearrowright H_0(E,V)}^\n$.
 \end{corollary}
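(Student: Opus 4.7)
The plan is to combine Proposition~\ref{main result of dSMS} with the following general principle: if two Dirichlet series with non-negative coefficients admit Euler products whose local factors coincide at all but finitely many primes, and if for every prime the abscissa of convergence of the local factor is strictly smaller than the global one, then the two global series have the same abscissa. I would first verify the local-abscissa hypothesis for each pair of series in the statement.

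For $\zeta_{S,E}^{*}(s)$ the hypothesis is immediate: by Theorem~\ref{main resulta of Su}, $\zeta_{S,E}^{*}(s)$ is a shift of an Euler product of cone integrals, so by Theorem~\ref{basic properties of cone integrals}(2) the abscissa of convergence of the local factor at every prime $p$ is strictly below $\alpha_{S,E}^{*}$. For $\zeta_{E\curvearrowright V}^\s(s)$ and $\zeta_{F\curvearrowright H_0(E,V)}^\n(s)$, I would use the explicit expressions as finite products of (shifted) Riemann and Dedekind zeta functions displayed before Remark~\ref{remark on the abscissa for virtually abelian groups=h+1}. The local factor $(1-p^{-(s-j)})^{-1}$ of $\zeta(s-j)$ has abscissa $j$ while the global pole sits at $s=j+1$, and the local factor $\prod_{\mathfrak{p}\mid p}(1-\mathbf{N}\mathfrak{p}^{-(n_i(s-1)-j)})^{-1}$ of $\zeta_{K_i}(n_i(s-1)-j)$ has abscissa $1+j/n_i$, strictly below the global value $1+(j+1)/n_i$. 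Taking maxima term-by-term then shows that at every prime the abscissa of the local factor of $\zeta_{F\curvearrowright V}^*$ lies strictly below $\alpha_{F\curvearrowright V}^*$.

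The abscissa-comparison principle itself is then a short argument. Let $Z_1$ and $Z_2$ be the two series and let $P$ be the finite set outside of which their local factors agree. Set $Z^{(P)}(s):=\prod_{p\notin P}L_{1,p}(s)=\prod_{p\notin P}L_{2,p}(s)$, with abscissa $\alpha^{(P)}$. Writing $Z_i(s)=Z^{(P)}(s)\cdot\prod_{p\in P}L_{i,p}(s)$ and using that the abscissa of a product of Dirichlet series with non-negative coefficients is the maximum of the abscissae of the factors, one gets
\[
\alpha_{Z_i}\;=\;\max\bigl(\alpha^{(P)},\,\max_{p\in P}\beta_{i,p}\bigr),
\]
where $\beta_{i,p}$ denotes the abscissa of the local factor $L_{i,p}$. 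Our hypothesis $\beta_{i,p}<\alpha_{Z_i}$ forces the maximum to be attained at $\alpha^{(P)}$, so $\alpha_{Z_1}=\alpha^{(P)}=\alpha_{Z_2}$. Applying this to the two pairs $(\zeta_{S,E}^\s,\zeta_{E\curvearrowright V}^\s)$ and $(\zeta_{S,E}^\n,\zeta_{F\curvearrowright H_0(E,V)}^\n)$ with $P$ the finite exceptional set supplied by Proposition~\ref{main result of dSMS} yields both equalities.

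The only delicate point is the abscissa bound at each prime for the local factors of $\zeta_{F\curvearrowright V}^*$: one must check that the maximum over the factors $\zeta(s-j)$ and $\zeta_{K_i}(n_i(s-1)-j)$ of the local abscissae stays strictly below the corresponding maximum of their global abscissae. This reduces to the two elementary inequalities $j<j+1$ and $j/n_i<(j+1)/n_i$, but requires keeping track of all summands simultaneously in the max.
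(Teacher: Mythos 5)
Your proposal is correct and follows the same route as the paper: express both series, note via Proposition~\ref{main result of dSMS} that their local factors agree at almost all primes, and then argue that each of the finitely many exceptional local factors has abscissa strictly below the respective global abscissa (for $\zeta_{S,E}^*$ by Theorem~\ref{main resulta of Su} and Theorem~\ref{basic properties of cone integrals}(2); for $\zeta_{E\curvearrowright V}^\s$ and $\zeta_{F\curvearrowright H_0(E,V)}^\n$ by inspecting the explicit product of shifted Riemann/Dedekind zeta functions). You merely spell out in more detail the ``disregard a finite number of local factors'' step and the elementary local-abscissa computations that the paper leaves implicit.
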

 \begin{proof}
By Theorem \ref{main resulta of Su} and Theorem \ref{basic properties of cone integrals}(2), we can disregard a finite number of local factors in the computation of $\alpha_{S,E}^*$. By the definition of  $\zeta_{E\curvearrowright V}^\s(s)$ and $\zeta_{F\curvearrowright H_0(E,V)}^\n(s)$, we can also disregard a finite number of local factors in the computation of $\alpha_{E \curvearrowright V}^\s$ or $\alpha_{F\curvearrowright H_0(E,V)}^\n$. Thus, the corollary follows from Proposition \ref{main result of dSMS}. 
 \end{proof}
 \begin{corollary}
Either $\alpha_G^\s=\alpha_T^\s$ or else $\alpha_G^\s=\alpha_T^\s+1$. Moreover, the latter occurs if and only if some element of $F$  acts as minus the identity on $T$. 
 \end{corollary}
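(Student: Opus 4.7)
The plan is to combine the finite-sum decomposition of $\zeta_G^\s(s)$ over subgroups $E\leq F$, the previous corollary identifying $\alpha_{S,E}^\s$ with $\alpha_{E\curvearrowright V}^\s$, and the dichotomy recorded in Remark \ref{remark on the abscissa for virtually abelian groups=h+1}.

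First, I would recall that
\begin{align*}
\zeta_G^\s(s)=\sum_{E\leq F}[F:E]^{-s}\zeta_{S,E}^\s(s).
\end{align*}
Since $F$ is finite, this is a finite sum of Dirichlet series with non-negative coefficients, so the abscissa of convergence of the left-hand side equals the maximum of the abscissae of convergence of the summands; hence
\begin{align*}
\alpha_G^\s=\max_{E\leq F}\alpha_{S,E}^\s=\max_{E\leq F}\alpha_{E\curvearrowright V}^\s,
\end{align*}
where the last equality comes from Corollary \ref{the abscissa of convergence of virtually abelian groups}.

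Next, I would apply Remark \ref{remark on the abscissa for virtually abelian groups=h+1} to each subgroup $E\leq F$ acting on $V$: each quantity $\alpha_{E\curvearrowright V}^\s$ is either $\leq \dim V=h$ or equal to $h+1$, the latter occurring precisely when $E$ acts non-trivially on $V$ with every element acting as $\pm\on{id}$. Taking $E=1$ (for which $V$ is entirely fixed, so $k_0=h$), we obtain $\alpha_{1\curvearrowright V}^\s=h=\alpha_T^\s$ by Example \ref{zeta function of Z^d}. Therefore $h\leq \alpha_G^\s\leq h+1$, which gives the first assertion $\alpha_G^\s\in\{\alpha_T^\s,\alpha_T^\s+1\}$.

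For the equivalence, I would argue both implications. If $\alpha_G^\s=h+1$, then some $E\leq F$ realises $\alpha_{E\curvearrowright V}^\s=h+1$; the remark then furnishes an element of $E\subseteq F$ acting as $-\on{id}$ on $V$, and hence on $T$ (since $T$ is a lattice in $V$). Conversely, if some $f\in F$ acts as $-\on{id}$ on $T$, then on $V$ as well, and the cyclic subgroup $E=\langle f\rangle\leq F$ acts non-trivially on $V$ with every element equal to $\pm\on{id}$, so $\alpha_{E\curvearrowright V}^\s=h+1$ by the remark, forcing $\alpha_G^\s=h+1$. No step is really a serious obstacle here: everything is already in place from the preceding corollaries and the remark, and the only mild care needed is to track that Remark \ref{remark on the abscissa for virtually abelian groups=h+1} applies to an \emph{arbitrary} finite group acting on $V$, so it is legitimate to invoke it with $E$ in place of $F$.
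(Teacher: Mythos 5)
Your proof is correct and follows essentially the same route as the paper: decompose $\zeta_G^\s$ over $E\leq F$, take the maximum of abscissae, translate via Corollary \ref{the abscissa of convergence of virtually abelian groups} to $\alpha_{E\curvearrowright V}^\s$, and invoke the dichotomy of Remark \ref{remark on the abscissa for virtually abelian groups=h+1}. The only (cosmetic) difference is in the converse direction, where the paper first replaces $f$ by a power to arrange $f^2=1$ before forming $E=\langle f\rangle$, whereas you observe directly that every power of $f$ already acts as $\pm\on{id}$, which is a slightly cleaner way to meet the hypothesis of the Remark.
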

 \begin{proof}
We have $\alpha_G^\s=\max\{\alpha_{S,E}^\s: E\leq F\}$ and we know that $\alpha_G^\s\geq\alpha_T^\s=h$, where $h$ is the rank of $T$.
By Corollary \ref{the abscissa of convergence of virtually abelian groups} and Remark \ref{remark on the abscissa for virtually abelian groups=h+1} we find that $\alpha_{S,E}^\s$ is either $h+1$ or $\alpha_{S,E}^\s\leq h$. This proves that $\alpha_G^\s$ is either $h$ or $h+1$. If $\alpha_G^\s=h+1$, then $\alpha_{S,E}^\s=h+1$ for some non-trivial $E\leq F$, and by Remark \ref{remark on the abscissa for virtually abelian groups=h+1} there is a non-trivial element of $E$ that acts on $T$ as minus the identity. Conversely, if there is $f\in F$ that acts on $T$ as minus the identity, then we can replace $f$ by some power and assume that $f^2=1$. If $E=\langle f\rangle$, then $\alpha_{S,E}^\s=h+1$ by Remark \ref{remark on the abscissa for virtually abelian groups=h+1}.  
 \end{proof} 
\begin{proposition}\label{theorem for virtually abelian groups}
For each $i\in\{1,2\}$, let $G_i$ be a finitely generated virtually abelian group with torsion-free Fitting subgroup $T_i$, and let $S_i:1\to T_i\to G_i\rightarrow F_i\to 1$ be the induced group extension. 
Assume that there is a $\mb{C}$-linear isomorphism $\alpha:T_1\otimes_\mb{Z}\mb{C}\to T_2\otimes_\mb{Z} \mb{C}$ and a group isomorphism $\gamma:P_1\to P_2$ such that $\alpha(f\cdot v)=\gamma(f)\cdot \alpha(v)$ for all $v\in T_1\otimes_\mb{Z}\mb{C}$ and $f\in F_1$. Then for each $E_1\leq F_1$ it holds that $\alpha_{S_1,E_1}^{\s}=\alpha_{S_2,\gamma(E_1)}^{\s}$, and for each $E_1\lhd F_1$ it holds that $\alpha_{S_1,E_1}^{\n}=\alpha_{S_2,\gamma(E_1)}^\n$. In particular, $\alpha_{G_1}^\s=\alpha_{G_2}^\s$ and $\alpha_{G_1}^\n=\alpha_{G_2}^\n$.
    \end{proposition}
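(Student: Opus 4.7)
The plan is to reduce the proposition directly to Corollary \ref{the abscissa of convergence of virtually abelian groups} and Corollary \ref{coro: invariance of a(F,V)}, with the isomorphism $\gamma$ doing nothing more than relabelling the acting group. Set $V_i := T_i\otimes_\mb{Z}\mb{Q}$, so that $V_i\otimes_\mb{Q}\mb{C}\cong T_i\otimes_\mb{Z}\mb{C}$ carries a natural $F_i$-action, and the hypothesis says that $\alpha : V_1\otimes_\mb{Q}\mb{C}\to V_2\otimes_\mb{Q}\mb{C}$ is a $\mb{C}$-linear isomorphism satisfying $\alpha(f\cdot v)=\gamma(f)\cdot\alpha(v)$. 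Equivalently, if we pull back the $F_2$-action via $\gamma$, then $\alpha$ becomes an isomorphism of $\mb{C}[F_1]$-modules. Note also that $\gamma$ sends subgroups of $F_1$ bijectively onto subgroups of $F_2$ and preserves normality.

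For the subgroup case, I would fix $E_1\leq F_1$ and set $E_2:=\gamma(E_1)$. By Corollary \ref{the abscissa of convergence of virtually abelian groups} it suffices to show $\alpha_{E_1\curvearrowright V_1}^\s=\alpha_{E_2\curvearrowright V_2}^\s$. The definition of $\zeta_{F\curvearrowright V}^*(s)$ (hence of its abscissa) depends only on the Wedderburn decomposition of $\mb{Q}[F]$ and the resulting decomposition of $V$, so it is transported by the group isomorphism $\gamma|_{E_1}:E_1\xrightarrow{\sim}E_2$: writing $V_2^\gamma$ for $V_2$ regarded as a $\mb{Q}[E_1]$-module via $\gamma|_{E_1}$, we have $\alpha_{E_2\curvearrowright V_2}^\s=\alpha_{E_1\curvearrowright V_2^\gamma}^\s$. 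On the other hand, $\alpha$ restricts to a $\mb{C}[E_1]$-isomorphism $V_1\otimes_\mb{Q}\mb{C}\cong V_2^\gamma\otimes_\mb{Q}\mb{C}$, so Corollary \ref{coro: invariance of a(F,V)} gives $\alpha_{E_1\curvearrowright V_1}^\s=\alpha_{E_1\curvearrowright V_2^\gamma}^\s$, and the two equalities combine to give the result.

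The normal case is almost identical. Fix $E_1\lhd F_1$ and set $E_2:=\gamma(E_1)\lhd F_2$. Since $\mb{C}$ is flat over $\mb{Q}$ and $H_0(E,-)$ is a cokernel, one has a canonical $\mb{C}[F_i]$-isomorphism $H_0(E_i,V_i)\otimes_\mb{Q}\mb{C}\cong H_0(E_i,V_i\otimes_\mb{Q}\mb{C})$. The intertwining relation $\alpha(f\cdot v)=\gamma(f)\cdot\alpha(v)$ implies that $\alpha$ descends to an isomorphism of $E_1$-coinvariants
\[
H_0(E_1,V_1\otimes_\mb{Q}\mb{C})\cong H_0(E_2,V_2\otimes_\mb{Q}\mb{C})
\]
compatible with the $F_1$-action (after pulling back the $F_2$-action via $\gamma$). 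Transporting by $\gamma$ as before and applying Corollary \ref{coro: invariance of a(F,V)} yields $\alpha_{F_1\curvearrowright H_0(E_1,V_1)}^\n=\alpha_{F_2\curvearrowright H_0(E_2,V_2)}^\n$, which equals $\alpha_{S_2,E_2}^\n$ by Corollary \ref{the abscissa of convergence of virtually abelian groups}.

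The ``in particular'' follows at once: since $F_i$ is finite, each factor $[F_i:E]^{-s}$ is entire, so the displayed decompositions of $\zeta_{G_i}^\s(s)$ and $\zeta_{G_i}^\n(s)$ give $\alpha_{G_i}^*=\max_{E} \alpha_{S_i,E}^*$, and the bijection $E_1\mapsto\gamma(E_1)$ on (normal) subgroups matches these maxima. The only real content is the observation that the invariant $\alpha_{F\curvearrowright V}^*$ depends only on the isomorphism class of the abstract pair (group, module over it), together with Corollary \ref{coro: invariance of a(F,V)}; I do not foresee a genuine obstacle, just careful bookkeeping with $\gamma$.
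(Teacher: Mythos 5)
Your proof is correct and follows exactly the same route as the paper: the paper's proof is the one-liner "This follows from Corollary \ref{the abscissa of convergence of virtually abelian groups} and Corollary \ref{coro: invariance of a(F,V)}," and you have simply filled in the straightforward transport-by-$\gamma$ bookkeeping and the flatness identification $H_0(E,V)\otimes_\mb{Q}\mb{C}\cong H_0(E,V\otimes_\mb{Q}\mb{C})$ that the paper leaves implicit.
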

\begin{proof}
This follows from Corollary \ref{the abscissa of convergence of virtually abelian groups} and Corollary \ref{coro: invariance of a(F,V)}.
\end{proof}

\medskip

\subsection{The $R$-Mal'cev completion for virtually nilpotent groups}
Proposition \ref{theorem for virtually abelian groups} is our version of Theorem \ref{main theorem} for virtually abelian groups, and we want to formulate a similar result for virtually nilpotent groups. To do this we recall the notion of Mal'cev completion for virtually nilpotent groups \cite[Section 1]{Su}.
We begin by reviewing the definition of nilpotent $R$-powered groups.
\begin{definition}
Let $c\in\mb{N}$. A commutative ring $R$ is said to be {\em $c$-binomial} if $R\to R\otimes_\mb{Z}\mb{Q}$ is injective and if $\binom{r}{k}:=\frac{r(r-1)\cdots (r-k+1)}{k!}$ belongs to $R$ for all $r\in R$ and $k=1,\ldots,c$. For such a ring, a nilpotent group $N$ of nilpotency class $\leq c$ is said to be $R$-powered if for all $r\in R$ and $n\in N$, an element $n^r\in N$ has been defined such that the following holds:
\begin{enumerate}[(i)]
\item $n^1=n$, $n^{r_1+r_2}=n^{r_1}n^{r_2}$, $(n^{r_1})^{r_2}=n^{r_1r_2}$ for all $n\in N$, $r_1,r_2\in R$.
\item $m^{-1}n^rm=(m^{-1}nm)^r$ for all $m,n\in N$, $r\in R$.
\item The Hall-Petresco formula holds for all $k$-tuples $(n_1,\ldots,n_k)$ of elements of $N$ and all $r\in R$ \cite[Chap., 6]{War}  .
\end{enumerate}
Note that (iii) makes sense by \cite[Theorem 6.1]{War} since $N$ has nilpotency class $\leq c$ and therefore only the first $c$ binomials $\binom{r}{1},\ldots\binom{r}{c}$ appear in the formula.

A morphism $\varphi:N\to M$ of nilpotent $R$-powered groups of nilpotency class $\leq c$ is a group homomorphism such that $\varphi(n^r)=\varphi(n)^r$ for all $n\in N$ and $r\in R$. They will be called $R$-morphisms.
\end{definition}
\begin{definition}
Let $N$ be a $\mf{T}$-group, say of nilpotency class $c$. Let $R$ be a $c$-binomial ring.
The $R$-Mal'cev completion of $N$ is a nilpotent $R$-powered group $N^R$ 
(necessarily of the same nilpotency class as $N$) together with a homomorphism $\iota:N\to N^R$ satisfying the following universal property: if $M$ is another nilpotent $R$-powered group of nilpotency class $\leq c$ and $\varphi:N\to M$ is a group homomorphism, then there exists a unique $R$-morphism $\tilde{\varphi}:N^R\to M$ such that $\tilde{\varphi}\circ\iota=\varphi$.
\end{definition}
The theory of nilpotent $R$-powered groups, in particular the proof of the existence of the $R$-Mal'cev completion for $\mf{T}$-groups, is expounded in \cite[Chapters 10 and 11]{War} under the assumption that $R$ is a binomial domain (i.e.\ $\binom{r}{k}\in R$ for all $r\in R$ and all $k\in\mb{N}$). However,  everything can be extended without further modifications to $c$-binomial rings in the case of nilpotency class $\leq c$.  
\begin{remark}
The unipotent group scheme $\mf{N}$ over $\mb{Q}$ defined by a $\mf{T}$-group $N$ is precisely the group scheme that represents the functor $K\mapsto N^K$ from commutative $\mb{Q}$-algebras to groups. If $\mf{N}_1$ and $\mf{N}_2$ are the unipotent group schemes over $\mb{Q}$ defined respectively by two $\mf{T}$-groups $N_1$ and $N_2$, then $\mf{N}_1$ and $\mf{N}_2$ are isomorphic after base change with a field $K\supset\mb{Q}$ if and only if $N_1^K$ and $N_2^K$ are isomorphic as nilpotent $K$-powered groups.
\end{remark}
\begin{definition}
Given $c\in\mb{N}$ and a $c$-binomial ring $R$, we define a category $\mc{V}_{c,R}$ as follows. The objects are group extensions $S:1\to N\to  G\to F\to 1$, where $N$ is a nilpotent $R$-powered group of nilpotency class $\leq c$, $F$ is a finite group, and it is required that for any $g\in G$, the automorphism of $N$ induced by conjugation by $g$ is an $R$-automorphism.
The morphisms in $\mc{V}_{c,R}$ are morphisms of short exact sequences of groups $(u,v,w):{S}\to {S'}$ such that $u$ is an $R$-morphism. We also call them $R$-morphisms.
\end{definition} 
\begin{definition} 
Let $S:1\to N\xrightarrow{\iota} G\xrightarrow{\pi} F\to 1$ be an object of $\mc{V}_{c,\mb{Z}}$, where $N$ is a $\mf{T}$-group, and let $R$ be a $c$-binomial ring. The $R$-Mal'cev completion of $S$ is an object $S^R$ of $\mc{V}_{c,R}$ together with a morphism of short exact sequences $(i,j,k):S\to S^R$ satisfying the following universal property: if $(u,v,w):S\to T$ is a morphism of short exact sequences, where $T$ is an object of $\mc{V}_{c,R}$, then there exists a unique $R$-morphism $(\tilde{u},\tilde{v},\tilde{w}):S^R\to T$ such that $(\tilde{u},\tilde{v},\tilde{w})\circ (i,j,k)=(u,v,w)$.
\end{definition}  
The following construction of the $R$-Mal'cev completion of $S$ was given in \cite[Section 1]{Su} under the assumption that $R$ is binomial. However, everything remains valid in our situation. We may assume that $\iota$ is an inclusion and that $\pi$ is a quotient map. Let $s:F\to G$ be a section of $\pi$ (i.e.\ $\pi\circ s=\on{id}_F$) such that $s(1)=1$ and $s(f^{-1})=s(f)^{-1}$ for all $f\in F$. Then there are maps $\sigma:F\to \on{Aut}(N)$ and $\psi:F\times F\to N$ such that for all $f,f'\in F$ and $n\in N$ we have 
$\sigma(f)(n)=s(f)ns(f)^{-1}$ and $s(f)s(f')=\psi(f,f')s(ff')$.
The pair $(\sigma,\psi)$ is called {\em the cocycle associated to $S$ and the section $s$}. It satisfies the following cocycle conditions:
\begin{align}\label{cocycle conditions}
\sigma(f)\sigma(f')&=\mu(\psi(f,f'))\sigma(ff')\quad\forall f,f'\in F\\
\nonumber \psi(f,f')\psi(ff',f'')&=\sigma(f)(\psi(f,f'))\psi(f,f'f'')\quad\forall f,f',f''\in F.
\end{align}
The group $G$ can be identified with the group $N\times_{(\sigma,\psi)} F$ whose underlying set is $N\times F$ and where the operations are given by
\begin{align}\label{operations in NxF}
(n,f)\cdot (n',f')=(n\sigma(f)(n')\psi(f,f'),ff').
\end{align}
Under this identification, $N$ becomes $N\times \{1\}$. 

We now consider the $R$-Mal'cev completion $N\hookrightarrow N^R$ of $N$. Note that $\sigma$ can be extended to a map $F\to\on{Aut}_R(N)$, and $\psi$ can be seen as a map $F\times F\to N^R$. We still denote these extensions by $\sigma$ and $\psi$. We obtain a group $N^R\times_{(\sigma,\psi)} F$ whose underlying set is $N^R\times F$ and where the operations are given by (\ref{operations in NxF}). Indeed, we get a group extension $1\to N^R\to N^R\times_{(\sigma,\psi)} F\to F\to 1$, which is an object in $\mc{V}_{c,R}$. This extension together with the inclusions $N\hookrightarrow N^R$, $N\times_{(\sigma,\psi)} F\hookrightarrow N^R\times_{(\sigma,\psi)} F$, $F=F$, is in fact the $R$-Mal'cev completion $S^R$ of $S$.
\begin{remark}\label{description of an iso between exact sequences}
For $i\in\{1,2\}$ let $G_i$ be a finitely generated virtually nilpotent group with torsion-free Fitting subgroup $N_i$, and let $S_i:1\to N_i\to G_i\to F_i\to 1$ be the associated group extension. We may assume that $G_i=N_i\times_{(\sigma_i,\psi_i)} F_i$, with $(\sigma_i,\psi_i)$ satisfying the cocycle conditions (\ref{cocycle conditions}). Let $c\in\mb{N}$ be an upper bound for the nilpotency classes of $N_1$ and $N_2$, and let $R$ be a $c$-binomial domain. We describe what an $R$-isomorphism between $S_1^R$ and $S_2^R$ is. By definition this is a morphism of exact sequences $(u,v,w):S_1^R\to S_2^R$, where $u$ is an $R$-isomorphism and $w:F_1\to F_2$ is an isomorphism. Note that $v$ has the form
\begin{align}\label{form of beta}
v(n,f)=(u(n)\tau(f),w(f))
\end{align}
for some map $\tau:F_1\to N_2^R$. One can easily check that given an $R$-isomorphism $u:N_1^R\to N_2^R$, a group isomorphism $w:F_1\to F_2$, and a map $\tau:F_1\to N_2^R$, if we define $v$ as in (\ref{form of beta}), then $(u,v,w)$ is an $R$-isomorphism between $S_1^R$ and $S_2^R$ if and only if for all $f,f'\in F$ and  $n\in N$, 
\begin{align}\label{condition on tau}
u(\sigma_1(f)(n))u(\psi(f,f'))\tau(ff')=\tau(f)\sigma_2(w(f))(u(n))\sigma_2(w(f))(\tau(f'))\psi_2(w(f),w(f')).
\end{align}
\end{remark}
\begin{remark}
Let $G$ be a finitely generated virtually nilpotent group with torsion-free Fitting subgroup $N$, and let $S:1\to N\to G\to F\to 1$ be the associated group extension. 
We may assume that $G=N\times_{(\sigma,\psi)} F$, with $(\sigma,\psi)$ satisfying the cocycle condition (\ref{cocycle conditions}). For each $\mb{Q}$-commutative algebra $K$ we define $\mf{G}(K):=N^K\times_{(\sigma,\psi)} F$. Then $\mf{G}$ is an affine group scheme over $\mb{Q}$ isomorphic as scheme to $\dot\cup_{f\in F}\mb{A}_\mb{Q}^{h}$, where $h:=h(N)$. It has $\mf{N}$, the unipotent group scheme defined by $N$, as its connected component. It is easy to check that $\mf{G}$ is well-defined up to $\mb{Q}$-isomorphism. 
We call $\mf{G}$ the group scheme defined by $G$.  
Let $G'$ be another finitely generated virtually nilpotent group with torsion-free Fitting subgroup $N'$, let $S'$ be the associated group extension, and let $\mf{G}'$ be the group scheme over $\mb{Q}$ defined by $G'$. It is not difficult to show that $\mf{G}$ and $\mf{G'}$ are isomorphic after base change with a field $K\supset\mb{Q}$ if and only if $S_1^K$ and $S_2^K$ are $K$-isomorphic.
\end{remark}
\medskip

\subsection{Theorem \ref{main theorem} for virtually nilpotent groups}
We can now state the analogous of Theorem \ref{main theorem} for virtually nilpotent groups, which also extends Proposition \ref{theorem for virtually abelian groups}. 
\begin{theorem}\label{main theorem for virtually nilpotent}
Let $G_i$ be a finitely generated virtually nilpotent group with torsion-free Fitting subgroup $N_i$ and let $S_i:1\to N_i\to G_i\to F_i\to 1$ be the induced group extension for $i=1,2$. Assume that there is a $\mb{C}$-isomorphism $(u,v,w):S_1^{\mb{C}}\cong S_2^{\mb{C}}$. Then for each $E_1\leq F_1$ we have $\alpha_{S_1,E_1}^{\s}=\alpha_{S_2,w(E_1)}^{\s}$, and for each $E_1\lhd F_1$ we have $\alpha_{S_1,E_1}^{\n}=\alpha_{S_2,w(E_1)}^{\n}$.
\end{theorem}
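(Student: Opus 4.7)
The strategy parallels the proof of Theorem \ref{main theorem: general version}. By Theorem \ref{main resulta of Su}, for each $E_i\leq F_i$ there is cone integral data $\mc{D}_{E_i}^{*}$ over $\mb{Q}$ with $\zeta_{S_i,E_i}^{*}(s)=Z_{\mc{D}_{E_i}^{*}}(s-h(N_i)-|E_i|+1)$. The existence of a $\mb{C}$-isomorphism forces $h(N_1)=h(N_2)$, and $|w(E_1)|=|E_1|$ since $w$ is bijective, so it suffices to prove $\alpha_{\mc{D}_{E_1}^{*}}=\alpha_{\mc{D}_{w(E_1)}^{*}}$.

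First I would establish a descent lemma analogous to Lemma \ref{an algebraic geometry lemma}: the $\mb{C}$-isomorphism $(u,v,w):S_1^{\mb{C}}\cong S_2^{\mb{C}}$ descends to a $K$-isomorphism $S_1^{K}\cong S_2^{K}$ for some number field $K\supset\mb{Q}$. Using the parametrization in Remark \ref{description of an iso between exact sequences}, fix a group isomorphism $w:F_1\to F_2$ (only finitely many choices); then the data of an $R$-isomorphism reduces to a pair $(u,\tau)$, where $u:\mf{N}_{1}\to\mf{N}_{2}$ is an isomorphism of $R$-group schemes and $\tau:F_1\to \mf{N}_2(R)$ is a map, satisfying the polynomial identities \eqref{condition on tau}. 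For each fixed $w$, this defines a closed subscheme of $\on{Isom}(\mf{N}_1,\mf{N}_2)\times_{\mb{Q}}\mf{N}_2^{F_1}$, which is an affine $\mb{Q}$-scheme of finite type. The existence of a $\mb{C}$-point guarantees a closed point, whose residue field $K$ is a number field providing the sought $K$-isomorphism.

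Next I would appeal to Corollary \ref{coro:basic properties of cone integrals}(2) applied with the common shift $h=h(N_1)+|E_1|-1$: it suffices to show that, viewing $\mc{D}_{E_1}^{*}$ and $\mc{D}_{w(E_1)}^{*}$ as cone integral data over $K$, the local integrals $Z_{\mc{D}_{E_1}^{*}}(s,\p)$ and $Z_{\mc{D}_{w(E_1)}^{*}}(s,\p)$ agree for almost all maximal ideals $\p\subset\mc{O}_K$. Inspecting the construction in \cite{Su}, the cone integral data $\mc{D}_{E_i}^{*}$ is built from the structure constants of a $\mb{Z}$-Lie lattice in $\mf{n}_i$ together with polynomial data encoding the cocycle $(\sigma_i,\psi_i)$ and the distinguished set of coset representatives of $E_i$. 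The $K$-isomorphism $(u,v,w)$ identifies these two packages of data over $K$ up to a $K$-linear change of variables which, for almost all $\p$, is $\mc{O}_\p$-integral and invertible (the same denominator-clearing argument as in the proof of Corollary \ref{invariance of a_L and b_L by commmensurability}). This yields the required matching of local integrals.

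The main obstacle, I expect, is the careful bookkeeping in the second step: unlike in Theorem \ref{main theorem: general version}, where only the Lie algebra structure enters the cone integral, here the cocycle $(\sigma_i,\psi_i)$ and the coset data of $E_i$ also contribute, and one must verify that every ingredient of the construction in \cite{Su} transforms compatibly under base change from $\mb{Q}$ to $K$ and under a $K$-isomorphism of extensions. Once this functoriality is spelled out, no essentially new idea beyond those used in the proof of Theorem \ref{main theorem: general version} is required, and the conclusion follows for both $*=\leq$ and $*=\lhd$ (the normal case uses that the normality condition translates, under the cocycle description, into further polynomial constraints that are preserved by the $K$-isomorphism whenever $E_1\lhd F_1$).
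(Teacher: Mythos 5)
Your proposal takes essentially the same route as the paper: descend the $\mb{C}$-isomorphism to a number field $K$ via a representability argument (your construction of a closed subscheme of $\on{Isom}(\mf{N}_1,\mf{N}_2)\times_{\mb{Q}}\mf{N}_2^{F_1}$ for fixed $w$ is exactly the paper's Lemma~\ref{algebraic geometry lemma 2}), then reduce via Corollary~\ref{coro:basic properties of cone integrals}(2) to matching local cone integrals over almost all $\p\subset\mc{O}_K$, and obtain the matching from the fact that the induced $\mc{O}_\p$-level isomorphism exists for almost all $\p$ (the denominator-clearing argument).

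Where your proposal and the paper diverge is precisely the step you flag as ``the main obstacle,'' and there I would push back on your phrasing. You describe the identification of the two cone integral data as ``a $K$-linear change of variables'' inherited from $u$, but the isomorphism of extensions acts through the Mal'cev coordinates and the cocycle data $(\sigma_i,\psi_i,\tau)$, which is polynomial and not linear in the coordinates entering Su's construction, and it also shuffles the coset data of $E_i$ through $w$. Trying to track this directly inside the integral is exactly the bookkeeping you acknowledge not having done. The paper sidesteps it: Proposition~\ref{cone integral data for virtually nilpotent groups} upgrades the statement from \cite{Su} to a base-change-compatible local one, namely $Z_{\mc{D}_E^*\otimes_\mb{Q}K}(s-h-|E|+1,\p)$ is, up to the explicit factor $(1-\mathbf{N}\p^{-1})^{\pm h}$, the zeta function $\zeta_{S^{\mc{O}_\p},E}^*(s)$ of the $\mc{O}_\p$-powered extension $S^{\mc{O}_\p}$. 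Once one has that, the $\mc{O}_\p$-isomorphism $S_1^{\mc{O}_\p}\cong S_2^{\mc{O}_\p}$ (valid for almost all $\p$, by ensuring $u(x_i)$, $u^{-1}(y_j)$ and the $\tau(f)$ all lie in the $\mc{O}_\p$-points) gives the equality $\zeta_{S_1^{\mc{O}_\p},E_1}^*(s)=\zeta_{S_2^{\mc{O}_\p},w(E_1)}^*(s)$ outright by a bijection of the objects being counted, with no change of variables to chase. So the structure of your argument is right and the descent lemma is on target, but the crucial step needs the analogue of Corollary~\ref{existence of the cone integral data} for extensions, i.e.\ Proposition~\ref{cone integral data for virtually nilpotent groups}, to be stated and used; without it the ``change of variables'' claim does not by itself close the proof.
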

The rest of the section is devoted to the proof of this theorem. The idea is similar to that of the proof of Theorem \ref{main theorem: general version} given in Section \ref{Sec: Proof of main theorem for rings}.
\begin{lemma}\label{algebraic geometry lemma 2}
For $i\in\{1,2\}$, let $G_i$ be a finitely generated virtually nilpotent group with torsion-free Fitting subgroup $N_i$, and let $S_i:1\to N_i\to G_i\to F_i\to 1$ be the associated group extension. Assume that $F_1\cong F_2$ and let $w_0:F_1\to F_2$ be an isomorphism.
If for some field extension $K\supset \mb{Q}$  there is $K$-isomorphism $(u,v,w):S_1^K\cong S_2^K$ with $w=w_0$, then this also holds for some number field.
\end{lemma}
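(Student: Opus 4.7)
The plan is to mimic the proof of Lemma \ref{an algebraic geometry lemma}: cast the problem as the question of whether a certain scheme of finite type over $\mb{Q}$ has a closed point, and use the hypothesis to verify that the scheme is non-empty.

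First I would use Remark \ref{description of an iso between exact sequences} to rewrite the data. Once $w_0$ is fixed, giving a $K$-isomorphism $(u,v,w):S_1^K\cong S_2^K$ with $w=w_0$ is the same as giving a pair $(u,\tau)$, where $u:N_1^K\to N_2^K$ is a $K$-isomorphism and $\tau:F_1\to N_2^K$ is a set map, subject to the finitely many polynomial identities (\ref{condition on tau}), one for each $(f,f')\in F_1\times F_1$ (applied to the generic element $n$, which itself is a universal expression in coordinates on $N_1$). So the entire datum is algebraic, and only finitely many equations are involved.

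Next I would package this as a functor. Let $\mf{N}_1,\mf{N}_2$ denote the unipotent group schemes over $\mb{Q}$ attached to $N_1$ and $N_2$; by the remark already recorded in the paper, these represent the functors $R\mapsto N_i^R$ on commutative $\mb{Q}$-algebras. Define $I$ on commutative $\mb{Q}$-algebras by
\begin{align*}
I(R):=\left\{(u,\tau)\ \middle|\ u\in\underline{\on{Iso}}(\mf{N}_1,\mf{N}_2)(R),\ \tau\in\mf{N}_2(R)^{F_1},\ \text{and (\ref{condition on tau}) holds}\right\}.
\end{align*}
Since $\mf{N}_1,\mf{N}_2$ are affine of finite type over $\mb{Q}$, the functor $\underline{\on{Hom}}(\mf{N}_1,\mf{N}_2)$ of group scheme morphisms is representable by a scheme of finite type over $\mb{Q}$; isomorphism is an open condition (inverting the induced map on coordinate rings), and (\ref{condition on tau}) translates to a closed subscheme after taking the product with $\mf{N}_2^{|F_1|}$. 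Hence $I$ is representable by a scheme $\mc{I}$ of finite type over $\mb{Q}$.

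Finally, the hypothesis provides a $K$-point of $\mc{I}$ for some field extension $K\supset\mb{Q}$, so $\mc{I}$ is non-empty. Any scheme of finite type over $\mb{Q}$ that is non-empty has a closed point, whose residue field is a finite extension of $\mb{Q}$, i.e.\ a number field $K'$. A $K'$-point of $\mc{I}$ is by definition a pair $(u,\tau)$ giving a $K'$-isomorphism $S_1^{K'}\cong S_2^{K'}$ with $w$-component equal to $w_0$, which is what we want. The only step requiring care is the representability claim in the second paragraph; the main technical point there is to check that the condition (\ref{condition on tau}), which involves the generic element $n\in N_1^R$, really amounts to finitely many algebraic equations in the parameters $(u,\tau)$, but this is immediate from the fact that a homomorphism of unipotent group schemes is determined by a finite amount of polynomial data.
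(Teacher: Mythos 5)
Your proof is correct and takes essentially the same route as the paper: both encode the pair $(u,\tau)$ with $w$ fixed as $w_0$ (via Remark \ref{description of an iso between exact sequences} and condition (\ref{condition on tau})) into a functor represented by a scheme of finite type over $\mb{Q}$, observe it is non-empty from the hypothesis, and take the residue field of a closed point. The only presentational difference is that the paper realizes the representing scheme explicitly as a closed subscheme of $\on{GL}_h\times\prod_{f\in F_1}\mb{A}^h$ via the $\log$ isomorphism and the finiteness of the Baker--Campbell--Hausdorff formula, whereas you invoke representability of the morphism functor directly and handle invertibility as an open condition; both amount to the same finite algebraicity.
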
 
\begin{proof}
We can assume that $G_i=N_i\times_{(\sigma_i,\psi_i)} F_i$, with $(\sigma_i,\psi_i)$ satisfying the cocycle condition (\ref{cocycle conditions}). 
Let $I_{w_0}$ be the functor from the category of commutative algebras over $\mb{Q}$ to the category of sets that is defined by:
\begin{align}\label{definition of I_w}
I_{w_0}(K)=\{(u,\tau)\ | \ u:N_1^K\to N_2^K \mbox{ is a } K\mbox{-isomorphism and}\ \tau:F_1\to N_2^K\ \mbox{satisfies  (\ref{condition on tau})}\}.
\end{align}
By using the $\log$ isomorphism between the unipotent group scheme over $\mb{Q}$ defined by $N_i$ and its Lie algebra (viewed as functor on commutative algebras over $\mb{Q}$) and the fact that, owing to nilpotency, the Baker-Campbell-Hausdorff formula is finite, 
 one can easily show that $I_{w_0}$ is represented by a closed subscheme of $\on{GL}_h\times\prod_{f\in F_1}\mb{A}^h\times\on{Spec}(\mb{Q})$, where $h:=h(N_1)=h(N_2)$. The hypothesis implies that  $I_{w_0}$ is not the empty scheme by Remark \ref{description of an iso between exact sequences}. If $K$ is the residue field at a closed point of $I_{w_0}$, then $K$ is a number field and $I_{w_0}(K)\neq\emptyset$. Again by Remark \ref{description of an iso between exact sequences}, a pair $(u,\tau)\in I_{w_0}(K)$ yields a $K$-isomorphism $(u,v,w):S_1^K\to S_2^K$, with $v$ defined by (\ref{form of beta}) and $w=w_0$.
\end{proof} 
 \begin{definition}
Let $R$ be a $c$-binomial ring, and let ${S}: 1\to N\to G\xrightarrow{\pi} F\to 1$ be an object of $\mc{V}_{c,R}$. For each subgroup $E\leq F$ we define the following formal series:
\begin{align}
\zeta_{{S},E}^\s(s):=\sum_{\substack{A\leq_f G:\\
 \pi(A)=E\ \wedge\ A\cap N\leq_R N}}[\pi^{-1}(E):A]^{-s},\quad \zeta_{{S},E}^\n(s):=\sum_{\substack{A\lhd_f G:\\
 \pi(A)=E\ \wedge\ A\cap N\leq_R N}}[\pi^{-1}(E):A]^{-s},
\end{align}
where the notation $A\cap N\leq_R N$ means that $A\cap N$ is an $R$-subgroup of $N$, that is, a subgroup such that $n^r\in A\cap N$ for all $n\in A\cap N$ and $r\in R$. 
 \end{definition}
 \begin{proposition}\label{cone integral data for virtually nilpotent groups}
Let $G$ be a finitely generated virtually nilpotent group with torsion-free Fitting subgroup $N$, say of nilpotency class $c$ and Hirsch length $h$, and let ${S}:1\to N\to G\xrightarrow{\pi} F\to 1$ be the associated extension. Let $*\in\{\leq,\lhd\}$.
Then for each $E\leq F$, with $E$ normal if $*=\lhd$, there exists a cone integral data $\mc{D}_E^*$ over $\mb{Q}$ such that the following holds.
\begin{enumerate}
\item For each prime $p$ we have $\zeta_{{S}^{\mb{Z}_p},E}^*(s)=(1-p^{-1})^h Z_{\mc{D}_E^*}(s-h-|E|+1,p)$.
\item Let $K$ be a number field and $\mc{O}$ its ring of integers. For each maximal ideal $\p\subset\mc{O}$ for which $\mc{O}_\p$ is $c$-binomial (e.g. $c!\notin \p$) we have $\zeta_{{S}^{{\mc{O}}_\p},E}^*(s)=(1-\mathbf{N}\p^{-1})^{-h}Z_{\mc{D}_E^*\otimes_\mb{Q}K}(s-h-|E|+1,\p)$, where $\mc{D}_E^*\otimes_\mb{Q}K$ denotes the collection $\mc{D}_E^*$ viewed as cone integral data over $K$.
\end{enumerate}
\end{proposition}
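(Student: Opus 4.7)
The plan is to adapt the construction of Theorem \ref{main resulta of Su} from \cite{Su} --- which produced cone integral data for the global series $\zeta_{S,E}^*(s)$ --- to the Mal'cev completion $S^R$, and to arrange the construction so that a \emph{single} $\mb{Q}$-polynomial datum $\mc{D}_E^*$ works uniformly across all admissible base rings $R$. The crucial structural fact is that, after fixing a Mal'cev basis of $N$, the entire functor $R\mapsto G^R=N^R\times_{(\sigma,\psi)} F$ is described by polynomial formulas with coefficients in $\mb{Q}[1/c!]$ (the truncated BCH polynomials of degree $\leq c$, the structure constants of $N$, and the cocycle $(\sigma,\psi)$ extended by these polynomials) that are the same regardless of $R$. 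Hence any polynomial divisibility condition we derive lives automatically over $\mb{Q}$.

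I would first, for $R\in\{\mb{Z}_p,\mc{O}_\p\}$, parametrize an admissible $A\leq_f\pi^{-1}(E)$ (with $\pi(A)=E$ and $M:=A\cap N^R\leq_R N^R$) by a pair consisting of an upper-triangular matrix $(m_{ij})\in\on{Tr}_h(R)$ with nonzero diagonal, whose rows form an $R$-basis of $M$, together with a normalized section $\tau:E\to N^R$ with $\tau(1)=0$ such that the elements $(\tau(f),f)$ for $f\in E$ give coset representatives of $A$ above $M$. As in Proposition \ref{expression of zeta functions as cone integrals}, quotienting out the redundancy in the choice of basis produces the normalizing factor appearing in the statement and the integrand $|m_{11}|_\p^{s-1}\cdots|m_{hh}|_\p^{s-h}$ integrated against $|dm|_\p$ on $\on{Tr}_h(\mc{O}_\p)$ and against the additive Haar measure on the $\tau$-variables, which range over $(N^R)^{|E|-1}\cong(\mc{O}_\p^h)^{|E|-1}$.

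The divisibility constraints come from two sources. Closure of $M$ under the $R$-action and under the group commutator (equivalently, under the Lie bracket on $\log N^R$ via BCH) produces the same cone conditions as in the subring/ideal case of Proposition \ref{expression of zeta functions as cone integrals}, with polynomials in $(m_{ij})$ alone. The requirement that the subgroup of $G^R$ generated by $M$ and $\{(\tau(f),f):f\in E\}$ intersects $N^R$ in exactly $M$, together with normality in $G^R$ when $*=\n$, translates via the cocycle relations (\ref{cocycle conditions}) into finitely many polynomial congruences of the form ``$P(\tau,(m_{ij}))$ lies in the $R$-span of the rows of $(m_{ij})$'' with $P$ having coefficients in $\mb{Q}$. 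Integrating out $\tau$ by a routine rescaling keyed to $\det(m_{ij})$ contributes precisely the shift $s\mapsto s-h-|E|+1$ in the exponent and leaves a cone integral in the matrix variables alone, whose data $\mc{D}_E^*$ is defined over $\mb{Q}$ and proves both statements: part (1) with $R=\mb{Z}_p$, and part (2) with $R=\mc{O}_\p$ after base change of the data to $K$.

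The principal technical obstacle is the $\tau$-integration step: one must verify that the polynomial conditions on $\tau$ transform cleanly under the rescaling and do not introduce spurious conditions on $(m_{ij})$, so that the separation into an explicit $\tau$-volume times a cone integral in $(m_{ij})$ actually goes through. This is essentially the same bookkeeping as in \cite{Su}, the new feature being the need to track uniformity in the base ring $R$; that uniformity is automatic from the $\mb{Q}$-polynomial nature of all formulas involved, which is precisely why part (2) can be deduced from part (1) by base change.
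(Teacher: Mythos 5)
Your proposal reconstructs, in reasonable detail, the construction that the paper simply cites from \cite[Section 2.2]{Su}: parametrize the admissible $A$ by an upper-triangular matrix for $M=A\cap N^R$ together with a normalized lift $\tau:E\to N^R$, translate the closure and cocycle-compatibility conditions into $\mb{Q}$-polynomial divisibility conditions via the Mal'cev/BCH coordinates, and observe that the resulting cone integral data is independent of the base ring $R$, which is precisely why part (2) follows from part (1) by base change of the same data. This is the same approach the paper takes; the only caveat is that your sketch of the $\tau$-elimination step is heuristic (and the paper itself does not supply these details, deferring entirely to \cite{Su}), but you correctly flag that as the bookkeeping to be checked rather than a new idea.
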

\begin{proof}
A construction of a collection $\mc{D}_E^*$ that satisfies (1) was obtained in \cite[Section 2.2]{Su}.
With the same proof one can show that this collection also satisfies (2). 
\end{proof}
\medskip

\noindent {\em Proof of Theorem \ref{main theorem for virtually nilpotent}:}
 Lemma \ref{algebraic geometry lemma 2} enables us to replace $\mb{C}$ by a number field $K$ in the hypothesis of the theorem without modifying $w$.
We can assume that $G_i=N_i\times_{(\sigma_i,\psi_i)} F_i$, with $(\sigma_i,\psi_i)$ satisfying the cocycle condition (\ref{cocycle conditions}). Let $\tau: F_1\to N_2^K$ be the map of Remark \ref{description of an iso between exact sequences} defined from $(u,v,w)$. 
Let $\{x_1,\ldots,x_h\}$ be a Mal'cev basis for $N_1$ and $\{y_1,\ldots,y_h\}$ a Mal'cev basis for $N_2$.

We denote by $T$ the set of maximal ideals $\p\subset\mc{O}:=\mc{O}_K$ satisfying the following conditions:
\begin{enumerate}
\item ${\mc{O}}_\p$ is $c$-binomial (e.g. $c!\notin \p$) so that $N_1^{{\mc{O}}_\p}$ and $N_2^{{\mc{O}_\p}}$ are ${\mc{O}}_\p$-powered groups;
\item $u(x_i), \tau(f)\in N_2^{{\mc{O}}_\p}$ for all $i=1,\ldots,h$ and $f\in F_1$;
\item $u^{-1}(y_i)\in N_1^{\mc{O}_\p}$ for all $i=1,\ldots,h$;
\end{enumerate}
Note that almost all maximal ideals $\p\subset\mc{O}$ are in $T$. For such a $\p$, conditions (2) and (3) imply that $u$ induces an ${\mc{O}}_\p$-isomorphism $N_1^{\mc{O}_\p}\to N_2^{\mc{O}_\p}$. According to Remark \ref{description of an iso between exact sequences}, $(u,v,w)$ induces an isomorphism $S_1^{{\mc{O}}_\p}\to S_2^{{\mc{O}}_\p}$. 

Fix $E_1\leq F_1$, with $E_1$ normal if $*=\lhd$, and set $E_2:=w(E_1)$. We consider the cone integral data $\mc{D}_{E_i}^*$ of Proposition \ref{cone integral data for virtually nilpotent groups} applied to $S_i$. 
From the above paragraph we deduce that $Z_{\mc{D}_{E_1}^*\otimes K}(s-h-|E_1|+1,\p)=Z_{\mc{D}_{E_2}^*\otimes K}(s-h-|E_2|+1,\p)$ for all almost all maximal ideals $\p\subset\mc{O}$, where $\mc{D}_{E_i}^*\otimes K$ denotes the same collection $\mc{D}_{E_i}^*$ viewed as cone integral data over $K$. Therefore, by Corollary \ref{coro:basic properties of cone integrals}(2), $Z_{\mc{D}_{E_1}^*}(s-h-|E_1|+1)$ and $Z_{\mc{D}_{E_2}^*}(s-h-|E_2|+1)$ have the same abscissa of convergence. Finally, by Proposition \ref{cone integral data for virtually nilpotent groups}, $\zeta_{S_1,E_1}^*(s)$ and $\zeta_{S_2,E_2}^*(s)$ have the same abscissa of convergence. \qed

\bibliography{References}
\bibliographystyle{abbrv}
\end{document}